\newcommand{\FigPath }{}
\newcommand{\dimObs}{d}
\newcommand{\RKHS}{\mathcal{H}}
\newcommand{\Disc}{\mathcal{D}}
\newcommand{\Phimatrix}[1]{\Phi_{#1}}
\newcommand{\norm}{w}
\newcommand{\Orbit}{\mathcal{O}}
\newcommand{\DiscFour}{\mathcal{Z}}
\newcommand{\NGenFreq}{q}
\newcommand{\Fourier}{\mathcal{F}}
\newcommand{\Sinc}{\mathcal{S}}
\newcommand{\real}{\mathbb{R}}
\newcommand{\cmplx}{\mathbb{C}}
\newcommand{\integer}{\mathbb{Z}}
\newcommand{\num}{\mathbb{N}}
\newcommand{\TorusD}[1]{\mathbb{T}^{#1}}
\newcommand{\ratio}{r}
\newcommand{\unitary}{\mathcal{U}}
\DeclareMathOperator{\proj}{proj}
\DeclareMathOperator{\ran}{ran}
\DeclareMathOperator{\rank}{rank}
\DeclareMathOperator{\supp}{supp}
\DeclareMathOperator{\spn}{span}
\DeclareMathOperator{\Id}{Id}
\newcounter{enum_sav}
\newcommand{\blue}{\textcolor{black}}
\newtheorem{theorem}{Theorem}
\newtheorem{proposition}[theorem]{Proposition}
\newtheorem{lemma}[theorem]{Lemma}
\newdefinition{cor}[theorem]{Corollary}
\newdefinition{algorithm}{Algorithm}
\newdefinition{rk}[theorem]{Remark}
\newtheorem{Assumption}{Assumption}
\theoremstyle{remark}
\newtheorem*{rk*}{Remark}
\begin{document}
\begin{frontmatter}

\title{Koopman spectra in reproducing kernel Hilbert spaces}

\author[]{Suddhasattwa Das\corref{mycorrespondingauthor}}
\ead{dass@cims.nyu.edu}
\cortext[mycorrespondingauthor]{Corresponding author}
\author[]{Dimitrios Giannakis}

\address{Courant Institute of Mathematical Sciences, New York University, New York, NY 10012, USA}

\begin{abstract}
Every invertible, measure-preserving dynamical system induces a Koopman operator, which is a linear, unitary evolution operator acting on the $L^2$ space of observables associated with the invariant measure. Koopman eigenfunctions represent the quasiperiodic, or non-mixing, component of the dynamics. The extraction of these eigenfunctions and their associated eigenfrequencies from a given time series is a non-trivial problem when the underlying system has a dense point spectrum, or a continuous spectrum behaving similarly to noise. This paper describes methods for identifying Koopman eigenfrequencies and eigenfunctions from a discretely sampled time series generated by such a system with unknown dynamics. Our main result gives necessary and sufficient conditions for a Fourier function, defined on $N$ states sampled along an orbit of the dynamics, to be extensible to a Koopman eigenfunction on the whole state space, lying in a reproducing kernel Hilbert space (RKHS). In particular, we show that such an extension exists if and only if the RKHS norm of the Fourier function does not diverge as $ N \to \infty $. In that case, the corresponding Fourier frequency is also a Koopman eigenfrequency, modulo a unique translate by a Nyquist frequency interval, and the RKHS extensions of the $N$-sample Fourier functions converge to a Koopman eigenfunction in RKHS norm. For Koopman eigenfunctions in $L^2$ that do not have RKHS representatives, the RKHS extensions of Fourier functions at the corresponding eigenfrequencies are shown to converge in $L^2$ norm. Numerical experiments on mixed-spectrum systems with weak periodic components demonstrate that this approach has significantly higher skill in identifying Koopman eigenfrequencies compared to conventional spectral estimation techniques based on the discrete Fourier transform.
\end{abstract}

\begin{keyword}
Koopman operators \sep spectral estimation\sep reproducing kernel Hilbert spaces \sep ergodic dynamical systems
\MSC[2010] 37A05 \sep 37A10 \sep 37A30 \sep 37A45
\end{keyword}

\end{frontmatter}
\modulolinenumbers[5]
\section{Introduction}\label{sect:intro}

A common scenario in the analysis of data generated by dynamical systems is that the underlying discrete- or continuous-time flow is unknown, and the system is observed through some observation map $F$ taking values in a vector space (the data space). The challenge is then to infer various properties of the system from the time series $\{F(x_0), F(x_1),\ldots\}$, where \{$x_0, x_1,\ldots$\} is an orbit in the system's state space with a fixed sampling interval $ \Delta t $. The focus of this paper is on the identification of eigenfunctions of an operator called the Koopman operator \cite{Koopman31}, which governs the evolution of observables under the dynamics. We will assume throughout that the dynamics is measure-preserving and ergodic.

In this setting, Koopman eigenfunctions in the $L^2$ space associated with the invariant measure form a distinguished class of observables that evolve by multiplication by a time-periodic factor $e^{i\omega t}$, $\omega\in\mathbb{R}$, even if the dynamics is aperiodic. They extract temporally coherent, and thus highly predictable, temporal patterns from complex dynamics, having high physical interpretability by virtue of being associated with an operator intrinsic to the dynamical system generating the data. In addition, observables lying in the span of these eigenfunctions have an integrable time evolution, and are useful for reduced order modeling and forecasting. Due to these properties, Koopman eigenfunctions warrant identification from data. 

There have been many approaches to the identification of Koopman spectra (and the spectra of the related Peron-Frobenius operators, which are duals to Koopman operators), such as methods based on state space partitions \cite{DellnitzJunge99}, harmonic averaging \cite{MezicBanaszuk04,Mezic05}, Krylov subspace iteration \cite{Schmid10,RowleyEtAl09}, dictionary-based approximation \cite{TuEtAl14,WilliamsEtAl15,KutzEtAl16}, Galerkin approximation \cite{GiannakisEtAl2015,Giannakis17}, delay-coordinate embeddings \cite{BruntonEtAl17,GiannakisEtAl2015,Giannakis17,DasGiannakis_delay_Koop}, and spectral moment estimation \cite{KordaEtAl2018}. Among these, the methods based on harmonic averaging are closely related to spectral estimation techniques via the discrete Fourier transform (DFT). Given the time series \{$F(x_0), F(x_1),\ldots$\}, harmonic averaging techniques compute the quantities 
\begin{equation}\label{eqn:FFT}
\Fourier_{\omega,N}F := \frac{1}{N}\sum_{n=0}^{N-1} e^{-i\omega n\, \Delta t} F(x_n) 
\end{equation}
for a set of candidate frequencies $\omega \in \real$. If a frequency $ \omega $ in the candidate set lies in the point spectrum of the Koopman group, then $f$ acquires a discrete spectral component $a_\omega $ at that frequency, and as the sample size $N$ increases, $ \Fourier_{\omega,N}f $ converges to $ a_\omega $. Thus, the quantities in~\eqref{eqn:FFT} can in principle reveal the point spectrum of the dynamics based on empirical time-ordered measurements of observables. 

Despite this useful property, a direct application of harmonic averaging has a number of limitations. For example, signals are often noisy, and even in a deterministic system without noise, if there is a non-empty \emph{continuous spectrum} [see the definitions preceding \eqref{eqn:L2_decomp}], then the signal will be spectrally similar to one generated by a noisy source. In such cases, it may be difficult to distinguish the true discrete spectral components from those due to noise and/or the continuous spectrum. Moreover, the magnitude of the discrete spectral components carried by the signal may rapidly decay with increasing frequency, making the task more difficult, and may even vanish if $F$ is orthogonal to the corresponding Koopman eigenspace. Figure~\ref{fig:l63_skew_additive_FFT} illustrates some of the shortcomings of spectral estimation via~\eqref{eqn:FFT} with an application to a chaotic signal.

\begin{figure}
\centering
\captionsetup{width=\linewidth}\label{subfig:1a}
\includegraphics[width=\textwidth]{\FigPath 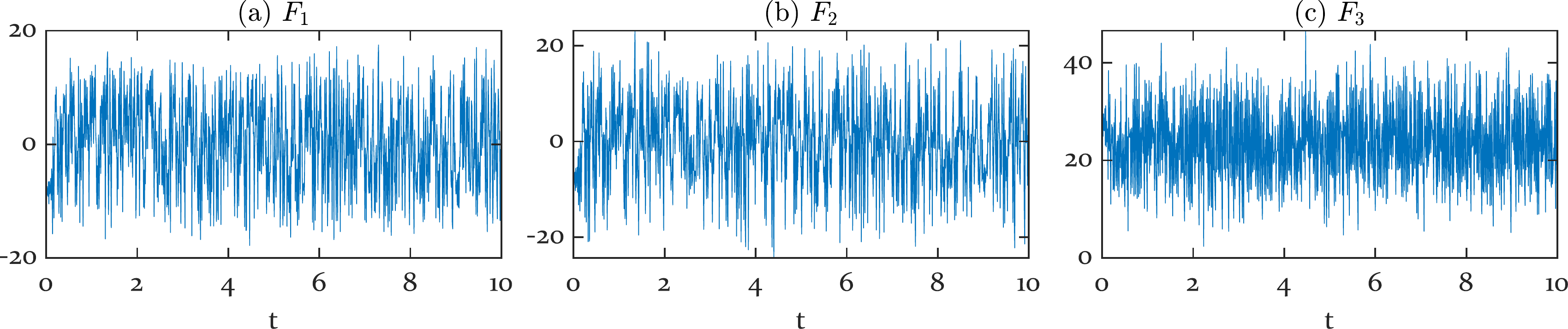}

\includegraphics[width=\textwidth]{\FigPath 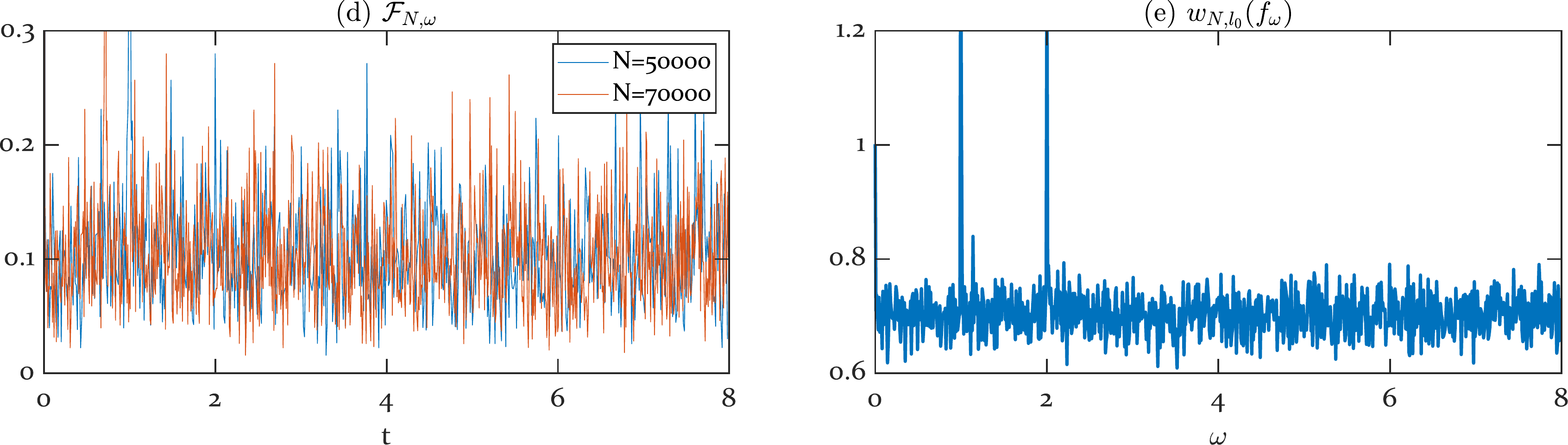}
\caption[results]{ A mixed-spectrum signal (a,b,c) and the results of spectral analysis via the DFT (d) and the RKHS-based approach using a Markov-normalized Gaussian kernel (e). The input signal $F(t) = (F_1(t),F_2(t), F_3(t) ) \in \real^3$ is a superposition of a periodic signal and a chaotic signal, generated by the product dynamical system in \eqref{eqn:l63_skew_additive}, and has both discrete and continuous spectral components. The Koopman eigenfrequencies present in $F(t) $ are 1 and 2. The DFT-based spectral analysis was performed on data sets consisting of $N=\text{50,000}$ and $\text{70,000}$ samples. In either case, it fails to provide a clear demarcation of the true frequencies in the point spectrum of the system. In particular, note that the $\omega = 1 $ peak in the DFT spectrum for $N=\text{50,000}$ disappears at $N=\text{70,000}$. On the other hand, the RKHS-based approach described in this paper correctly identifies the $ \omega = 1 , 2$ eigenfrequencies of the Koopman operator. The method, applied here to the same 70,000-sample dataset as in (d), constructs a data-driven eigenbasis of an RKHS $ \RKHS $ of functions on state space, whose elements can be nonlinear functions of the input data (depending on the kernel employed). Then, for each Fourier function $ f_\omega $, sampled at $N$ time-ordered points along a dynamical orbit, it computes the squared RKHS norm $w_{N,l}(f_\omega) $ of its projection onto the RKHS subspace spanned by the leading $ l $ basis functions (here, $l=1240$). Theorem~\ref{thm:D} shows that as $N\to\infty$, and for sufficiently large fixed $l$, this quantity converges to a nonzero finite number if and only if a translate $\omega + q 2\pi/\Delta t$ with $ q \in \mathbb{Z}$ is a Koopman eigenfrequency, and otherwise converges to 0. }
\label{fig:l63_skew_additive_FFT}
\end{figure} 

In this work, we approach the problem of estimating the point spectra of Koopman operators as an extrapolation problem. Specifically, we seek to extend a candidate eigenfunction from its values on the sample trajectory to the entire space, in a reproducing kernel Hilbert space (RKHS) of functions. Our approach is based on the observation that along an orbit of the dynamics a continuous Koopman eigenfunction of a measure-preserving dynamical system behaves like a Fourier function, evolving as $e^{i\omega t}$ for a real frequency $\omega$. On the other hand, it is not the case that every Fourier function on an orbit extends to a Koopman eigenfunction lying in a space of observables of sufficient regularity. Choosing the class of RKHSs as Hilbert spaces naturally encapsulating a notion of regularity of observables, we will show that the Fourier functions on orbits admitting RKHS extensions to the entire state space are precisely those having RKHS representatives constructed from $N$ samples on the orbit, with convergent squared RKHS norm as $ N \to \infty $. Moreover these extensions will be shown to be Koopman eigenfunctions with eigenfrequencies equal to the corresponding Fourier frequencies, modulo an irreducible ambiguity (aliasing) due to discrete-time sampling. The RKHS framework also allows for stable evaluation of the approximate Koopman eigenfunctions determined from $N $ samples at arbitrary points on state space, in contrast to approximation in $L^2$ spaces which only yields estimates for the eigenfunction values at the sampled states. The action of the Koopman operator on RKHSs was also considered in \cite{Kawahara2016,KlusEtAl2017}, although these studies rely on the strong assumption that the RKHS is invariant under the Koopman group.

In Section \ref{sect:examples}, we will revisit the example shown in Figure~\ref{fig:l63_skew_additive_FFT}, and discuss how the application of our methods using RKHSs associated with covariance kernels makes the analysis related to harmonic averaging. As shown in Figure~\ref{fig:l63_skew_additive_FFT}, the RKHS-based analysis utilizing a Gaussian kernel, which results in an infinite-dimensional RKHS despite the fact that the data measurement function takes values in a finite-dimensional space, $\real^3$, correctly identifies two Koopman eigenfrequencies, which in this case is enough to recover all eigenfrequencies. 

\section{Assumptions and statement of the main results}\label{sec:intro}

As previously stated, we are interested in the spectral analysis of continuous-time, measure-preserving, ergodic flows, and the basic assumptions on the system are stated below. 

\begin{Assumption}\label{assump:A1}
$\Phi^t:M\to M$, $ t \in \real $, is a continuous flow on a metric space $M$, possessing an invariant, ergodic Borel probability measure $\mu$, whose support $\supp \mu$ is a compact set $X$, which is not a fixed point of the dynamics. The system is sampled at a fixed interval $ \Delta t > 0 $, such that $ \mu $ is an ergodic measure for the discrete-time evolution map $ \Phi^{n\,\Delta t} : M \to M $, $ n \in \integer $.
\end{Assumption}

Given an initial point $x_0 \in M$, we let $\Orbit$ denote the orbit $ \{x_n=\Phi^{n\Delta t}(x_0) : n \in \integer\} \subset M$, and $X_N $ the finite trajectory $\{x_0,\ldots,x_{N-1}\} \subset \mathcal{O}$. As will be shown in Lemma~\ref{lem:ergdc_samplng} below, by the restriction on the sampling interval $\Delta t$ in Assumption~\ref{assump:A1}, $\mathcal{O}$ is a dense subset of $X$ for $\mu$-a.e.\ starting point $x_0$. This, in conjunction with the fact that $\supp \mu$ is not a fixed point, implies that $\mathcal{O}$ is an infinite set $\mu$-a.s., only containing distinct points. The latter will be implicitly assumed henceforth.


\paragraph{The Koopman operator} Under Assumption \ref{assump:A1}, a natural space of observables of the dynamical system is the Hilbert space $L^2(\mu)$ of equivalence classes of complex-valued functions on $X$, square-integrable with respect to $\mu$. This space is a Hilbert space, equipped with the inner product $\langle f, g\rangle_{\mu} = \int_X f^* g \, d\mu$. The dynamical flow $\Phi^t$ induces a linear map $U^t$ on the vector space of complex-valued functions on $M$ (and $X$), defined as $ U^t f = f\circ \Phi^t $. $U^t$ is called the Koopman operator at time $t$, and it acts on observables by composition with the flow map. Its action induces a norm-preserving operator on the Banach space of continuous, complex-valued functions on functions on $X$, denoted $C^0(X)$. Furthermore, this action extends to the spaces $L^p(\mu)$ for every $p\geq 1$. In fact $U^t : L^2(\mu) \to L^2(\mu) $, $ t \in \real $, is a strongly continuous, 1-parameter group of unitary operators \cite{EisnerEtAl15}. The unitarity of $U^t$ stems from the fact that $\Phi^t$ is an invertible, $\mu$-preserving map, and it implies that all of its eigenvalues lie on the unit circle of the complex plane. In addition, it follows from the continuity of the map $ t \mapsto \Phi^t$ that every eigenvalue of $U^t $ has the form $e^{i\omega t}$, where $\omega$ is a real eigenfrequency. As a result, an eigenfunction $z \in L^2(\mu)$ corresponding to that eigenvalue satisfies the equation 
\begin{equation}\label{eqn:Def_koop_eigen}
U^tz=e^{i\omega t}z.
\end{equation}
In fact, it follows from Stone's theorem for strongly continuous unitary groups \cite{Stone1932} that $ i \omega $ is an eigenvalue of the generator $ V : D(V) \to L^2(\mu) $ of the Koopman group; a skew-adjoint unbounded operator with a dense domain $D(V) \subseteq L^2(\mu)$, acting on observables as a ``time derivative'', viz.\ $ V f = \lim_{t\to0} ( U^t f - f ) / t $. By ergodicity of the flow, and under Assumption~\ref{assump:A1}, all eigenvalues of $ V $ and $U^{\Delta t}$ are simple. 

\paragraph{An extrapolation problem} For each frequency $\omega \in \real$, we define the function 
\begin{equation}\label{eqn:def:f_omega}
f_{\omega}:\Orbit\to\cmplx, \quad f_{\omega}(x_n) = e^{in\omega\,\Delta t}, 
\end{equation}
and seek to determine whether $f_\omega$ extends to a function $ \bar f_\omega : X \to \cmplx$ of appropriate regularity. Observe, in particular, that if $\omega$ were a Koopman eigenfrequency corresponding to a continuous eigenfunction $z_\omega$ such that $z_\omega(x_0)=1$, then by \eqref{eqn:Def_koop_eigen}, the restriction $z_\omega|\Orbit$ to the orbit coincides with $f_{\omega}$ in \eqref{eqn:def:f_omega}, and thus with $f_\omega| X_N$ (i.e., the restriction of $ f_\omega $ on the finite trajectory $X_N$). Here, we are interested in this question in the reverse direction, i.e., our objective is to identify for which $\omega \in \real$ the Fourier function $f_{\omega} $ can be extended to a Koopman eigenfunction, satisfying~\eqref{eqn:Def_koop_eigen} at eigenfrequency $ \omega$. This is essentially an extrapolation problem of a candidate function from a countable set $\Orbit$ to the entire space $X$. In addition, we are interested in performing this extrapolation in a data-driven manner; that is, from measurements of the system taken on the finite trajectory $X_N$ without prior knowledge of the dynamics or the structure of state space.

\paragraph{Aliasing} As with any empirical signal processing technique operating in a discrete-time sampling environment, our ability to estimate frequencies is limited by Nyquist sampling criteria. Observe, in particular, that $f_\omega$ and $f_{\omega'}$ from~\eqref{eqn:def:f_omega} are identical whenever $\omega$ and $\omega'$ differ by an integer multiple of $2\pi/{\Delta t}$. This leads to the well known fact that the distinct Fourier functions on the orbit $\mathcal{O}$ can be parameterized by frequencies $\omega$ in the interval $ [ - \pi / \Delta t, \pi / \Delta t )$. This degeneracy will have implications in the results stated in Theorems~\ref{thm:A} and~\ref{thm:D} below. For example, it will preclude us from uniquely inferring eigenfrequencies of the generator $V$ from eigenvalues of the discrete-time Koopman operator $U^{\Delta t}$. 

\paragraph{Reproducing kernel Hilbert spaces} While natural from a theoretical point of view, the Banach space of continuous functions on $X$ is arguably not well-suited for meeting the objectives set forth above, for it lacks the inner product structure that significantly facilitates the implementation of data-driven techniques. Instead, following the widely adopted paradigm in statistical learning theory \cite{CuckerSmale01}, we will focus on identification of Koopman eigenfunctions lying in an RKHS, which, in addition to the pointwise approximation guarantees provided by the uniform norm on $C^0(X)$, its Hilbert space structure allows the construction of data-driven algorithms based on standard linear algebra tools. Throughout this work, we will restrict attention to RKHSs with continuous reproducing kernels, so that convergence in RKHS norm implies convergence in $C^0(X)$ norm. Before proceeding, we briefly review some of the main properties of RKHSs. These concepts will be discussed in more detail in Section~\ref{sect:theory_rkhs}.

An RKHS $\RKHS$ on $M$ is a Hilbert subspace of the linear space of complex-valued functions $f:M\to\cmplx$, equipped with an inner product $ \langle \cdot, \cdot \rangle_{\RKHS}$, such that for every $x\in M$, the point-evaluation map $\delta_x:\RKHS\to\cmplx$, $\delta_x f = f( x )$, is a bounded, and thus continuous, linear functional. The Hilbert space structure of $\RKHS$ allows operations such as orthogonal projections, needed for many numerical procedures. Moreover, because $\delta_x $ is bounded for every $x\in M$, convergence in $\RKHS$ norm implies pointwise convergence on $M$ (in fact, under the assumptions made below, this convergence is uniform on compact sets, including $X$). Thus, RKHSs combine useful properties of both $L^2(\mu)$ (Hilbert space structure) and $C^0(X)$ (pointwise evaluation by bounded functionals).

By the Moore-Aronszajn theorem \cite{Aronszajn1950}, every RKHS is uniquely determined through its reproducing kernel; a bivariate function $k : M \times M \to \cmplx$ with the following properties:
\begin{enumerate}[(i)]
\item $ k $ is conjugate symmetric, i.e., $k(x,y) = k(y,x)^*$ for all $x,y \in M$.
\item $ k$ is positive-definite, i.e., for every $x_1,\ldots,x_n\in M$ and $a_1,\ldots,a_n\in\cmplx$, $\sum_{i,j=1}^{n} a^*_i a_j k(x_i,x_j)\geq 0$.
\item For every $x\in M$, the kernel sections $k(x,\cdot)$ lie in $\RKHS$.
\item The reproducing property $ f( x ) = \delta_x f = \langle k(x,\cdot), f \rangle_{\RKHS} $ holds for every $f\in \RKHS$ and $ x \in M $. 
\end{enumerate}
These properties imply that $\langle k( x, \cdot ), k( y, \cdot ) \rangle_{\RKHS} = k(x,y) $ for all $x,y \in M$, and $\RKHS$ is the closure (in the $\RKHS$ norm) of finite sums $f=\sum_{i=1}^n a_i k(x_i,\cdot)$ with $ a_1, \ldots, a_n \in \cmplx $ and $ x_1, \ldots, x_n \in M$. The kernel $k$ is said to be \emph{strictly} positive-definite if the inequality in property (ii) above is strict whenever the points $ x_1, \ldots, x_n $ are distinct, and at least one of the coefficients $a_1, \ldots, a_n $ is nonzero. 

Let $\RKHS(X)$ denote the RKHS on $X$ with $ k| X\times X $ as its reproducing kernel, and note that this space embeds naturally and isometrically into $\RKHS$, so we may view it as a closed subspace of the latter space. If $ k| X \times X $ is strictly positive-definite and continuous, then $\RKHS(X)$ is a dense subspace of $C^0(X)$ (a property often referred to as universality \cite{MicchelliEtAl2006}), and convergence in $\RKHS(X)$ norm implies convergence in $C^0(X)$ norm. If $k$ is continuous on the whole of $M\times M$, then $\RKHS$ is a subspace of $C^0(M)$, the space of continuous functions on $M$ \cite{FerreiraMenegatto2013}. In fact, if $k $ is a smooth kernel on a compact manifold, $\RKHS$ is a subset of the $C^r$ spaces on the manifold for all $r \geq 0 $. For our purposes, RKHSs have the key property that they allow \emph{out-of-sample extensions} of $L^2(\nu) $ equivalence classes of functions with respect to any finite, compactly supported Borel measure $\nu$ on $M$ to everywhere-defined functions in $\RKHS$. 
In light of the above, we will require that the following conditions on kernels be satisfied:
\begin{Assumption}\label{assump:A2}
$k : M \times M \to \real$ is a reproducing kernel for an RKHS $ \RKHS $. Moreover, the restricted kernel $k|X\times X$ is (i) continuous; and (ii) strictly positive-definite. 
\end{Assumption}

Consider now the finite trajectory $X_N$ and the probability measure $\mu_{N} = \sum_{n=0}^{N-1} \delta_{x_n}/N$, where $\delta_{x_n}$ is the Dirac delta-measure supported on the point $x_n$. $\mu_{N}$ is called the \emph{sampling measure} supported on the finite trajectory $X_N \subseteq \mathcal{O} $. By ergodicity, for $\mu$-a.e.\ $x_0 \in M$, as $N\to\infty$, $\mu_N$ converges weakly to the invariant measure $\mu$, i.e., for every continuous function $f:M\to\cmplx$, $\lim_{N\to\infty} \int_M f\, d\mu_N= \int_M f\, d\mu$. As a data-driven analog of $L^2(\mu)$, we employ the Hilbert space $L^2(\mu_N) $, consisting of equivalence classes of functions on $X$ taking the same values on the finite set $X_N$, while taking arbitrary values on $M\setminus X_N$. Because all points in the orbit $\mathcal{O}$ are distinct (see Assumption~\ref{assump:A1}), $L^2(\mu_N)$ is an $N$-dimensional Hilbert space isomorphic to $\cmplx^N$, equipped with a normalized Euclidean inner product, $\vec f \cdot \vec g / N$. It is also isomorphic to the Hilbert space of complex-valued functions on $X_N$, equipped with the normalized Euclidean product. In what follows, we will identify elements of the latter space with elements of $L^2(\mu_N)$ without the use of additional notation.

Next, as a data-driven analog of $\RKHS$, we consider the $N$-dimensional subspace $\RKHS_N \subset \RKHS $, defined as the linear span of the set of kernel sections $\{k({x_0},\cdot), \ldots, k(x_{N-1},\cdot) \} $ on $X_N$. We will show in Section~\ref{sect:theory_rkhs} that the union of the spaces $\RKHS_N | X$ is dense in $\RKHS(X)$, using the fact that $\cup_{N\in\num}X_N = \Orbit$ and Assumption~\ref{assump:A2}(i) . 

\paragraph{Fitting RKHS functions} As will be discussed in more detail in Section~\ref{sect:theory_rkhs}, under Assumption~\ref{assump:A2}(ii), there exists an extension operator $ T_N : L^2(\mu_N) \to \RKHS $, mapping the equivalence class $ f \in L^2(\mu_N) $ to a pointwise-defined function $ h = T_N f \in \RKHS$, such that $f(x_n)= h( x_n ) $ for all $ x_n \in X_N $. Moreover, $ h|X $ is the unique element of $\RKHS(X)$ with this property. In fact, $h$ lies in $ \RKHS_N$, i.e., it is equal to a linear combination of $k({x_0},\cdot),\ldots, k(x_{N-1},\cdot)$. Using this operator, we define the non-negative functional
\begin{equation}\label{eqn:norm}
\norm_N : L^2(\mu_N) \to \real, \quad \norm_N (f) = \left\|T_Nf\right\|_{\RKHS}^2.
\end{equation}
The functional $w_N$ induces a norm on equivalence classes of functions with respect to $ \mu_N $, distinct from the $L^2(\mu_N)$ norm. Intuitively, the ratio 
\begin{equation}\label{eqRKHSRat}
r_N( f ) = \norm_N( f )/ \lVert f \rVert_{\mu_N}
\end{equation}
can be thought of as a measure of ``roughness'' of $f$ analogous to a Dirichlet energy; that is, the larger that quantity is, the stronger the degree of spatial variability of its RKHS extension $ T_N f $ becomes. If we have a function $f:\mathcal{O}\to \cmplx$ defined on the entire orbit (e.g., $f_\omega$ from~\eqref{eqn:def:f_omega}), then for brevity of notation, we will abbreviate $w_N(f|X_N)$ and $ \lVert f | X_N \rVert_{\mu_N}$ by $w_N(f)$ and $\lVert f \rVert_{\mu_N}$, respectively. Our main result below establishes a necessary and sufficient condition in terms of $\norm_{N}(f_{\omega})$ for $\omega$ to be a Koopman eigenfrequency corresponding to a Koopman eigenfunction in $\RKHS$, modulo a unique translate by a Nyquist frequency interval.

\begin{theorem}\label{thm:A}
\blue{Under Assumptions~\ref{assump:A1} and~\ref{assump:A2} the following holds for $\mu$-a.e.\ $x_0\in M$}: For every $\omega \in \real$, let $f_{\omega}|X_N$ be a Fourier function on the finite trajectory $ X_N $ as in \eqref{eqn:def:f_omega}. Then, 
\begin{enumerate}[(i)]
\item $\lim_{N\to\infty}\norm_N (f_\omega)=\infty$ iff $f_{\omega}$ does not have an extension $\bar{f}_{\omega}\in\RKHS$.
\setcounter{enum_sav}{\value{enumi}}
\end{enumerate}
Moreover, if $\bar{f}_{\omega}$ exists: 
\begin{enumerate}[(i)]
\setcounter{enumi}{\value{enum_sav}}
\item $\lim_{N\to\infty}\norm_N (f_\omega) = \|\bar{f}_{\omega}\|_{\RKHS}^2$. 
\item $\bar{f}_{\omega}$ is an eigenfunction of the Koopman operator $U^{\Delta t}$ and the generator $V$, with corresponding eigenvalue and eigenfrequency $e^{i\omega\Delta t}$ and $\omega + 2\pi q /\Delta t$, respectively, for a unique $q\in\integer$.
\end{enumerate}
\end{theorem}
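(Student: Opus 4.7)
The proof strategy hinges on two structural features of $T_N f_\omega$: it is the minimum-$\RKHS$-norm interpolant of $f_\omega$ on $X_N$, and the sampling sets are nested, $X_N\subset X_{N+1}$. First I would establish that $w_N(f_\omega)$ is nondecreasing in $N$. Since $T_N f_\omega$ already interpolates $f_\omega$ on every $X_M$ with $M\le N$, and the minimum-norm interpolant on $X_M$ is obtained by orthogonal projection $P_M$ onto $\RKHS_M$ of any element of $\RKHS$ that interpolates $f_\omega$ on $X_M$ (here using $\RKHS_M\subset\RKHS_N$ and the reproducing property to verify that $P_M T_N f_\omega$ interpolates $f_\omega$ on $X_M$), we have $T_M f_\omega = P_M T_N f_\omega$. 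Pythagoras then gives
\[
\|T_N f_\omega - T_M f_\omega\|_{\RKHS}^2 = \|T_N f_\omega\|_{\RKHS}^2 - \|T_M f_\omega\|_{\RKHS}^2,
\]
so $w_N(f_\omega)$ is nondecreasing with limit $L\in[0,\infty]$, and $\{T_N f_\omega\}$ is Cauchy in $\RKHS$ precisely when $L<\infty$.

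For parts (i) and (ii), I would first treat the case $L<\infty$. The Cauchy criterion gives $T_N f_\omega\to g$ in $\RKHS$ norm, hence pointwise on $M$ because point-evaluation functionals are bounded. For any fixed $m$, $(T_N f_\omega)(x_m) = f_\omega(x_m)$ once $N>m$, so $g(x_m)=f_\omega(x_m)$ for every $x_m\in\Orbit$, exhibiting $g\in\RKHS$ as an extension of $f_\omega$. For the converse, suppose $\bar f_\omega\in\RKHS$ extends $f_\omega$. Replacing it by its orthogonal projection onto the closed subspace $\RKHS(X)\subset\RKHS$ preserves the values on $X\supset\Orbit$, so without loss of generality $\bar f_\omega\in\RKHS(X)$ is the minimum-norm extension. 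Then $T_N f_\omega = P_N\bar f_\omega$, and the density of $\cup_N \RKHS_N|X$ in $\RKHS(X)$ (promised in Section~\ref{sect:theory_rkhs}) yields $P_N\to I$ strongly on $\RKHS(X)$, so $w_N(f_\omega)\to\|\bar f_\omega\|_{\RKHS}^2<\infty$. Taken together, these two directions give the dichotomy of (i) and the limit in (ii).

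For (iii), the restriction $\bar f_\omega|X\in\RKHS(X)$ is continuous on $X$ by Assumption~\ref{assump:A2}(i). On the orbit, $\bar f_\omega(x_{n+1}) = f_\omega(x_{n+1}) = e^{i\omega\Delta t}\bar f_\omega(x_n)$, so $U^{\Delta t}\bar f_\omega$ and $e^{i\omega\Delta t}\bar f_\omega$ are continuous functions on $X$ agreeing on the dense set $\Orbit$ (density from Lemma~\ref{lem:ergdc_samplng}); they therefore coincide on $X$ and, in particular, as elements of $L^2(\mu)$. Thus $\bar f_\omega$ is an $L^2(\mu)$ eigenfunction of $U^{\Delta t}$ at eigenvalue $e^{i\omega\Delta t}$. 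Ergodicity ensures this eigenspace is one-dimensional; since $U^t$ commutes with $U^{\Delta t}$, it preserves the eigenspace and acts on it by a continuous unitary character $e^{i\omega' t}$, whose infinitesimal generator is $i\omega'$ by Stone's theorem. The constraint $e^{i\omega'\Delta t}=e^{i\omega\Delta t}$ pins down $\omega'=\omega+2\pi q/\Delta t$ for a unique $q\in\integer$.

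The main obstacle I anticipate is the density statement $\overline{\cup_N \RKHS_N|X}=\RKHS(X)$, which is the engine identifying the limit of $w_N$ with the squared norm of the extension; it is the place where the ergodic density of $\Orbit$ in $X$ and the regularity built into Assumption~\ref{assump:A2} are really needed. A secondary subtlety is the projection step in the converse of (i), which is required to reduce an arbitrary extension in $\RKHS$ to the minimum-norm one in $\RKHS(X)$ so that Pythagoras applies cleanly.
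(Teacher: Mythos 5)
Your proof is correct, and while it rests on the same two pillars as the paper's argument---the identity $T_N(f|X_N)=\pi_N f$, with $\pi_N$ the orthogonal projection onto $\RKHS_N$, and the density of $\bigcup_N \RKHS_N$ in $\RKHS(X)$ (the paper's Lemma~\ref{lem:RKHS_limit}, which you invoke but do not prove; the missing argument is short: an element of $\RKHS(X)$ orthogonal to every $k(x_n,\cdot)$ vanishes on the dense orbit and is continuous, hence zero)---your treatment of the existence direction is genuinely different. The paper routes Theorem~\ref{thm:A} through Theorem~\ref{thm:B}, whose part~(ii) is proved by weak compactness: the bounded sequence $T_N(f_\omega|X_N)$ has a weakly convergent subsequence whose limit is shown to interpolate $f_\omega$ on $\Orbit$ (Proposition~\ref{prop:D}). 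You instead exploit the nestedness $X_M\subset X_N$ to get $T_M f_\omega=P_M T_N f_\omega$ (via self-adjointness of $P_M$, $k(x_m,\cdot)\in\RKHS_M$, and uniqueness of the interpolant in $\RKHS_M$, which uses strict positive-definiteness and distinctness of the sampled states), whence Pythagoras gives monotonicity of $\norm_N(f_\omega)$ and $\lVert T_Nf_\omega-T_Mf_\omega\rVert_\RKHS^2=\norm_N(f_\omega)-\norm_M(f_\omega)$; boundedness then yields \emph{strong} convergence, not merely a weakly convergent subsequence, which is sharper and in fact proves the assertion the paper only states in the remark following Theorem~\ref{thm:A}. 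Your identification of the limit with $\lVert\bar f_\omega\rVert_\RKHS^2$ is the same mechanism as the paper's Theorem~\ref{thm:B}(i) (projection onto $\RKHS_N$ plus density), verified through interpolation uniqueness rather than the paper's idempotency computation for $T_NP_N\iota$, and your explicit reduction to the minimum-norm extension in $\RKHS(X)$ is slightly more careful than the paper on the point that $\RKHS$-extensions off $X$ need not be unique. For part~(iii), the $U^{\Delta t}$-eigenfunction claim is argued exactly as in the paper (two continuous functions agreeing on a dense orbit); for the generator, you replace the paper's appeal to the spectral-measure argument of Lemma~\ref{lem:ergdc_samplng} by simplicity of the $U^{\Delta t}$-eigenvalue (which indeed requires ergodicity of the sampled map $\Phi^{\Delta t}$, guaranteed by Assumption~\ref{assump:A1}) together with the observation that the commuting group $U^t$ acts on the one-dimensional eigenspace by a continuous unitary character---an equivalent, self-contained route.
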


\begin{rk*} The elements of the sequence $\norm_N (f_\omega)$ in Theorem \ref{thm:A} are the squared norms of the vectors $T_N(f_\omega|X_N)$ lying in $\RKHS$. For a general sequence of vectors in a Hilbert space with bounded norm, the norms may not converge, and even if they did, the vectors themselves may not be convergent. For the vectors $ f_\omega|X_N$, however, the boundedness of their norm is in fact equivalent to them forming a convergent sequence.
\end{rk*}

Theorem \ref{thm:A} gives necessary and sufficient conditions under which a Fourier function sampled on a countable set can be extended to a Koopman eigenfunction in an RKHS. In other words, it provides a characterization of the following set of frequencies:
\begin{equation}\label{eqn:def:Omega}
\Omega := \{ \omega\in\real : \text{$\omega$ is a Koopman eigenfrequency corresponding to a Koopman eigenfunction in $\RKHS$} \}.
\end{equation}
Theorem \ref{thm:A} establishes a connection between the spectral properties of the dynamics, determined solely by the Koopman group $ \{ U^t \} $, and the RKHS, determined solely by the kernel $k$. In effect, because $ \lVert f_\omega \rVert_{\mu_N} = 1 $ for every frequency $ \omega \in \real $ and all $ N \in \num $, this connection is realized through the limiting behavior of the RKHS-induced roughness measure from~\eqref{eqRKHSRat} of Fourier functions on dynamical trajectories of increasing size. That is, Koopman eigenfunctions in $L^2(\mu)$ with representatives in $\RKHS$ can be characterized as $\RKHS$-extensions of Fourier functions $f_\omega | X_N $ in $L^2(\mu_N)$ with non-divergent roughness $r_N(f_\omega)$ as $N \to \infty$. 

It is important to note that the result in Theorem~\ref{thm:A}(iii) does not provide provide direct knowledge of the Koopman eigenfrequency underlying a Fourier function $ f_\omega $, but establishes that there is a unique translate of the form $ \omega + 2 \pi q / \Delta t $, $ q \in \integer $, which \emph{is} an eigenfrequency. As stated above, this ambiguity stems from the fact that any two frequencies $ \omega$ and $ \omega' $ that differ by an integer multiple of $ 2\pi/\Delta t $ lead to the same Fourier unction $ f_\omega $ on the orbit $\mathcal{O}$ associated with the sampling interval $ \Delta t $. In a practical scenario, this type of ambiguity can be resolved if one has access to an additional sampling of the dynamical system, taken at a rationally independent interval from $ \Delta t $. Specifically, we have:

\begin{cor}\label{corFreq}
Let $ \Delta t'>0 $ be a sampling interval such that $ \mu $ is an ergodic measure for the discrete-time map $ \Phi^{n\,\Delta t'} : M \to M $, and $\Delta t/ \Delta t' $ is an irrational number. Let also $\mathcal{O}' = \{ \Phi^{n \, \Delta t'}(x'_0) : n \in \integer \} $ be the discrete-time orbit at the sampling interval $ \Delta t'$, starting from a point $x'_0 \in M $. Then, every frequency $\omega \in \real$ such that the corresponding Fourier functions on $\mathcal{O}$ and $\mathcal{O}' $ simultaneously have $\mathcal{H}$ extensions is a Koopman eigenfrequency.
\end{cor}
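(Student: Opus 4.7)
The plan is to invoke Theorem~\ref{thm:A}(iii) twice, once for each of the two sampling intervals, and then exploit simplicity of the Koopman spectrum. I read the hypothesis as furnishing a single element $\bar f_\omega\in\RKHS$ that extends the Fourier function $f_\omega$ simultaneously on $\mathcal{O}$ and on $\mathcal{O}'$ (rather than two unrelated extensions, one per orbit); this is the reading that makes the statement sharp and cleanly compatible with Theorem~\ref{thm:A}(iii), since mere existence of two separate RKHS extensions leaves room for the two candidate eigenfunctions to live in different eigenspaces of $V$, from which $\omega$ itself need not be an eigenfrequency.

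First I would apply Theorem~\ref{thm:A}(iii) with sampling interval $\Delta t$ and orbit $\mathcal{O}$, obtaining a unique $q\in\integer$ such that $\bar f_\omega$ is an eigenfunction of the generator $V$ with eigenfrequency $\omega + 2\pi q/\Delta t$. Since Assumptions~\ref{assump:A1} and~\ref{assump:A2} remain in force with $\Delta t'$ in place of $\Delta t$ under the hypotheses of the corollary (ergodicity of $\mu$ under $\Phi^{n\,\Delta t'}$ is given, and the kernel $k$ is unchanged), I would reapply Theorem~\ref{thm:A}(iii) to the orbit $\mathcal{O}'$ with the same $\bar f_\omega$, obtaining a unique $q'\in\integer$ for which $\bar f_\omega$ is an eigenfunction of $V$ with eigenfrequency $\omega + 2\pi q'/\Delta t'$.

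To close, I would use that $\bar f_\omega$ is nonzero (its value at $x_0$ is $1$) and that all eigenvalues of $V$ are simple under Assumption~\ref{assump:A1}, so the eigenfrequency of $\bar f_\omega$ is uniquely determined. This forces
\[
\omega + 2\pi q/\Delta t \;=\; \omega + 2\pi q'/\Delta t',
\]
i.e., $q\,\Delta t' = q'\,\Delta t$; if either integer were nonzero, the other would have to be too, and then $\Delta t/\Delta t' = q/q'$ would be rational, contradicting the hypothesis. Hence $q=q'=0$ and $\omega$ itself is the eigenfrequency of $\bar f_\omega$, making $\omega$ a Koopman eigenfrequency. There is no serious technical obstacle beyond this number-theoretic endgame and the interpretive point about ``simultaneously''; the proof is essentially a two-fold application of Theorem~\ref{thm:A}(iii) closed off by irrationality.
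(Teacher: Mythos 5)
Your proof is correct, and at the structural level it is the same argument the paper intends: the paper dispatches the corollary in a one-sentence remark, asserting that (via Theorem~\ref{thm:A}(iii) applied at each sampling interval, i.e., the spectral correspondence underlying Lemma~\ref{lem:ergdc_samplng}) the fact that $e^{i\omega\,\Delta t}$ and $e^{i\omega\,\Delta t'}$ are eigenvalues of $U^{\Delta t}$ and $U^{\Delta t'}$ forces $\omega$ to be an eigenfrequency of $V$. The genuine difference is where you put the weight. The paper's remark argues purely at the level of eigenvalues of the two discrete-time operators, which corresponds to the weak reading of ``simultaneously'' (each Fourier function has some $\RKHS$ extension of its own); as you point out, under that reading the two extensions are a priori distinct eigenfunctions of $V$, at possibly different translates $\omega+2\pi q/\Delta t$ and $\omega+2\pi q'/\Delta t'$, and then only those translates---not $\omega$ itself---are guaranteed to be eigenfrequencies. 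This is not a vacuous worry: for the rotation on $\TorusD{2}$ with basic frequencies $1$ and $1+(\sqrt{2}-2)\pi$, sampled at $\Delta t=1$ and $\Delta t'=\sqrt{2}$ (both time-$\Delta t$ maps are ergodic), the frequency $\omega=1-2\pi$ is not an eigenfrequency, yet $\omega+2\pi/\Delta t=1$ and $\omega+2\pi/\Delta t'=1+(\sqrt{2}-2)\pi$ both are; if the kernel's RKHS contains the corresponding characters, the Fourier functions at frequency $\omega$ on $\mathcal{O}$ and on $\mathcal{O}'$ then extend separately, so the weak reading would make the statement fail. Your reading---a single $\bar f_\omega\in\RKHS$ extending the Fourier function on both orbits---is exactly what closes this gap: a fixed nonzero function can be an eigenfunction of $V$ for only one eigenvalue, so the two translates coincide, and irrationality of $\Delta t/\Delta t'$ forces $q=q'=0$; your number-theoretic endgame is fine. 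Two small points: you do not actually need simplicity of the spectrum of $V$ for this step (a single nonzero vector determines its eigenvalue for any linear operator; simplicity concerns multiplicity of eigenspaces), and since Theorem~\ref{thm:A} is a $\mu$-a.e.\ statement you should require $x_0$ and $x'_0$ to be generic starting points, so that both orbits are dense in $X$, as the corollary implicitly does.
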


\begin{rk*} Corollary~\ref{corFreq} is a direct consequence of Lemma~\ref{lem:ergdc_samplng}(iii) below, which implies that $ e^{i\omega \, \Delta t}$ and $ e^{i\omega \, \Delta t'}$ are eigenvalues of $U^{\Delta t}$ and $U^{\Delta t'}$, respectively, iff $\omega$ is an eigenfrequency of the generator $V$. Here, our main interest is in spectral estimation for the Koopman group from individual dynamical trajectories, so we will not pursue a numerical application of the corollary. However, one can certainly envision experimental scenarios where it is possible to control the sampling interval, and in such scenarios Corollary~\ref{corFreq} provides an anti-aliasing tool to resolve Koopman eigenfrequencies of a continuous-time system from discretely sampled time series.
\end{rk*}

Theorem \ref{thm:A} is in fact a consequence of Theorem \ref{thm:B} below, which is a general RKHS result that only depends on the dynamical orbit $\mathcal{O}$ lying dense in $X$. 
\begin{theorem}\label{thm:B}
Let $k:M \times M \to \real$ be a kernel satisfying Assumption \ref{assump:A2}, $\RKHS$ the corresponding RKHS, and $\Orbit\subset X$ a dense, countable set containing distinct points. Denoting the first $N$ points of $\mathcal{O}$ by $X_N$, the following hold: 
\begin{enumerate}[(i)] 
\item For every $f\in \RKHS(X)$, $\lim_{N\to\infty}\|T_N( f|X_N ) \|_{\RKHS}$ = $\|f\|_{\RKHS}$.
\item If $f:\Orbit \to\cmplx$ is such that $\|T_N\left( f|X_N \right) \|_{\RKHS}$ does not diverge as $N\to\infty$, then $f$ has a unique extension in $\RKHS(X)$.
\end{enumerate}
\end{theorem}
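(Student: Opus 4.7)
The plan is to reduce both parts of Theorem~\ref{thm:B} to a single observation: for every $g\in\RKHS$, the RKHS extension $T_N(g|X_N)$ coincides with the orthogonal projection $P_N g$ of $g$ onto the $N$-dimensional subspace $\RKHS_N=\spn\{k(x_0,\cdot),\ldots,k(x_{N-1},\cdot)\}$. Indeed, using the reproducing property and self-adjointness of $P_N$, for each $x_n\in X_N$,
\[
(P_N g)(x_n)=\langle k(x_n,\cdot),P_N g\rangle_{\RKHS}=\langle P_N k(x_n,\cdot),g\rangle_{\RKHS}=\langle k(x_n,\cdot),g\rangle_{\RKHS}=g(x_n),
\]
since $k(x_n,\cdot)\in\RKHS_N$ is fixed by $P_N$. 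Assumption~\ref{assump:A2}(ii) guarantees that the kernel sections at the distinct points of $X_N$ are linearly independent, so the interpolation problem in $\RKHS_N$ has a unique solution, and therefore $P_N g=T_N(g|X_N)$.

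For part (i), the subspaces $\RKHS_N$ are nested ($X_N\subset X_{N+1}$), and their union is dense in $\RKHS(X)$ (as noted after the definition of $\RKHS_N$ in the paper; this uses density of $\Orbit$ in $X$ together with Assumption~\ref{assump:A2}). Hence for any $f\in\RKHS(X)$, $P_N f\to f$ in $\RKHS$ norm, and by continuity of the norm, $\|T_N(f|X_N)\|_{\RKHS}\to\|f\|_{\RKHS}$.

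For part (ii), set $h_N:=T_N(f|X_N)\in\RKHS_N$. For $M\ge N$, $h_M\in\RKHS_M$ interpolates $f$ on $X_M\supset X_N$, so applying the identification above with $g=h_M$ gives $P_N h_M=h_N$. The Pythagorean identity then yields
\[
\|h_M\|_{\RKHS}^2=\|h_N\|_{\RKHS}^2+\|h_M-h_N\|_{\RKHS}^2,
\]
so the sequence $\|h_N\|_{\RKHS}^2$ is non-decreasing in $N$. By hypothesis it is bounded, hence convergent, and the identity forces $\|h_M-h_N\|_{\RKHS}\to 0$ as $M,N\to\infty$. Thus $\{h_N\}$ is Cauchy in $\RKHS$ and converges to some $\bar{f}$, which lies in the closed subspace $\RKHS(X)\subset\RKHS$. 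For each fixed $x_n\in\Orbit$, continuity of point evaluation gives $\bar{f}(x_n)=\lim_{M\to\infty}h_M(x_n)=f(x_n)$, so $\bar{f}$ extends $f$. Uniqueness follows because two such extensions in $\RKHS(X)\subset C^0(X)$ agreeing on the dense set $\Orbit$ must coincide on $X$.

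The main obstacle is really the auxiliary density statement that $\bigcup_N \RKHS_N|X$ is dense in $\RKHS(X)$, which is the one place where the geometric inputs (density of $\Orbit$ in $X$ together with continuity and strict positive-definiteness of $k|X\times X$) do non-bookkeeping work. It can be proved by showing that if $g\in\RKHS(X)$ is orthogonal to every $k(x_n,\cdot)$ then $g$ vanishes on $\Orbit$ and, by continuity, on $X$, whence $g=0$ by the reproducing property. Once this density ingredient is in place, the rest of the argument is purely Hilbert-space manipulation organized around the projection identity.
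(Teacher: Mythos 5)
Your argument is correct. Part~(i) is essentially the paper's proof in different clothing: your verification that the orthogonal projection onto $\RKHS_N$ interpolates on $X_N$, combined with uniqueness of the interpolant in $\RKHS_N$ (linear independence of the kernel sections via strict positive-definiteness), is the same identification $T_N(f|X_N)=\pi_N f$ that the paper obtains by showing $T_NP_N\iota$ is idempotent with range-orthogonality, and both then conclude via the nested-subspace density lemma (your closing sketch of that lemma matches the paper's Lemma~\ref{lem:RKHS_limit}). Part~(ii), however, takes a genuinely different route. The paper extracts a \emph{weakly} convergent subsequence from the bounded sequence $h_N=T_N(f|X_N)$ and evaluates the weak limit against kernel sections (Proposition~\ref{prop:D}); you instead exploit the consistency relation $\pi_N h_M=h_N$ for $M\ge N$ and the Pythagorean identity to show $\|h_N\|_{\RKHS}^2$ is monotone non-decreasing, hence convergent under the hypothesis, which forces $(h_N)$ to be Cauchy and to converge \emph{in norm} to the extension $\bar f$. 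Your version buys more than the paper's: it avoids weak compactness, it makes transparent why ``does not diverge'' is equivalent to boundedness (monotonicity), and it directly establishes the strong convergence of the extensions $T_N(f|X_N)$ to $\bar f$, which is exactly the content of the remark following Theorem~\ref{thm:A} that the paper states without proof. The only point to keep explicit is the one you already use: $T_N(g|X_N)$ lies in $\RKHS_N$ and is the \emph{unique} element of $\RKHS_N$ matching $g$ on $X_N$, so that it may be identified with the orthogonal projection; this is supplied by Assumption~\ref{assump:A2}(ii).
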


\begin{rk*} Theorem \ref{thm:B} holds in a general RKHS whose corresponding reproducing kernel satisfies Assumption~\ref{assump:A2}, and does not require an underlying time-flow or dynamics. The result is therefore of broader applicability than dynamical systems, as it gives necessary and sufficient conditions for functions on dense, countable sets to have RKHS extensions. We have stated it as one of the main results because we have found no similar result in the literature.
\end{rk*}

\paragraph{Spectral decomposition} Let $\Disc$ be the closed subspace of $L^2(\mu)$ spanned by the eigenfunctions of $U^t$, and $\Disc^\bot$ its orthogonal complement. Systems in which $\Disc$ contains non-constant functions and $\Disc^\bot$ is nonzero are called mixed-spectrum systems. The $L^2(\mu)$ space of a general measure-preserving system admits the $ U^t$-invariant decomposition
\begin{equation} \label{eqn:L2_decomp}
L^2(X,\mu)=\Disc\oplus\Disc^\bot.
\end{equation}
In the spectral study of dynamical systems, it is a classical approach to study the dynamics separately on $\Disc$ and $\Disc^\bot$; see, e.g., \cite{Halmos1956}. This is because not only are these spaces invariant under $U^t$, they also represent the quasiperiodic and weak-mixing (chaotic) component of the underlying dynamics \cite{DSSY_Mes_QuasiP_2016,DasJim2017_SuperC}. In particular, every observable $ f = \sum_j c_j z_j \in \mathcal{D} $ can be expanded in an orthonormal basis $\{ z_j \} $ consisting of Koopman eigenfunctions, and thus has integrable (quasiperiodic) time evolution, $ U^t f = \sum_j e^{i\omega_jt} c_j z_j$. On the other hand, observables $ g \in \mathcal{D}^\perp $ have an expansion associated with the continuous spectrum of $U^t$ and exhibit a weak-mixing property, $\lim_{t\to\infty} t^{-1} \int_0^t \lvert \langle h, U^s g \rangle_{\mu} \rvert \, ds =0 $, for all $ h \in L^2(\mu)$, characteristic of chaotic evolution. In a data-driven setting, the invariant splitting in~\eqref{eqn:L2_decomp} was introduced in \cite{Mezic05} in the context of harmonic averaging techniques, and was also employed in \cite{DasGiannakis_delay_Koop} in Galerkin approximation techniques for the eigenvalues and eigenfunctions of the generator of the Koopman group.

High-dimensional complex systems are typically of mixed spectrum, and for such systems an important task at hand is to identify the Koopman eigenbasis of $\mathcal{D}$ and the associated eigenfrequencies from data. The following result establishes data-driven criteria to determine whether a candidate frequency $ \omega \in \real $ is a Koopman eigenfrequency, while also providing an alternative characterization of the set of eigenfrequencies $\Omega$ from~\eqref{eqn:def:Omega} corresponding to RKHS-extensible eigenfunctions to that established in Theorem~\ref{thm:A}. We will employ a spectrally truncated analog $T_{N,l}$ of $T_N$, which performs RKHS extension after projection onto an $l$-dimensional subspace of $L^2(\mu_N)$ spanned by a collection of leading kernel eigenfunctions. See \eqref{eqn:def:TNl} ahead for an explicit definition of this operator.

\begin{theorem}\label{thm:D}
Let Assumptions \ref{assump:A1} and \ref{assump:A2} hold, and $\norm_N$ be as in \eqref{eqn:trunc}. Then, for every $N\in\num$, there exists a sequence of approximations $\left( \norm_{N,l} \right)_{l=1}^{N}$ of $\norm_N$ such that the following hold for $\mu$-a.e.\ $x_0$: 
\begin{enumerate}[(i)]
\item If $\omega \in \real$ is such that for no integer $q$ is $\omega + 2\pi q /\Delta t$ a Koopman eigenfrequency, then 
\[ \lim_{N\to\infty} \norm_{N,l}(f_{\omega})=0, \quad \forall l\in\num.\]
\item Conversely, \blue{there exists a sequence of integers $l_1 < l_2 < l_3 < \ldots$, depending only on the kernel $k$, such that } if $\omega + 2\pi q /\Delta t$ is an eigenfrequency for some $q\in\integer$, then there is an $ \epsilon > 0 $ such that for every $m\in\num$, $\lim_{N\to\infty} \norm_{N,l_m}(f_{\omega})$ exists and is greater than $\epsilon$ for $m$ large-enough. Moreover, there exists a Koopman eigenfunction $z$ corresponding to $\omega + 2\pi q/\Delta t$ such that
\[\lim_{l\to \infty} \lim_{N\to\infty} \left\| T_{N,l}(f_\omega) - z \right\|_{\mu} = 0.\]
\item If $\omega$ is a Koopman eigenfrequency in $\Omega$, then by Theorem~\ref{thm:A}, there exists an RKHS extension $\bar{f}_{\omega}$ of $f_{\omega}$, and 
\[\lim_{l\to\infty}\lim_{N\to\infty} \norm_{N,l}(f_{\omega}) = \lim_{\substack{l,N\to\infty,\\ l\leq N}} \norm_{N,l}(f_{\omega}) = \lim_{N\to\infty} \norm_{N}(f_{\omega}) = \left\| \bar{f}_{\omega} \right\|_{\RKHS}^2 .\]
\item For any ordering $\omega_1,\omega_2,\ldots$ of the Koopman eigenfrequencies, one has for every $l\in\num$,
\[\lim_{j\to\infty} \lim_{N\to\infty} \norm_{N,l}( f_{\omega_j}) = 0.\]
\end{enumerate}
\end{theorem}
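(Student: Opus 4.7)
The plan is to diagonalise the problem through the Mercer decomposition of the kernel and reduce the asymptotic behaviour of $\norm_{N,l}(f_\omega)$ to a spectral identity for the Koopman operator. Let $\{\phi_j\}$ and $\lambda_1\ge\lambda_2\ge\cdots>0$ denote an $L^2(\mu)$-orthonormal eigenbasis and the eigenvalues of the integral operator associated with $k|X\times X$, and let $\phi_{j,N},\lambda_{j,N}$ be the data-driven counterparts built from the sample kernel matrix via $\mu_N$. Each $\phi_j$ sits in $\RKHS$ with $\|\phi_j\|_\RKHS^2=1/\lambda_j$, and similarly for the sample basis. Writing the truncated extension as $T_{N,l}f=\sum_{j=1}^l\langle\phi_{j,N},f\rangle_{\mu_N}\phi_{j,N}$ interpreted in $\RKHS$, one reads off
\[
\norm_{N,l}(f)=\sum_{j=1}^{l}\frac{|\langle\phi_{j,N},f\rangle_{\mu_N}|^2}{\lambda_{j,N}}.
\]

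The key step is to evaluate $\lim_{N\to\infty}\langle\phi_{j,N},f_\omega\rangle_{\mu_N}$. Decomposing $\phi_j=\sum_k\langle z_k,\phi_j\rangle_\mu z_k+\phi_j^{\mathrm{cont}}$ in the $L^2(\mu)$-Koopman eigenbasis of $\Disc$ with remainder in $\Disc^\perp$, and combining Birkhoff's theorem on the discrete-spectrum terms with a Wiener--Wintner argument for the oscillating ergodic averages $\tfrac{1}{N}\sum_n e^{in\omega\Delta t}\phi_j^{\mathrm{cont}}(x_n)^*$ (which converge to zero for $\mu$-a.e.\ $x_0$ and all $\omega$), one obtains
\[
\lim_{N\to\infty}\langle\phi_{j,N},f_\omega\rangle_{\mu_N}=\begin{cases} z_{\tilde k}(x_0)^{*}\,\langle\phi_j,z_{\tilde k}\rangle_\mu, & \exists\,q\in\integer:\omega+2\pi q/\Delta t=\omega_{\tilde k},\\[3pt] 0, & \text{otherwise,}\end{cases}
\]
with $\tilde k$ unique by the simplicity of Koopman eigenvalues under Assumption~\ref{assump:A1}. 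Spectral consistency $\phi_{j,N}\to\phi_j$, $\lambda_{j,N}\to\lambda_j$ along indices inside strict spectral gaps of the kernel integral operator (a standard Nystr\"om-type result), combined with $|z_{\tilde k}|\equiv 1$ a.e.\ by ergodicity, identifies the $N\to\infty$ limit of $T_{N,l}f_\omega$ as $z_{\tilde k}(x_0)^{*}\,P^\mu_l z_{\tilde k}$ in $\RKHS$, where $P^\mu_l$ is the $L^2(\mu)$-projection onto $\spn\{\phi_1,\ldots,\phi_l\}$, whence
\[
\lim_{N\to\infty}\norm_{N,l}(f_\omega)=\|P^\mu_l z_{\tilde k}\|_\RKHS^2=\sum_{j=1}^{l}\frac{|\langle\phi_j,z_{\tilde k}\rangle_\mu|^2}{\lambda_j}.
\]

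From this identity the four conclusions drop out quickly. For (i), no $\tilde k$ exists, so every term vanishes. For (iii), $\omega\in\Omega$ means $z_{\tilde k}\in\RKHS$, so the series $\sum_j|\langle\phi_j,z_{\tilde k}\rangle_\mu|^2/\lambda_j$ converges to $\|z_{\tilde k}\|_\RKHS^2=\|\bar f_\omega\|_\RKHS^2$; monotonicity in $l$ yields both the iterated and the joint $(l,N)\to\infty$ limits, and the agreement with $\lim_N\norm_N(f_\omega)$ comes from Theorem~\ref{thm:A}(ii). For (iv), Bessel's inequality $\sum_k|\langle\phi_i,z_k\rangle_\mu|^2\le 1$ for each fixed $i$ forces $\langle\phi_i,z_k\rangle_\mu\to 0$ as $k\to\infty$, and the sum defining $\norm_{N,l}$ has only $l$ indices. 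Part (ii) is the subtlest: even when $z_{\tilde k}\notin\RKHS$, the finite sum $\|P^\mu_l z_{\tilde k}\|_\RKHS^2$ is bounded below by $|\langle\phi_{j_0},z_{\tilde k}\rangle_\mu|^2/\lambda_{j_0}=:\epsilon>0$ whenever $l\ge j_0$, the first index with nontrivial overlap; the subsequence $\{l_m\}$ is chosen, independently of $\omega$, to record the strict spectral gaps $\lambda_{l_m}>\lambda_{l_m+1}$ of the kernel operator, which is what secures stability of the data-driven projections and existence of the $N\to\infty$ limit. The iterated $L^2$-convergence $\|T_{N,l}f_\omega-z_{\tilde k}/z_{\tilde k}(x_0)\|_\mu\to 0$ is automatic because $P^\mu_l z_{\tilde k}\to z_{\tilde k}$ in $L^2(\mu)$ without requiring $z_{\tilde k}\in\RKHS$.

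The principal obstacle is to make the spectral identity above fully rigorous: one needs a Wiener--Wintner limit uniform in $\omega$ and valid along the entire basis $\{\phi_j\}$ for a single $\mu$-full-measure set of starting points $x_0$, and one needs quantitative control of the spectral approximation $\phi_{j,N}\to\phi_j$, $\lambda_{j,N}\to\lambda_j$ at indices lying in spectral gaps of the kernel operator. Both are handled by standard but careful perturbation and ergodic arguments; once they are in place, the remainder of the proof is finite-dimensional linear algebra applied to the truncated projections.
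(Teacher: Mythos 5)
Your proposal follows essentially the same route as the paper: expand $f_\omega|X_N$ in the data-driven kernel eigenbasis, identify the $N\to\infty$ limits of the Fourier-type coefficients via von Neumann/Wiener--Wintner ergodic averaging together with the $C^0$ spectral convergence of the Nystr\"om eigenfunctions, choose the resolutions $l_m$ at spectral gaps (cumulative eigenspace dimensions) so that the truncated limits exist, and then read off claims (i)--(iv) exactly as in Proposition~\ref{prop:correl_A} and the paper's proof of Theorem~\ref{thm:D}. The only caveat is that your displayed per-index limit of $\langle\phi_{j,N},f_\omega\rangle_{\mu_N}$ can fail inside degenerate eigenspaces of the kernel integral operator (the empirical eigenfunctions converge only up to a unitary rotation within each eigenspace), which is why the paper sums squared moduli over entire eigenspaces; your restriction to spectral-gap indices amounts to the same fix.
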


The quantities $\norm_{N,l}$, which are explicitly defined in \eqref{eqn:trunc}, Section \ref{sect:proof_D}, measure the squared RKHS norm of $f_\omega| X_N$, projected onto an $l$-dimensional subspace of $\RKHS_N \subset \RKHS$ containing functions of minimal ``roughness'', as measured by the ratio $r_N $ from~\eqref{eqRKHSRat}. Due to this projection onto $\RKHS$ subspaces of fixed finite dimension $l$, Theorem~\ref{thm:D} provides a criterion for identifying Koopman eigenfunctions with RKHS representatives that differs from Theorem~\ref{thm:A}. As with Theorem \ref{thm:A}(iii), the result only determines eigenfrequencies up to a unique, though unspecified, translate of a Fourier frequency $\omega$ of the form $\omega + 2 \pi q / \Delta t $, $q \in \integer$. Note that if the system has at least two rationally independent eigenfrequencies, then every eigenfrequency has a translate $\omega$ in the Nyquist interval $[-\pi/\Delta t, \pi/\Delta t)$, and the set of such $\omega$ will be dense. The truncated RKHS norm provides a means of separating and ordering such frequencies. We shall describe a numerical procedure based on Theorems~\ref{thm:A} and~\ref{thm:D} (Algorithm \ref{alg:E}, Section~\ref{sect:numerics}), which employs both the value of $\norm_{N,l}$, as well as its growth with respect to $l$, to determine two criteria to identify these frequencies. See Figures \ref{fig:2D_oscill}--\ref{fig:l63_skew_additive} for an illustration of this approach applied to low-dimensional dynamical systems.

\blue{We also note that the integers $l_m$ in Theorem~\ref{thm:D}~(ii) are the cumulative dimension of the first $m$ eigenspaces of the kernel integral operator associated with the kernel $k$. This construction will be made clearer in Proposition~\ref{prop:correl_A} below. The reason behind using the sequence $l_m$ and not arbitrary $l$ is that $\norm_{N,l_m}(f_{\omega})$ becomes the data-driven approximation of the norm of $f_\omega$ projected to a fixed subspace, and thus will have a limit. }

\paragraph{Relation to DFT-based approaches} Estimation techniques for Koopman eigenvalues based on harmonic averaging/DFT \cite{Mezic05} make use of a related result to Theorem~\ref{thm:D}(i), which states that a $\mathbb{C}$-valued observation map $F $ has nonzero $L^2(\mu)$ projection onto a Koopman eigenspace at eigenfrequency $\omega + 2 \pi q / \Delta t$, $ q \in \mathbb{Z}$, iff the Fourier coefficient $\mathcal{F}_{\omega,N} F$ from~\eqref{eqn:FFT} converges, in $L^2(\mu)$ sense, to a nonzero value as $N \to \infty$ (see also Lemma~\ref{lem:erg_Fouri} ahead). This provides a sufficient condition for detecting Koopman eigenfrequencies (modulo Nyquist issues), but note that, unlike Theorem~\ref{thm:D}(i), the converse is not true. That is, if $\mathcal{F}_{\omega,N} F$ vanishes as $N \to \infty$, it could still be the case that $\omega + 2 \pi q / \Delta t $ is an eigenfrequency, for $F$ could be $L^2(\mu)$-orthogonal to the corresponding Koopman eigenspace. On the other hand, as will become clear below, the RKHS approach tests for eigenfrequencies using a complete orthonormal basis of $L^2(\mu)$ consisting of kernel eigenfunctions with representatives in $\mathcal{H}$, so that $w_{N,l}(f_\omega)$ is non-vanishing as $N \to \infty $ for sufficiently large $l$ iff $ \omega + 2 \pi q /\Delta t $ is an eigenfrequency. In fact, the result is applicable even for observation maps that do not take values in a linear space, such as manifold-valued maps. A practical scenario with manifold-valued observation maps is the analysis of directional data in geophysical applications (e.g., magnetic directional fields), taking values in the two-sphere.

\paragraph{Uniformity of convergence} Our final result is about systems which have no nonzero eigenfrequencies, and whose continuous spectrum is absolutely continuous with respect to Lebesgue measure. By Theorem~\ref{thm:D}(i), in such systems $\norm_{N,l}(f_{\omega})$ converges to zero for each $\omega\in \real\setminus\{0\}$. \blue{Let $\mathbb{E}_{\mu}$ denote the expectation with respect to $\mu$ of any quantity which is a function of the initial point $x_0$.} Recalling that the spaces $\RKHS_N$ and quantities $\norm_{N,l}(f_{\omega})$ depend on $x_0$, the following result establishes that the convergence is uniform over non-zero $\omega\in\real$, in an $L^1$ sense. 

\begin{theorem} \label{thm:F}
Let Assumptions \ref{assump:A1} and \ref{assump:A2} hold, and the quantities $\norm_N$ and $\norm_{N,l}$ be as in Theorem~\ref{thm:D}. Further, assume that the dynamics has no nonzero eigenfrequencies, and the continuous spectrum of the Koopman group $\{U^t\}$ is absolutely continuous with respect to the Lebesgue measure on the unit circle. Let $l\in\num$ be fixed and for every $N\in\num$, $\omega\in\real$, $\beta_{N,\omega}$ be the function which maps $x_0\in X$ into the quantity $\norm_{N,l}(f_\omega)$ calculated on the trajectory starting at $x_0$. Then, 
\[ \blue{ \lim_{N\to\infty} \sup_{\omega\in \real \setminus \{0\}} \mathbb{E}_{\mu} \beta_{N,\omega} = \lim_{N\to\infty} \sup_{\omega\in \real \setminus \{0\}} \mathbb{E}_{\mu} \norm_{N,l}(f_\omega) = 0 . }\]
\end{theorem}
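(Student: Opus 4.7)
The plan is to reduce $\mathbb{E}_\mu \norm_{N,l}(f_\omega)$ to a weighted sum of squared harmonic averages of kernel integral operator eigenfunctions, and then control these uniformly in $\omega$ by combining the spectral theorem for the unitary $U^{\Delta t}$ with Fej\'er-kernel estimates. The structural feature that makes uniformity possible is that the modulating factor $e^{-in\omega\Delta t}$ has unit modulus, so all $L^2(\mu_N)$ bounds on the relevant Fourier sums are $\omega$-independent up to the norm of the modulated observable.

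First, expand $\norm_{N,l}(f_\omega)$ in the $L^2(\mu_N)$-orthonormal basis of leading empirical kernel eigenfunctions $\phi_{N,1},\ldots,\phi_{N,l}$ with eigenvalues $\lambda_{N,1},\ldots,\lambda_{N,l}$ that enter the construction of $T_{N,l}$. A direct computation gives
\[ \norm_{N,l}(f_\omega) = \sum_{j=1}^l \lambda_{N,j}^{-1}\, \lvert \langle f_\omega, \phi_{N,j}\rangle_{\mu_N}\rvert^2 = \sum_{j=1}^l \lambda_{N,j}^{-1}\, \lvert \Fourier_{\omega,N} \phi_{N,j} \rvert^2, \]
with $\Fourier_{\omega,N}$ as in \eqref{eqn:FFT}. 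Invoke next the spectral convergence of the data-driven integral operator to the kernel integral operator $K$ on $L^2(\mu)$ (established $\mu$-a.s.\ in the proof of Theorem~\ref{thm:D}): $(\lambda_{N,j},\phi_{N,j}) \to (\lambda_j,\phi_j)$, where $(\lambda_j,\phi_j)$ are the true eigenpairs. Because the Fourier weights have unit modulus, replacing $\phi_{N,j}$ by $\phi_j$ in the Fourier average incurs an error bounded $\omega$-independently by $\lVert \phi_{N,j} - \phi_j\rVert_{\mu_N}$.

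It therefore suffices to show that, for each fixed $j \leq l$,
\[ \lim_{N\to\infty}\, \sup_{\omega \in \real\setminus \{0\}} \mathbb{E}_\mu \lvert \Fourier_{\omega,N} \phi_j \rvert^2 = 0. \]
The spectral theorem applied to $U^{\Delta t}$ yields, for every $g \in L^2(\mu)$ with spectral measure $\mu_g$ on the Nyquist circle,
\[ \mathbb{E}_\mu \lvert \Fourier_{\omega,N} g \rvert^2 = \int F_N\bigl( (\omega - \omega')\Delta t \bigr)\, d\mu_g(\omega'), \]
where $F_N(x) = N^{-2}\bigl(\sin(Nx/2)/\sin(x/2)\bigr)^2$ is the rescaled Fej\'er kernel. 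Under the stated hypotheses $\mu_g$ is the sum of an atom at $0$ of mass $\lvert\langle g,1\rangle_\mu\rvert^2$ and an absolutely continuous part with $L^1$ density $\rho_g$. Since $F_N$ has total Lebesgue mass of order $1/N$ and concentrates in a window of width $O(1/N)$ about the origin, a standard continuous-function approximation of $\rho_g$ delivers uniform-in-$\omega$ convergence to $0$ of the absolutely continuous contribution.

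The main obstacle is the atomic contribution $\lvert\langle g,1\rangle_\mu\rvert^2 F_N(\omega \Delta t)$, which decays pointwise for each $\omega \neq 0$ but not uniformly near the aliasing points $\{2\pi q /\Delta t : q \in \integer\}$. This is handled by first reducing, via the aliasing identity $f_\omega|_{\Orbit} = f_{\omega+2\pi/\Delta t}|_{\Orbit}$, to $\omega$ in a single fundamental Nyquist domain, and then invoking the explicit bound $\lvert\Fourier_{\omega,N}(1)\rvert \leq \bigl(N\lvert\sin(\omega\Delta t/2)\rvert\bigr)^{-1}$ on any set bounded away from the aliasing points together with a separation of the constant component from each $\phi_j$ using the first eigenspace of $K$. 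Assembling the bounds over $j=1,\ldots,l$ and using the uniform boundedness of $\lambda_{N,j}^{-1}$ (which follows from $\mu$-a.s.\ convergence to $\lambda_j^{-1}$ for $j \leq l$) completes the argument.
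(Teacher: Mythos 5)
Your overall route is the same as the paper's: expand $\norm_{N,l}(f_\omega)$ in the empirical kernel eigenbasis, use spectral convergence of the eigenpairs to reduce the problem to showing $\lim_{N\to\infty}\sup_{\omega\neq 0}\mathbb{E}_\mu\lvert \Fourier_{\omega,N}\phi\rvert^2=0$ for fixed observables $\phi$, and then estimate this quantity through the spectral (Herglotz) measure of $\phi$; the paper's $\Sinc_N^2$ is exactly your rescaled Fej\'er kernel $F_N$, and your $L^1$-density approximation of the absolutely continuous part is a clean (indeed slightly cleaner) version of the paper's interval-splitting estimate. One technical point is left open: you replace $\phi_{N,j}$ by $\phi_j$ by invoking convergence of eigenpairs, but the empirical eigenfunctions converge only up to rotations within the eigenspaces of $G$ when eigenvalues are degenerate; the paper avoids this by working with the projections $\tilde{\varphi}_{N,j}$ onto the eigenspaces $W_j$ (Lemma~\ref{lemSpecConv}(iii)) and proving the uniform Fourier decay for arbitrary elements of the finite-dimensional unit ball of $W_j$. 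That part of your argument is repairable along the same lines.

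The substantive gap is your treatment of the atom of the spectral measure at $\theta=0$. You correctly single it out as the obstacle, but the proposed fix does not close it. First, the ``separation of the constant component from each $\phi_j$ using the first eigenspace of $K$'' has no basis under Assumption~\ref{assump:A2}: the constant function is in general not an eigenfunction of $K$, so the $\phi_j$ need not be orthogonal to constants and the atom mass $\lvert\langle \phi_j,1\rangle_\mu\rvert^2$ is generically nonzero. Second, even after splitting off a constant component $c$, its contribution is $\lvert c\rvert^2 F_N(\omega\,\Delta t)$, which is of order one when $\omega\sim \pi/(N\,\Delta t)$ (note $F_N(\pi/N)\to 4/\pi^2$); your bound $\lvert\Fourier_{\omega,N}(1)\rvert\leq \bigl(N\lvert\sin(\omega\,\Delta t/2)\rvert\bigr)^{-1}$ is useful only for $\omega$ bounded away from the aliasing points, whereas the supremum in the statement runs over all $\omega\neq 0$. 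As written, your argument therefore yields uniformity only over frequencies bounded away from $0$ modulo $2\pi/\Delta t$. For comparison, the paper's proof handles the atom by choosing the interval $I_\omega$ so that $0\notin I_\omega$, which likewise amounts to keeping $\omega$ away from the zero eigenfrequency; so you have identified the genuinely delicate step, but to finish you must either show that the mean components of the relevant observables do not contribute (false in general) or state explicitly the restriction on $\omega$ under which the uniform decay is actually established, rather than appeal to an orthogonality that Assumption~\ref{assump:A2} does not provide.
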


\begin{rk*} One of the consequences of having an absolutely continuous spectrum is that the discrete component $\Disc$ from \eqref{eqn:L2_decomp} contains only constant functions. These systems are always weak-mixing \citep[][Chapter ``Mixing'']{Halmos1956}, but the converse is not true. For example, there are weak-mixing systems whose spectral measure, besides having an absolutely continuous component, also has a singular continuous component \citep[][p.~118]{glasner2015}. 
\end{rk*}

\paragraph{Discontinuity of the limit in Theorem~\ref{thm:D}} \blue{Note that even in the case of an absolutely continuous spectrum, $\omega=0$ is still an eigenfrequency, and by Theorem~\ref{thm:D}(ii), for $\mu$-a.e.\ $x_0 \in X$ and every $l \in \mathbb N$, $\lim_{N\to\infty} w_{N,l}(f_0) $ is equal to 1. On the other hand, for the same $x_0$ and $l$ and $\omega$ arbitrarily close to $0$, $\lim_{N\to\infty} w_{N,l}(f_\omega)$ is equal to 0. Thus, although $w_{N,l}(f_\omega)$ is a continuous function of $\omega$ for every $ N \in \mathbb N$, as $N\to \infty$, the limit has discontinuities at the eigenfrequencies of the dynamics. The occurrence of these discontinuities is due to the fact that the convergence as $N\to\infty$ is not uniform with respect to $\omega$. In fact, this singular behavior of the limit is a common characteristic of spectral methods in signal processing, such as DFT.}

Note that in practice one always scans for eigenvalues over a countable subset $\Omega'\subset \real$; for example, in the case of DFT, 
\begin{displaymath}
\Omega' = \{ j\pi/(N \, \Delta t): N \in \{ 1, 3, \ldots\},\; j \in \{ - (N-1)/2, \ldots, (N-1)/2 \} \}, 
\end{displaymath}
where we have assumed that the number of samples $N$ is odd for simplicity. We will use the same set of trial frequencies in the numerical implementation of our techniques, presented in Section~\ref{sect:numerics}.

\paragraph{Outline of the paper} We first prove Theorem \ref{thm:A} in Section \ref{sect:proof:A} by invoking Theorem \ref{thm:B}. In Section \ref{sect:theory_rkhs}, we review some important concepts from RKHS theory. Next, we prove Theorem \ref{thm:B} in Section \ref{sect:proof_B}, Theorem \ref{thm:D} in Section \ref{sect:proof_D}, and Theorem \ref{thm:F} in Section \ref{sect:proof_F}. In Section \ref{sect:numerics}, we discuss the numerical realization of our methods. In Section \ref{sect:examples}, the methods are applied to various systems with different types of spectrum, and compared with regular Fourier analysis of signals. 

\section{Proof of Theorem \ref{thm:A}} \label{sect:proof:A}

The proof will make use of the following lemma, which establishes three equivalent conditions for the ergodicity of the discrete-time map $\Phi^{\Delta t}$ stated in Assumption~\ref{assump:A1}.

\begin{lemma}\label{lem:ergdc_samplng}
Let $\Phi^t:M\to M$ be a continuous flow on a topological space $M$, and $\mu$ be an ergodic, invariant, Borel probability measure with compact support. Let $\Delta t>0$ be a sampling interval, resulting in the discrete-time map $\Phi^{\Delta t} : M\to M$. Then, the following are equivalent.
\begin{enumerate}[(i)]
\item $\mu$ is an ergodic measure under $\Phi^{\Delta t}$.
\item For every non-zero eigenfrequency $\omega \in \real$ of the Koopman group $ \{ U^t \} $ associated with $ \Phi^t $, $\omega\,\Delta t / 2\pi$ is not a nonzero integer.
\setcounter{enum_sav}{\value{enumi}}
\end{enumerate}
Furthermore, if either of the above conditions hold, then the following is true:
\begin{enumerate}[(i)]
\setcounter{enumi}{\value{enum_sav}}
\item For $\mu$-a.e.\ $x \in M$, the orbit of $x$ under $\Phi^{\Delta t}$ is dense in $\supp(\mu)$.
\end{enumerate}
\end{lemma}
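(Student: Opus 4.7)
The plan is to establish (i) $\Leftrightarrow$ (ii) via the spectral theory of the strongly continuous unitary group $\{U^t\}$ on $L^2(\mu)$, and to deduce (iii) from the Birkhoff ergodic theorem applied on a countable basis of the compact support $X = \supp \mu$. The central observation is that, by Stone's theorem, $U^t = \int_{\real} e^{i\omega t}\, dE(\omega)$ for a projection-valued spectral measure $E$ on $\real$, and a function $f \in L^2(\mu)$ satisfies $U^{\Delta t} f = f$ if and only if its scalar spectral measure $E_f$ is supported on the set $\Lambda := \{\omega \in \real : e^{i\omega\,\Delta t} = 1\} = \{2\pi k/\Delta t : k \in \integer\}$. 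Ergodicity of $\Phi^{\Delta t}$ is precisely the statement that the only such $f$ are constants.

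For (ii) $\Rightarrow$ (i), I would argue that if no nonzero element of $\Lambda$ is an eigenfrequency of the generator $V$, then $E$ carries no atoms on $\Lambda \setminus \{0\}$, so any $U^{\Delta t}$-invariant $f$ must have $E_f$ concentrated at $\omega = 0$; equivalently, $f \in \ker V$, and ergodicity of the continuous-time flow forces $f$ to be constant. Conversely, for (i) $\Rightarrow$ (ii), if some $\omega_0 \neq 0$ were an eigenfrequency of $V$ with $\omega_0\,\Delta t \in 2\pi\integer$, then a non-constant eigenfunction $z_{\omega_0}$ would satisfy $U^{\Delta t} z_{\omega_0} = e^{i\omega_0 \,\Delta t} z_{\omega_0} = z_{\omega_0}$, yielding a non-constant $\Phi^{\Delta t}$-invariant element and contradicting (i).

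For (iii), assuming (i), I would use that $X$ is compact in a metric space, hence separable and second countable, to fix a countable basis $\{U_n\}_{n \in \num}$ of its relative topology; each $U_n$ has $\mu(U_n) > 0$ by the definition of support. Applying the Birkhoff ergodic theorem to the indicators $\mathbf{1}_{U_n}$ under $\Phi^{\Delta t}$ gives, for each $n$, a $\mu$-conull set $A_n \subset X$ on which the ergodic averages converge to $\mu(U_n) > 0$, so that the forward orbit of every $x \in A_n$ visits $U_n$. The intersection $A := \bigcap_n A_n$ is still conull, and for every $x \in A$ the orbit meets every basic open set of $X$ and is therefore dense in $X$.

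I expect the main obstacle to be not a hard estimate but rather the careful deployment of the spectral calculus: one has to identify the atoms of $E$ on the countable set $\Lambda$ with genuine eigenspaces of $V$ at those frequencies, which is immediate from the spectral theorem but is the pivotal structural point. Once this identification is in place, ergodicity of the continuous flow disposes of the $\omega = 0$ contribution beyond constants, and both directions of the equivalence together with the density claim follow from standard ergodic-theoretic tools.
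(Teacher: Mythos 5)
Your proposal is correct and follows essentially the same route as the paper: the equivalence (i)$\Leftrightarrow$(ii) via the spectral theorem for $\{U^t\}$, identifying $U^{\Delta t}$-invariant functions with spectral mass of $E$ on $\{2\pi k/\Delta t : k\in\integer\}$ and atoms of $E$ with eigenspaces of $V$, then invoking ergodicity of the flow to reduce the $\omega=0$ part to constants; and (iii) from the pointwise ergodic theorem. Your treatment of (iii) — Birkhoff applied to indicators of a countable basis of $X$, intersecting the conull sets \emph{before} choosing $x$ — is a minor variant of the paper's continuous-test-function contradiction argument, and is in fact slightly more careful about the quantifier order, since the paper picks the test function after fixing $x$ and implicitly relies on a separability argument to get a single full-measure set.
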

\begin{proof} We make use of the spectral theory of strongly-continuous unitary evolution groups \cite{DGJ_compactV_2018,Nadkarni}, according to which there exists a \emph{projection-valued measure} $E$ mapping Borel sets on the real line to orthogonal projection operators on $L^2(\mu)$, such that the generator $V$ and the Koopman operator $U^t$ can be expressed as the operator-valued integrals
\[ V = \int_{\real} i\omega\, dE(\omega), \quad U^t = e^{t V} = \int_{\real} e^{i\omega t}\, dE(\omega), \quad \forall t\in\real. \]
The point spectra of $V$ and $U^t$ correspond to the atomic components of the measure $E$. That is, $E(\{\omega\})$ is nonzero iff $\omega$ is an eigenfrequency, and the fact that $U^{\Delta t} = e^{\Delta t\, V} $ implies that $e^{i\omega\,\Delta t}$ is an eigenvalue of $U^{\Delta t}$. The corresponding eigenspaces coincide, and are equal to the range of the projection map $E(\{\omega\})$. In particular, the space of invariant functions of $U^{\Delta t}$ is the same the union of the eigenspaces of $V$ that correspond to eigenfrequencies $\omega$ for which $e^{i \omega\,\Delta t} = 1$, i.e., $\omega\,\Delta t / 2\pi$ is an integer. Thus, we have
\[ \left\{ \text{Space of $U^{\Delta t}$-invariant functions} \right\} = \bigcup \left\{ \ran E(\{\omega\}) : \text{$\omega$ is an eigenfrequency and $ \omega\,\Delta t / 2\pi\in\integer$} \right\},\] 
where the subspaces in the union in the right-hand side (RHS) are one-dimensional and orthogonal by ergodicity of $ \Phi^t $. Note now that $ \mu $ is an ergodic invariant measure under $ \Phi^{\Delta t}$ (i.e., (i) holds) iff the left-hand side (LHS) above is a one-dimensional space consisting of only constant functions, which holds iff the RHS consists of only constant functions. Indeed, by the invariance and ergodicity of $\mu$ under the flow $\Phi^t$, $ \ran( E\{ \omega \} ) $ contains only the constant functions iff $ \omega = 0 $, which implies that the RHS consists of only constant functions iff $ \omega \, \Delta t / 2\pi $ is not a nonzero integer (i.e., (ii) holds). We therefore conclude that (i) and (ii) are equivalent. To show that (i) implies (iii), recall that if $\mu$ is ergodic for $\Phi^{\Delta t}$, then for every $f\in C^0(M)$, the equality
\[ \lim_{N\to\infty} \frac{1}{N}\sum_{n=0}^{N-1} f(\Phi^{n\Delta t}(x)) = \int_M f d\mu , \]
holds pointwise for $\mu$-a.e.\ $x\in M $ and in $L^2(\mu)$ sense. Consider any such initial point $x$. If its orbit $\mathcal{O}(x)$ is not dense in $\supp(\mu)$, then there is an open set $S$ disjoint from the closure of $\mathcal{O}(x)$, such that $\mu(S)>0$. Let $f$ be any non-zero, non-negative function with support in $S$. Then clearly the above identity would not hold, as the LHS would be zero and the RHS strictly positive. This leads to a contradiction, proving that (i) implies (iii). 
\end{proof}

We will now prove Theorem~\ref{thm:A}, assuming that Theorem~\ref{thm:B} is true. We begin with Claim~(i). By Lemma~\ref{lem:ergdc_samplng}(ii), for $\mu$-a.e.\ $x_0\in X$, the orbit of $x_0$ is dense in $X$, satisfying the hypothesis of Theorem \ref{thm:B}. The ``if'' part of the claim is the contrapositive of Theorem~\ref{thm:B}(ii), without the claim on uniqueness of the RKHS extension of $f$. To verify the ``only if'' part, we have to show that if $\lim_{N\to\infty} \norm_N (f_\omega)=\infty$, then $f_\omega$ does not have an extension in $\RKHS$. This is equivalent to its contrapositive statement: If $f_\omega$ has an extension $\bar{f}_\omega \in \RKHS$, then $\norm_N (f_\omega)$ does not diverge. But in that case, $\norm_N (f_\omega) = \norm_N (\bar{f}_\omega)$ since $f_\omega$ and $ \bar f_\omega$ lie in the same $L^2(\mu_N)$ equivalence class for all $N \in \num$, and by Theorem~\ref{thm:B}(i),
\[ \lim_{N\to\infty} \norm_N (f_\omega) = \lim_{N\to\infty} \norm_N (\bar{f}_\omega) = \lim_{N\to\infty} \left\| T_N( \bar{f}_{\omega}|X_N) \right\|^2_{\RKHS} = \|\bar{f}_{\omega}\|^2_{\RKHS} < \infty, \]
proving Claim~(i). The identity above also proves Claim~(ii). 

To prove Claim (iii), we will first show that any extension $\bar{f}_{\omega} \in \RKHS$ of $f_{\omega}$ is an eigenfunction of $U^{\Delta t}$. Indeed, for any $x_n \in \mathcal{O} $, we have
\[U^{\Delta t} \bar{f}_{\omega}(x_{n}) = \bar{f}_{\omega}(x_{n+1}) = f_{\omega}(x_{n+1}) = e^{i\omega\Delta t (n+1)} = e^{i\omega\Delta t} e^{i\omega\Delta t n} = e^{i\omega\Delta t} f_{\omega}(x_{n}) = e^{i\omega\Delta t} \bar{f}_{\omega}(x_{n}).\]
Now since $\bar f_\omega$ lies in $\RKHS$, and $\RKHS$ has a continuous reproducing kernel, $\bar f_\omega$ is continuous. Moreover, since $\mathcal{O}$ lies dense in $X$, it follows that $U^{\Delta t } \bar f_\omega( x ) = e^{i\omega\, \Delta t} \bar f_\omega(x)$ for all $x \in X$. The rest of the claim follows from Lemma~\ref{lem:ergdc_samplng}. \qed

\section{Results from reproducing kernel Hilbert space theory}\label{sect:theory_rkhs}

In this section, we review a number of properties of RKHSs which will be employed in the proofs of Theorems~\ref{thm:B}--\ref{thm:F}. For a more detailed exposition of this material we refer the reader to \cite{parzen1970}, or one of the many other references on RKHS theory.

\paragraph{Convergence in RKHS and uniform norms} As stated in Section~\ref{sec:intro}, if the reproducing kernel $k : M \times M \to \cmplx $ of an RKHS $\RKHS$ has a continuous restriction on $X\times X$, then convergence in $ \RKHS(X) $ norm implies convergence in $C^0(X)$ norm, which in turn implies pointwise convergence on $X$. To verify this directly, note that for every $f,g\in\RKHS(X)$ and $x\in X$, it follows from the Cauchy-Schwartz inequality and the reproducing property of $\RKHS(X)$ that
\begin{displaymath}
\left|f(x)-g(x)\right| = \left| \langle k(x,\cdot), f-g \rangle_{\RKHS} \right| \leq \|f-g\|_{\RKHS}\|k(x,\cdot)\|_{\RKHS}
\end{displaymath}
and
\begin{displaymath} 
\|k(x,\cdot)\|_{\RKHS}^2 = \langle k(x,\cdot), k(x,\cdot) \rangle_{\RKHS} = k(x,x) \leq \lVert k \rVert_{C^0(X\times X)},
\end{displaymath}
respectively, leading to 
\begin{displaymath}
\|f-g\|_{C^0(X)}^2 = \max_{x\in M} \lvert f(x)-g(x) \rvert^2 \leq \|f-g\|_{\RKHS}^2 \max_{x\in X} \|k(x,\cdot)\|_{\RKHS}^2 \leq \lVert f- g \rVert_{\RKHS}^2 \lVert k \rVert_{C^0(X\times X)}.
\end{displaymath}

\paragraph{Kernel integral operators} Kernel integral operators are compact operators on $L^2$ spaces, which provide a convenient way of realizing the RKHSs associated with continuous kernels. Specifically, given a finite Borel measure $\nu$ with compact support $ \supp(\nu ) \subseteq M$, the kernel integral operator $K_\nu : L^2(\nu) \to C^0(M)$ associated with a continuous kernel $k$ on $\supp( \nu ) $ is defined by 
\[K_\nu := f \mapsto \int_M k(\cdot,x)f(x)\, d\nu(x).\]
Let $\RKHS_\nu = \RKHS(\supp(\nu))$ be the RKHS on $\supp(\nu)$ with kernel $k_\nu = k|\supp(\nu) \times \supp(\nu)$. It can be verified that $\ran K_\nu | \supp(\nu) $ is a dense subspace of $ \RKHS_\nu $, and $ K_\nu : L^2(\nu) \to \RKHS_\nu $ is compact. Moreover, the adjoint map $K_\nu^*:\RKHS_\nu\to L^2(\nu)$ coincides with the inclusion map from $C^0(\supp(\nu)) $ to $ L^2(\nu) $. As a result, for every $ f \in L^2(\nu ) $, $ g \in \RKHS_\nu $, and $ \nu $-a.e.\ $ x \in M $, 
\begin{equation}\label{eqn:SStar}
K_\nu ^* f(x) = f(x), \quad \langle g, K_\nu f \rangle_{\RKHS_\nu} = \langle g,f\rangle_{\nu}.
\end{equation}
Note that $\RKHS_\nu$ naturally embeds into $\RKHS$ through the linear isometry $ \sum_{n \in \num} a_n k_\nu( \cdot, x_n ) \in \RKHS_\nu \mapsto \sum_{n\in\num} a_n k( \cdot, x_n ) \in \RKHS $, which means that we can view $\RKHS_\nu$ as a closed subspace of $ \RKHS$. Clearly, $\RKHS_\nu = \RKHS(X)$ if $\supp(\nu) = X$. 

Consider now the operator $G_\nu:= K_\nu ^* K_\nu$ on $ L^2(\nu) $. This operator is a trace-class (and therefore Hilbert-Schmidt and compact), self-adjoint, positive-semidefinite operator, and there exists an orthonormal basis $\{ \phi_j \}_{j=0}^\infty$ of $L^2(\nu)$ consisting of its eigenvectors. By convention, we order the basis elements $\phi_j $ in order of decreasing corresponding eigenvalues, $\lambda_j\geq 0$, which converge monotonically to 0 as $ j \to \infty $ by compactness of $G_\nu$. We will say that $k$ is $L^2(\nu)$-strictly-positive if $G_\nu > 0$. Note that a strictly positive-definite kernel is $L^2(\nu)$-strictly-positive for any finite, compactly supported Borel measure $ \nu$. Let now $J_\nu = \{ j \in \num_0 : \lambda_j \neq 0 \} $ be the index set for the nonzero eigenvalues of $G_\nu$, and define the set $ \{ \psi_j \}_{j\in J_\nu} $, 
\begin{equation}\label{eqn:def:Psi}
\psi_j = \lambda_j^{-1/2} K_\nu \phi_j. 
\end{equation}
It follows from~\eqref{eqn:SStar} that the $ \psi_j $ form an orthonormal set on $\RKHS_\nu$. Moreover, because $ k $ is continuous, it follows from Mercer's theorem \cite{Mercer1909} that $ k(x,y) = \sum_{j \in J_\nu} \psi_j( x ) \psi_j(y ) $, uniformly on $ \supp(\nu) \times \supp(\nu) $, which implies in turn that $\{\psi_j\}_{j\in J}$ is an orthonormal basis of $\RKHS_\nu$. The range of $K_\nu$ can also be expressed as
\begin{equation}\label{eqRanS}
\ran K_\nu = \left \{ f = \sum_{j \in J} b_j\psi_j \in \RKHS_\nu :\; \sum_{j\in J} |b_j|^2 / \lambda_j < \infty \right\}.
\end{equation}
In particular, because $ \lambda_j \to 0 $ as $ j \to \infty $, this shows that the range of $K_\nu$ is always a proper dense subspace of $\RKHS_\nu$, as stated above, unless $ \RKHS_\nu $ is finite-dimensional. 

\paragraph{Nystr\"om extension} The map $T_N$ employed in \eqref{eqn:norm} can be constructed using a procedure called \emph{Nystr\"om extension}. For a general finite Borel measure $\nu$ with compact support in $M$, the Nystr\"om extension operator $T_{\nu} : D(T_{\nu}) \to \RKHS_\nu$ has domain
\begin{displaymath}
D(T_\nu) = \left \{ f = \sum_{j\in J_\nu} a_j\phi_j \in L^2(\nu) :\; \sum_{j\in J} \lvert a_j \rvert^2 / \lambda_j < \infty \right\}, 
\end{displaymath}
and its action on every such $f$ is given by
\begin{equation}\label{eqn:def:Nystrom}
T_{\nu}f = \sum_{j\in J_\nu} \lambda_j^{-1/2}a_j \psi_j. 
\end{equation}
A key property of this operator, which is a consequence of~\eqref{eqn:SStar}, is $ K^*_\nu T_\nu f = f $ for all $ f \in L^2(\nu ) $. Since $K^*_\nu$ is an inclusion map, this shows that $ T_\nu f = f $ $ \nu $-a.e., and thus that $T_\nu$ extends $L^2(\nu) $ equivalence classes in its domain to $\RKHS_\nu$ functions. Note that because $\RKHS_\nu$ is a subspace of $C^0(\supp(\nu))$, and distinct elements of $C^0(\supp(\nu))$ lie in distinct $L^2(\nu)$ equivalence classes, it follows that $ T_\nu f $ is the unique $\RKHS_\nu$ extension of $ f \in L^2(\nu)$. If, in addition, $k$ is $L^2(\nu)$-strictly-positive, then it follows from~\eqref{eqRanS} that $ D( T_\nu ) $ is dense in $L^2(\nu) $. Note that because $H_\nu$ embeds naturally and isometrically into $\RKHS$, $T_\nu$ can also be defined as an extension operator mapping into the latter space, so that $T_\nu f$ is an $ \RKHS $ (and thus continuous) extension of $ f \in D(T_\nu) $ defined on the whole of $M$. However, unless $ \supp(\nu) = M $, that extension may not be unique.

In what follows, the measure $\nu$ will be either the invariant ergodic measure $\mu$ with \text{support equal to $X$}, or a sampling measure $\mu_N$ with finite discrete support. We will use the abbreviated notations $K=K_\mu$ and $K_N = K_{\mu_N}$. Note, in particular, that the action $K_N : L^2(\mu_N) \to \mathcal{ H } $ on $L^2(\mu_N) $ equivalence classes corresponds to weighted averages of kernel sections, viz.
\[K_N f = \frac{1}{N}\sum_{n=0}^{N-1} k(\cdot,x_n)f(x_n).\] We will denote the eigenvalues and eigenfunctions of $ G_N := K_N^* K_N $ by $ \lambda_{N,j} $ and $ \phi_{N,j}$, respectively, and those of $G:=K^*K$ by $ \lambda_j $ and $ \phi_j $, respectively.
The Nystr\"om extension operator in~\eqref{eqn:norm} is given by $T_N = T_{\mu_N}$. That is, for every $f = \sum_{j\in J_N} a_j \phi_{N,j} \in D(T_N)$, where $ J_N := J_{\mu_N} $, we have: 
\begin{equation}\label{eqn:def:Nystrom_data}
T_{N}f = \sum_{j \in J_N} a_j \lambda_{N,j}^{-1/2} \psi_{N,j}, \quad \psi_{N,j}= \lambda_{N,j}^{-1/2} K_N \phi_{N,j}, \quad K_N^* \psi_{N,j} = \lambda_{N,j}^{1/2} \phi_{N,j}.
\end{equation}
Note that, in general, $ J_N \subseteq \{ 1, \ldots, N\} $, and equality holds if Assumption~\ref{assump:A2} is satisfied. In that case, $D(T_N) = L^2(\mu_N)$. In the case of $T := T_\mu$, the analogous expressions to~\eqref{eqn:def:Nystrom_data} read
\[ Tf = \sum_{j \in J} a_j \lambda_{j}^{-1/2} \psi_{j}, \quad \psi_{N,j}= \lambda_{N,j}^{-1/2} K_N \phi_{N,j}, \quad K_N^* \psi_{N,j} = \lambda_{N,j}^{1/2} \phi_{N,j}, \]
where $ f = \sum_{j \in J} a_j \phi_j \in D(T)$, and $ J := J_\mu$. Under Assumption~\ref{assump:A2}, $T$ is a densely-defined, unbounded operator.

\paragraph{Spectral convergence}The eigenfunctions $\phi_j \in L^2(\mu)$ and $ \phi_{N,j} \in L^2(\mu_N)$ corresponding to nonzero eigenvalues have extensions in $\RKHS$, and thus in $C^0(M)$, given by 
\begin{equation}\label{eqPhiC0}
\varphi_j = T \phi_j = \lambda_j^{-1/2}\psi_j, \quad \varphi_{N,j} = T_N \phi_{N,j} = \lambda_{N,j}^{-1/2}\psi_{N,j}, 
\end{equation}
respectively. Let now $W_j$ and $W_{N,j}$ be the (finite-dimensional) eigenspaces of $G$ and $ G_N$ corresponding to strictly positive eigenvalues $\lambda_j $ and $\lambda_{N,j}$, respectively. The following lemma, which is based on \citep[][Theorem~15]{VonLuxburgEtAl08} and \citep[][Corrolary~2]{DasGiannakis_delay_Koop}, establishes a convergence result for these functions in the large-data limit in $C^0(X)$ norm.

\begin{lemma}\label{lemSpecConv} 
Let Assumptions~\ref{assump:A1} and~\ref{assump:A2} hold. Then, there exists a set $X'\subseteq X$ with $\mu$-measure $1$, such that for every starting state $x_0\in X'$, the following hold: 
\begin{enumerate}[(i)]
\item For each nonzero eigenvalue $ \lambda_j $ of $ G $, $\lambda_{N,j}$ converges to $\lambda_j$ as $N\to \infty$. 
\item For every eigenfunction $ \phi_j \in W_j$, there exist eigenfunctions $ \phi_{N,j} \in W_{N,j} $ such that their continuous representatives $ \varphi_j $ and $ \varphi_{N,j} $, respectively, satisfy $ \lim_{N\to\infty} \lVert \varphi_{N,j} - \varphi_j \rVert_{C^0(X)} = 0$. 
\item For each $N\in\num$, the $L^2(\mu)$ projection of the continuous function $\varphi_{N,j}$ onto $W_j$ has a continuous representative $\tilde{\varphi}_{N,j}$, and $\lim_{N\to\infty} \left\| \varphi_{N,j} - \tilde{\varphi}_{N,j} \right\|_{C^0(X)} = 0$.
\end{enumerate}
\end{lemma}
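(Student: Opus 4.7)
The plan is to lift both $G$ and $G_N$ to a common Hilbert space on which perturbation theory applies cleanly. To this end I introduce the companion operators $\tilde G := K K^*$ and $\tilde G_N := K_N K_N^*$ on $\RKHS$, which share their nonzero spectra with $G$ and $G_N$, respectively, with eigenfunctions $\psi_j$ and $\psi_{N,j}$ from \eqref{eqn:def:Psi} and \eqref{eqn:def:Nystrom_data}. For $f\in\RKHS$, $\tilde G f = \int_X k(\cdot,y) f(y)\,d\mu(y)$ and $\tilde G_N f = \int_X k(\cdot,y) f(y)\,d\mu_N(y)$, so the comparison $\tilde G_N \leftrightarrow \tilde G$ reduces to controlling integrals of a common operator-valued integrand against $\mu_N$ versus $\mu$.

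The heart of the argument is to show $\lVert \tilde G_N - \tilde G \rVert_{\mathrm{HS}} \to 0$ for $\mu$-a.e.\ $x_0$. Expressing each operator as a weighted sum of the rank-one maps $f\mapsto \langle k(\cdot,y),f\rangle_\RKHS k(\cdot,y) = f(y)\,k(\cdot,y)$ and using $\langle k(\cdot,y), k(\cdot,y')\rangle_\RKHS = k(y,y')$, a direct trace computation yields
\[
\lVert \tilde G_N - \tilde G \rVert_{\mathrm{HS}}^2 = \iint_{X\times X} k(y,y')^2 \, d(\mu_N - \mu)(y)\, d(\mu_N - \mu)(y').
\]
It suffices to establish the mixed limit $\iint k^2\, d\mu_N\, d\mu \to \iint k^2\, d\mu\, d\mu$ and the double limit $\iint k^2\, d\mu_N\, d\mu_N \to \iint k^2\, d\mu\, d\mu$. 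The mixed limit is Birkhoff's theorem applied to the continuous function $y\mapsto \int k(y,y')^2\, d\mu(y')$. For the double limit, I would set $h_N(y') := N^{-1}\sum_{n=0}^{N-1} k(x_n,y')^2$ and $h(y') := \int k(y,y')^2 \, d\mu(y)$, and show $h_N \to h$ uniformly on $X$: pointwise Birkhoff convergence at a countable dense subset of $X$ defines a single $\mu$-full set, and uniform continuity of $k^2$ on $X\times X$ yields equicontinuity of $\{h_N\}$, so pointwise convergence upgrades to uniform. The double limit then follows by splitting $\int h_N\, d\mu_N = \int (h_N - h)\, d\mu_N + \int h\, d\mu_N$, controlling the first term by $\lVert h_N - h\rVert_{C^0(X)}$ and applying Birkhoff to $h$ in the second.

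Once HS (and thus operator-norm) convergence of $\tilde G_N \to \tilde G$ is in hand, standard perturbation theory for compact self-adjoint operators yields (i) and (ii). Each nonzero eigenvalue $\lambda_j$ of $\tilde G$ is isolated with finite-dimensional eigenspace, and the Riesz spectral projectors $P_{N,j} = (2\pi i)^{-1}\oint_{\Gamma_j}(zI - \tilde G_N)^{-1}\,dz$, associated with a small contour $\Gamma_j$ enclosing only $\lambda_j$, converge in norm to the projector $P_j$ onto the corresponding eigenspace of $\tilde G$. This forces $\dim \ran P_{N,j} = \dim \ran P_j$ for $N$ large and all eigenvalues of $\tilde G_N$ inside $\Gamma_j$ to converge to $\lambda_j$, giving (i). For each $\psi_j \in \ran P_j$, the normalized vectors $\psi_{N,j} := P_{N,j}\psi_j / \lVert P_{N,j}\psi_j\rVert_\RKHS$ are eigenvectors of $\tilde G_N$ with $\psi_{N,j} \to \psi_j$ in $\RKHS$. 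Using the inequality $\lVert \cdot \rVert_{C^0(X)} \leq \lVert k\rVert_{C^0(X\times X)}^{1/2}\lVert \cdot \rVert_\RKHS$ on $\RKHS(X)$ recorded in Section~\ref{sect:theory_rkhs}, this convergence transfers to $C^0(X)$, and rescaling via $\varphi_{N,j} = \lambda_{N,j}^{-1/2}\psi_{N,j}$ and $\varphi_j = \lambda_j^{-1/2}\psi_j$ (using (i) to control the scaling factor) delivers (ii).

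For (iii), I would fix an orthonormal basis $\{\phi_{j,k}\}_k$ of $W_j$ in $L^2(\mu)$ with continuous representatives $\varphi_{j,k}$ (available because $\lambda_j > 0$), so that the $L^2(\mu)$ projection of $\varphi_{N,j}$ onto $W_j$ has the continuous representative $\tilde\varphi_{N,j} = \sum_k \langle \phi_{j,k}, \varphi_{N,j}\rangle_\mu\, \varphi_{j,k}$. Since $\varphi_{N,j}\to \varphi_j \in W_j$ in $C^0(X)$ and hence in $L^2(\mu)$, the coefficients converge to $\langle \phi_{j,k}, \varphi_j\rangle_\mu$, and finite-dimensionality of $W_j$ yields $\tilde\varphi_{N,j}\to \varphi_j$ in $C^0(X)$, matching the limit of $\varphi_{N,j}$. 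The full-measure set $X'$ is obtained as the intersection of the countably many $\mu$-full sets on which Birkhoff's theorem holds for the continuous functions above, indexed by a countable dense test set in $X$ and by a countable enumeration of eigenvalues. The principal obstacle is the double ergodic average: Birkhoff does not deliver it directly on a product flow (which need not be ergodic), and routing it through uniform convergence of single-variable ergodic averages via equicontinuity is what makes the HS estimate hold on a single $\mu$-full set of starting points.
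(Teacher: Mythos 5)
Your overall architecture is sound, and it differs genuinely from what the paper does: the paper does not prove this lemma at all, but imports it from the cited results (von Luxburg et al., Theorem~15, and Das--Giannakis, Corollary~2), which run the argument on $C^0(X)$ via convergence of the empirical integral operators under ergodic sampling. Your route --- lifting $G_N$ and $G$ to $\tilde G_N=K_NK_N^*$ and $\tilde G=KK^*$ on $\RKHS$, proving $\lVert \tilde G_N-\tilde G\rVert_{\mathrm{HS}}\to 0$ for $\mu$-a.e.\ $x_0$, and then invoking Riesz-projector perturbation theory --- is the integral-operator/RKHS approach from the statistical learning literature, with Birkhoff averages plus your equicontinuity upgrade replacing i.i.d.\ concentration in order to handle the double average $N^{-2}\sum_{m,n}k(x_m,x_n)^2$. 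That part of your argument (the Hilbert--Schmidt identity for the signed measure $\mu_N-\mu$, valid since $k$ is real-valued; the single full-measure set built from a countable dense test set; the eigenvalue and projector consequences of operator-norm convergence; and the transfer from $\RKHS$ to $C^0(X)$ through $\lVert f\rVert_{C^0(X)}\leq \lVert k\rVert_{C^0(X\times X)}^{1/2}\lVert f\rVert_{\RKHS}$) is correct and self-contained, and in fact delivers a stronger mode of operator convergence than the citations require.

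The gap is in your treatment of degenerate eigenvalues in (ii) and, as a consequence, in your derivation of (iii). When $\dim W_j>1$ the eigenvalue $\lambda_j$ generically splits under the perturbation, so several distinct eigenvalues of $\tilde G_N$ lie inside $\Gamma_j$; then $P_{N,j}\psi_j/\lVert P_{N,j}\psi_j\rVert_{\RKHS}$ is a combination of eigenvectors belonging to different eigenvalues and is not itself an eigenvector of $\tilde G_N$. Your construction therefore proves (ii) only when $\lambda_j$ is simple, and in a splitting multidimensional eigenspace a generic $\psi_j$ cannot be approximated by genuine eigenvectors of $\tilde G_N$ at all, so (ii) must be read with the simple-eigenvalue caveat or reformulated through projectors. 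More importantly, you prove (iii) by using ``$\varphi_{N,j}\to\varphi_j$ in $C^0(X)$'' to control the coefficients $\langle \phi_{j,k},\varphi_{N,j}\rangle_\mu$; in the degenerate case $\varphi_{N,j}$ need not converge at all --- it can rotate inside the limiting eigenspace as $N$ grows --- so this derivation fails precisely in the situation (iii) is designed for, and it is (iii), not (ii), that the paper later uses via the rotations $\unitary_N$ in Proposition~\ref{prop:correl_A}. The repair stays inside your framework: prove (iii) directly from projector convergence. Since $\psi_{N,j}=P_{N,j}\psi_{N,j}$, you have $\lVert (I-P_j)\psi_{N,j}\rVert_{\RKHS}\leq \lVert P_{N,j}-P_j\rVert\to 0$; and because $\langle \phi_{j,k},f\rangle_\mu=\lambda_j^{1/2}\langle \psi_{j,k},f\rangle_{\RKHS}$ for $f\in\RKHS(X)$ while $\varphi_{j,k}=\lambda_j^{-1/2}\psi_{j,k}$, the continuous representative of the $L^2(\mu)$ projection of $\varphi_{N,j}$ onto $W_j$ coincides with the $\RKHS$ projection $P_j\varphi_{N,j}$. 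Hence $\lVert \varphi_{N,j}-\tilde\varphi_{N,j}\rVert_{\RKHS}=\lambda_{N,j}^{-1/2}\lVert (I-P_j)\psi_{N,j}\rVert_{\RKHS}\to 0$ by part (i), and the $C^0(X)$ bound finishes (iii) without ever invoking (ii).
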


\section{Proof of Theorem \ref{thm:B}}\label{sect:proof_B} 

We will need the following lemma for the proof, which shows that the data-driven finite-dimensional subspaces $\RKHS_N|X$ ``converge'' to the RKHS $\RKHS(X)$. Henceforth, for simplicity of notation we will abbreviate $\RKHS_N|X$ by $\RKHS_N$.

\begin{lemma}\label{lem:RKHS_limit} 
Let the assumptions of Theorem \ref{thm:B} hold. Then, for $\mu$-a.e.\ $x_0\in M$ and every $N\in\num$, the subspace $\RKHS_N \subset \RKHS(X)$ is $N$-dimensional, and $\RKHS_1, \RKHS_2, \ldots $ forms a nested sequence of subspaces of $\RKHS(X)$. Moreover, $\cup_{N\in\num}\RKHS_N$ is dense in $\RKHS(X)$; that is, $\RKHS(X)$ is the closure of the span of $\{k(y,\cdot) : y\in\Orbit\}$.
\end{lemma}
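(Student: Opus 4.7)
The plan is to verify the three claims of Lemma~\ref{lem:RKHS_limit} in the order they are stated, all of which follow from a combination of Assumption~\ref{assump:A2} and properties of the orbit $\Orbit$ established in Lemma~\ref{lem:ergdc_samplng}.

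First, I would establish the $N$-dimensionality of $\RKHS_N$. By the discussion following Assumption~\ref{assump:A1}, for $\mu$-a.e.\ $x_0\in M$ the points $x_0,\ldots,x_{N-1}$ are distinct, so Assumption~\ref{assump:A2}(ii) (strict positive-definiteness of $k|X\times X$) applies. Consequently, for any scalars $a_0,\ldots,a_{N-1}\in\cmplx$ not all zero,
\[ \Bigl\lVert \sum_{i=0}^{N-1} a_i\, k(x_i,\cdot) \Bigr\rVert_{\RKHS(X)}^2 = \sum_{i,j=0}^{N-1} a_i^* a_j\, k(x_i,x_j) > 0, \]
so the kernel sections $k(x_0,\cdot),\ldots,k(x_{N-1},\cdot)$ are linearly independent in $\RKHS(X)$. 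Hence $\dim \RKHS_N = N$. The nestedness $\RKHS_N \subseteq \RKHS_{N+1}$ is immediate, since the spanning family of $\RKHS_N$ is a subset of that of $\RKHS_{N+1}$.

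The heart of the lemma is the density claim. By the Moore--Aronszajn construction, $\RKHS(X)$ is the $\RKHS$-norm closure of $\spn\{k(y,\cdot) : y\in X\}$, so it suffices to show that every kernel section $k(y,\cdot)$ with $y\in X$ can be approximated in $\RKHS$ norm by elements of $\spn\{k(z,\cdot):z\in\Orbit\}$. Apply Lemma~\ref{lem:ergdc_samplng}(iii) to select $x_0$ in a full-measure set for which $\Orbit$ is dense in $X$, and choose a sequence $y_n\in \Orbit$ with $y_n\to y$. A direct expansion using the reproducing property gives
\[ \lVert k(y_n,\cdot) - k(y,\cdot) \rVert_{\RKHS}^2 = k(y_n,y_n) - k(y_n,y) - k(y,y_n) + k(y,y), \]
and by Assumption~\ref{assump:A2}(i) (continuity of $k$ on $X\times X$) the right-hand side tends to $0$ as $n\to\infty$. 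Hence $k(y,\cdot)$ lies in the $\RKHS$-norm closure of $\spn\{k(z,\cdot):z\in\Orbit\}=\overline{\bigcup_N \RKHS_N}$, completing the proof.

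There is no substantive obstacle here; the argument is essentially bookkeeping with the defining properties of an RKHS. The only delicacy is ensuring that the full-measure sets where (a) the orbit points are distinct and (b) $\Orbit$ is dense in $X$ can be chosen simultaneously, which is handled by the countable intersection of these two $\mu$-a.s.\ conditions provided by Assumption~\ref{assump:A1} and Lemma~\ref{lem:ergdc_samplng}(iii).
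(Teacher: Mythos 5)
Your proof is correct. The first claim (dimension and nestedness) is handled exactly as in the paper: distinctness of the orbit points plus strict positive-definiteness gives linear independence of the kernel sections; you merely write out the norm computation that the paper leaves implicit. For the density claim, however, you take a genuinely different route. The paper argues by duality: it sets $W=\overline{\spn\{k(y,\cdot):y\in\Orbit\}}$ and shows $W^\perp=\{0\}$, since any $f\perp W$ satisfies $f|\Orbit\equiv 0$, and continuity of elements of $\RKHS(X)$ (a consequence of Assumption~\ref{assump:A2}(i)) together with density of $\Orbit$ from Lemma~\ref{lem:ergdc_samplng}(iii) forces $f\equiv 0$. You instead argue constructively: by Moore--Aronszajn it suffices to approximate each section $k(y,\cdot)$, $y\in X$, and the identity
\[
\lVert k(y_n,\cdot)-k(y,\cdot)\rVert_{\RKHS}^2=k(y_n,y_n)-k(y_n,y)-k(y,y_n)+k(y,y)
\]
together with joint continuity of $k$ on $X\times X$ shows that the feature map $y\mapsto k(y,\cdot)$ is norm-continuous, so sections at orbit points converge to $k(y,\cdot)$. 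Both arguments rest on the same two ingredients (Assumption~\ref{assump:A2}(i) and the a.s.\ density of the orbit), and your handling of the intersection of the two full-measure sets is also appropriate; the trade-off is that your approach produces explicit approximants in $\cup_N\RKHS_N$, whereas the paper's orthocomplement argument is slightly shorter and reuses the already-noted embedding $\RKHS(X)\subset C^0(X)$ rather than continuity of the feature map.
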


\begin{proof} First, note that since $\RKHS_N = \spn \{ k(x_0,\cdot), \ldots, k(x_{N-1},\cdot) \} $, it is clear that $\RKHS_N\subseteq \RKHS_{N+1}$. Moreover since the points $x_0, \ldots, x_{N-1}$ are all distinct for $\mu$-a.e.\ $x_0 \in M$, by Assumption \ref{assump:A2}(ii), the $N$ kernel sections $k(x_0,\cdot), \ldots, k(x_{N-1},\cdot)$ are linearly independent. Therefore, for all such $x_0$, $\RKHS_N$ is $N$-dimensional, and $\RKHS_N \subset \RKHS_{N+1}$.

Next, consider the closed subspace $W = \overline{\cup_{N\in\num}\RKHS_N} = \overline{\spn\{k(y,\cdot) : y\in\Orbit\}}$, where closure is taken with respect to $\RKHS$ norm. It has to be shown that $W=\RKHS(X)$, or equivalently that $W^\bot=\{0\}$. But $f\in W^\bot$ iff for every $y\in\Orbit$, $\langle k(y,\cdot),f\rangle_\RKHS=0$. Since $k | X\times X$ is the reproducing kernel of $\RKHS(X)$, this is equivalent to saying that $f|\Orbit\equiv 0$. Now, by Lemma~\ref{lem:ergdc_samplng}(iii), for $\mu$-a.e.\ $x_0\in M$, the orbit $\Orbit$ is dense in $X$, and thus, for every such $x_0$, $f$ vanishes on a dense subset of $X$. However, by Assumption~\ref{assump:A2}(i), we have $\RKHS(X) \subset C^0(X) $, and thus $f$ is continuous and equal to 0 on the entire space $X$. This shows that $W^\bot=\{0\}$, as claimed.
\end{proof}

Theorem \ref{thm:B} is now ready to be proved. 

\paragraph{Proof of Claim (i)} We will first express $f|X_N$ as the result of applying a combination of operators from $\RKHS(X)$ to $L^2(\mu_N)$. Let $P_N:C^0(X)\to L^2(\mu_N)$ be the restriction map, satisfying $P_N f(x_n) = f(x_n)$. Let also $P:C^0(X)\to L^2(\mu)$ be the canonical inclusion map. As stated above, because the kernel $k$ is continuous, $\RKHS(X)$ is a subspace of $C^0(X)$, and therefore there exist inclusion maps $\iota : \RKHS(X)\to C^0(X)$ and $\iota_N:\RKHS_N\to C^0(X)$. The commutative diagram below shows how $P_N$, $\iota$, $ \iota_N $, and $T_N$ are related: 
\begin{equation}\label{eqn:TPj_commute}
\begin{tikzcd}[column sep=large]
L^2(\mu_N) \arrow{r}{T_N} \arrow{d}{\Id} & \RKHS_N \arrow{d}{\iota_N} \arrow{r}{\subset} &\RKHS(X) \arrow{dl}{\iota} \\
L^2(\mu_N) & C^0(X) \arrow{l}{P_N} 
\end{tikzcd}.
\end{equation}

Since $\RKHS_N$ is a finite-dimensional and hence closed subspace of $\RKHS(X)$, there exists an orthogonal projection $\pi_N:\RKHS(X) \to\RKHS(X)$ with $ \ran \pi_N = \RKHS_N $. By Lemma~\ref{lem:RKHS_limit}, for $\mu$-a.e.\ $x_0 \in M$, $\RKHS_N$ is a sequence of nested subspaces whose union is dense in $\RKHS$, and therefore, for every such $x_0 $, 
\begin{equation}\label{eqn:iN_piN}
\lim_{N\to\infty}\| \pi_N f-f\|_{\RKHS} =0,\quad \forall f\in\RKHS(X).
\end{equation}
Next, observe that for every $f\in\RKHS(X)$, $f|X_N = P_N \iota f$, and therefore $T_N ( f | X_N ) = T_N P_N \iota f$. Thus, Claim~(i) will be proved if it can be shown that the map $\pi'_N =T_N P_N \iota :\RKHS(X) ~\to~\RKHS(X)$ is the same as the orthogonal projection $\pi_N$, for, in that case, $\| \pi_N f\|_{\RKHS}$ and thus $\| T_N f|X_N\|_{\RKHS}$ will converge to $\|f\|_{\RKHS}$ by~\eqref{eqn:iN_piN}. 

To prove that $ \pi'_N = \pi_N $, it will be first shown that $\pi'_N$ is an idempotent operator, i.e., $\pi'_N \pi'_N = \pi'_N$. Indeed, by \eqref{eqn:TPj_commute}, $P_N \iota T_N$ is the identity map on $L^2(\mu_N)$, and therefore 
\[\pi'_N \pi'_N = T_NP_N \iota T_N P_N \iota = T_N (P_N \iota T_N) P_N \iota = T_N P_N \iota = \pi'_N.\]
Second, it will be shown that the range of $\pi'_N-I$ is orthogonal to $\RKHS_N$. Again by \eqref{eqn:TPj_commute}, 
\[ (P_N \iota) \circ \pi'_N = P_N \iota T_N P_N \iota \equiv P_N \iota,\]
which implies that for every $ g \in \RKHS(X) $ and $ n \in \{ 0, \ldots, N - 1 \} $, $(\pi'_N g)(x_n) = g(x_n)$, and therefore 
\[\langle \pi'_N g-g, k(\cdot,x_n) \rangle_{\RKHS} = (\pi'_N g)(x_n) - g(x_n) = 0\]
Now, since $\RKHS_N$ is spanned by $ \{ k(x_0,\cdot), \ldots, k( x_{N-1}, \cdot ) \} $, $\pi'_N g-g$ is orthogonal to $\RKHS_N$, and therefore $\pi'_N$ is an orthogonal projection into $\RKHS_N$, as claimed. This completes the proof of Claim(i). 
\qed

\paragraph{Proof of Claim (ii)} Under the assumptions of the claim, $ \{ T_N( f | X_N ) : N\in\num \} $ is a bounded sequence in $\RKHS(X)$. Therefore, because every bounded sequence in a Hilbert space has a weakly convergent subsequence, $T_N( f | X_N ) $ has a weakly convergent subsequence. The following proposition completes the proof. \qed

\begin{proposition}\label{prop:D}
Under Assumption~\ref{assump:A2}, if $f:\Orbit \to\cmplx$ is such that the sequence of functions $h_N= T_N\left( f|X_N \right)$ has a weakly-convergent subsequence in $\RKHS(X)$, then $f$ has a unique extension to $\RKHS(X)$.
\end{proposition}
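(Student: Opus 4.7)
} The strategy is to extract a weak limit, identify it with $f$ on $\Orbit$ using the reproducing property, and then promote this to a genuine extension by invoking density of $\Orbit$ in $X$ together with continuity of elements of $\RKHS(X)$.

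First I would fix a weakly convergent subsequence $h_{N_k} \rightharpoonup h$ in $\RKHS(X)$, whose existence is granted by hypothesis, and denote its weak limit by $h \in \RKHS(X)$. The key interpolation fact, which follows from the construction of the Nystr\"om extension operator in Section~\ref{sect:theory_rkhs} (specifically from $K_N^* T_N f = f$ and the fact that $K_N^*$ is the inclusion $C^0(X) \hookrightarrow L^2(\mu_N)$), is that $h_N(x_n) = f(x_n)$ for all $n \in \{0,\ldots, N-1\}$. Now fix an arbitrary point $y = x_m \in \Orbit$. By the reproducing property of $\RKHS(X)$ (legitimate by Assumption~\ref{assump:A2}(i), which guarantees $k(y,\cdot) \in \RKHS(X)$), weak convergence yields
\begin{equation*}
h(y) = \langle k(y,\cdot), h \rangle_{\RKHS} = \lim_{k\to\infty} \langle k(y,\cdot), h_{N_k} \rangle_{\RKHS} = \lim_{k\to\infty} h_{N_k}(y).
\end{equation*}
For all $k$ large enough that $N_k > m$, the interpolation identity gives $h_{N_k}(y) = f(x_m) = f(y)$, and thus $h(y) = f(y)$. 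Since $y \in \Orbit$ was arbitrary, $h$ agrees with $f$ on the entire orbit, so $h \in \RKHS(X)$ is an extension of $f$.

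For uniqueness, I would argue as follows: if $h_1, h_2 \in \RKHS(X)$ both extend $f$, then $g := h_1 - h_2 \in \RKHS(X)$ vanishes on $\Orbit$. By Assumption~\ref{assump:A2}(i) and the general RKHS theory reviewed in Section~\ref{sect:theory_rkhs}, $g$ is continuous on $X$, and since $\Orbit$ is dense in $X$ by the hypothesis of Theorem~\ref{thm:B}, the continuous function $g$ must vanish identically, forcing $h_1 = h_2$. There is no real obstacle in this argument; the only subtle point is making sure that the interpolation property $h_N(x_n)=f(x_n)$ is legitimately invoked pointwise (rather than merely in $L^2(\mu_N)$), which is justified because $T_N$ maps into $\RKHS \subseteq C^0(M)$ and kernel sections $k(x_n,\cdot)$ are themselves pointwise-defined, so the evaluation $\langle k(x_n,\cdot), h_N\rangle_{\RKHS}$ is literal pointwise evaluation.
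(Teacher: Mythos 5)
Your proposal is correct and follows essentially the same route as the paper's proof: extract the weak limit $h$, use the reproducing property together with weak convergence and the interpolation identity $h_N(x_n)=f(x_n)$ (for $N>n$) to conclude $h|\Orbit = f$, and obtain uniqueness from continuity of $\RKHS(X)$ functions plus density of $\Orbit$ in $X$. The only difference is cosmetic: you spell out the interpolation property via $K_N^* T_N f = f$, which the paper subsumes under ``by definition of $T_N$''.
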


\begin{proof} Let $\left( h_{N_j} \right)_{j=0}^{\infty}$ be such a weakly-convergent subsequence, and $h$ its weak limit. Note that by definition of $ T_N $, for fixed $ x_n \in \mathcal{O} $ and every $N>n$, $h_N(x_n)$ is constant and equal to $f(x_n)$. Therefore, by definition of weak convergence, 
\[ h(x_n) = \langle k(x_{n},\cdot),h \rangle_{\RKHS} = \lim_{j\to\infty} \langle k(x_n,\cdot), h_{N_j} \rangle_{\RKHS} = \lim_{j\to\infty} h_{N_j}(x_n) = f(x_n), \quad \forall n \in \{ 0,1,2,\ldots \},\]
which shows that $h$ is an extension of $f$ to $\RKHS(X)$. The uniqueness of $ h $ follows from the fact that it is continuous (since $\RKHS(X) \subset C^0(X) $), and $ \mathcal{O} $ is dense.
\end{proof}

As a side note, we will mention the following corollary of Lemma \ref{lem:RKHS_limit}.

\begin{cor}\label{corRKHS}
For every $t \in\real$, the space $\RKHS\circ\Phi^t$ obtained by composing every element of $\RKHS$ by the flow $\Phi^t$, is an RKHS with reproducing kernel $ k^{(t)}:M\times M\to\real$, $ k^{(t)}(x,y) = k\left( \Phi^t(x), \Phi^t(y) \right) $. In particular, if the kernel $k$ is invariant under the flow, i.e., if for every $t\in\real$, $k \equiv k^{(t)}$, then $\RKHS\circ\Phi^t = \RKHS$.
\end{cor}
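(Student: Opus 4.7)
The plan is to transport the Hilbert space structure of $\RKHS$ along the composition map and then verify that $k^{(t)}$ satisfies the four axioms of a reproducing kernel listed in Section~\ref{sec:intro}. Concretely, define the composition operator $C_t : \RKHS \to \RKHS \circ \Phi^t$ by $C_t f = f \circ \Phi^t$. Because $\Phi^t : M \to M$ is a bijection, $C_t$ is a linear bijection onto its image. I would equip $\RKHS \circ \Phi^t$ with the inner product $\langle C_t f, C_t g \rangle_{\RKHS \circ \Phi^t} := \langle f, g \rangle_{\RKHS}$, which is well-defined by injectivity of $C_t$ and makes $C_t$ a unitary isomorphism; completeness of $\RKHS \circ \Phi^t$ follows immediately from completeness of $\RKHS$.

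Next I would check that $k^{(t)}$ is a reproducing kernel for this space. Conjugate symmetry of $k^{(t)}$ and positive-definiteness are inherited from $k$ applied at the points $\Phi^t(x_1),\ldots,\Phi^t(x_n)$. For any $x \in M$, the section $k^{(t)}(x,\cdot) = k(\Phi^t(x), \Phi^t(\cdot)) = C_t\bigl( k(\Phi^t(x),\cdot)\bigr)$ belongs to $\RKHS \circ \Phi^t$ since $k(\Phi^t(x),\cdot) \in \RKHS$. The reproducing property then follows by a one-line calculation: for $g = C_t f \in \RKHS \circ \Phi^t$,
\begin{equation*}
\langle k^{(t)}(x,\cdot), g \rangle_{\RKHS\circ\Phi^t} = \langle C_t(k(\Phi^t(x),\cdot)), C_t f\rangle_{\RKHS\circ\Phi^t} = \langle k(\Phi^t(x),\cdot), f\rangle_{\RKHS} = f(\Phi^t(x)) = g(x).
\end{equation*}
In particular, the point-evaluation functionals on $\RKHS \circ \Phi^t$ are bounded, so $\RKHS \circ \Phi^t$ is indeed an RKHS and $k^{(t)}$ is its reproducing kernel.

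For the second assertion, I would invoke the Moore-Aronszajn theorem quoted in the paper: an RKHS is uniquely determined by its reproducing kernel. If $k \equiv k^{(t)}$ as functions on $M\times M$, then $\RKHS$ and $\RKHS \circ \Phi^t$ are both RKHSs with reproducing kernel $k$, and uniqueness forces $\RKHS \circ \Phi^t = \RKHS$ as sets of functions with identical norms.

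There is no real obstacle in this argument; the one subtlety worth emphasizing is that the transported inner product on $\RKHS \circ \Phi^t$ is \emph{defined} through $C_t$ rather than being inherited a priori from $\RKHS$, so one must check that it yields a genuine Hilbert space whose norm agrees with the RKHS norm generated by $k^{(t)}$. This agreement is automatic once the reproducing property above is established, because the span of kernel sections $\{k^{(t)}(x,\cdot) : x \in M\}$ is the image under $C_t$ of $\spn\{k(y,\cdot) : y \in M\}$, which is dense in $\RKHS$ (and hence dense in $\RKHS \circ \Phi^t$ under the transported norm).
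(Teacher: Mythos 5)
Your proof is correct, but it takes a genuinely different route from the paper's. You give the standard pullback-kernel construction: transport the inner product of $\RKHS$ through the composition operator $C_t f = f \circ \Phi^t$ (a bijection since $\Phi^t$ is invertible), verify the kernel axioms and the reproducing property for $k^{(t)}$ directly, and then settle the flow-invariant case by the uniqueness part of the Moore--Aronszajn theorem, since $\RKHS$ and $\RKHS \circ \Phi^t$ would then both be RKHSs with the same kernel $k$. The paper instead routes the first assertion through its dynamical machinery: it introduces the finite-dimensional spans $\RKHS_N^{(t)}$ generated by the backward-shifted trajectory, notes $\RKHS_N \circ \Phi^t = \RKHS_N^{(t)}$, and passes to the limit via the density statement of Lemma~\ref{lem:RKHS_limit}; for the flow-invariant case it computes the $\RKHS$ norm of kernel-section expansions $f = \sum_i a_i k(y_i,\cdot)$ to show $\RKHS \circ \Phi^t \subseteq \RKHS$ and then replaces $t$ by $-t$. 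Your argument buys generality and cleanliness: it needs no dynamics, no ergodicity, no $\mu$-a.e.\ dense orbit, and no restriction to $X$ (the density lemma only concerns $\RKHS(X)$ and holds for a.e.\ $x_0$, caveats the corollary itself does not require), and it works for any bijection of $M$; it also sidesteps the paper's implicit limiting step in writing a general $f \in \RKHS$ as an infinite sum of kernel sections, which strictly holds only on a dense subspace. What the paper's route buys is coherence with the rest of the exposition, exhibiting the corollary as a by-product of the trajectory-based subspace convergence that drives Theorem~\ref{thm:B}, and a concrete norm identity for kernel-section expansions in the flow-invariant case. Your closing remark about the transported norm agreeing with the $k^{(t)}$-RKHS norm, via density of the image of the kernel-section span under the unitary $C_t$, correctly addresses the only subtle point.
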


\begin{proof} Let $\RKHS^{(t)}$ be the RKHS with reproducing kernel $k^{(t)}$, and for every $N\in\num$, let $\RKHS_{N}^{(t)}$ be the subspace of $\RKHS^{(t)}$ generated by the trajectory $\{ \Phi^{-t}(x_n) : n \in \{ 0,\ldots,N-1\} \}$, analogously to $\RKHS_{N}$. Then, 
\[\RKHS_N \circ \Phi^{t} = \RKHS_{N}^{(t)},\]
which, in conjunction with Lemma~\ref{lem:RKHS_limit}, implies that $ \RKHS \circ \Phi^{t} = \RKHS^{(t)}$. Now assume that $k$ is flow-invariant. In that case, because $f\in\RKHS$ is a sum of the form $f=\sum_{i\in\num} a_i k(y_i,\cdot)$, for any $t\in\real$,
\[ \| f\|_{\RKHS}^2 = \sum_{i,j\in\num} a^*_i a_j k(y_i, y_j) = \sum_{i,j\in\num} a_i^* a_j k(\Phi^{-t}(y_i), \Phi^{-t}(y_j)) = \left\| \sum_{i\in\num} a_i k(\Phi^{-t}(y_i),\cdot) \right\|_{\RKHS}^2,\]
and we conclude that $\tilde{f} = \sum_{i\in\num} a_i k(\Phi^{-t}(y_i),\cdot) $ lies in $ \RKHS$. However, 
\[\tilde{f}(x) = \sum_{i\in\num} a_i k(\Phi^{-t}(y_i),x) = \sum_{i\in\num} a_i k(y_i, \Phi^{t}(x) ) = f\circ\Phi^t(x), \]
which shows that $f\circ\Phi^t = \tilde{f}$ also lies in $\RKHS$, i.e., $\mathcal{H}\circ\Phi^t \subseteq \mathcal{H}$. The fact that $\mathcal{H} \circ \Phi^t = \mathcal{H}$ follows by replacing $t$ by $-t$ in the last inclusion.
\end{proof}

By Corollary \ref{corRKHS}, one of the consequences of having a flow-invariant kernel is that one can define a group of Koopman operators $ U^t : \mathcal{H } \to \mathcal{ H } $, $ t\in \real $, acting on the corresponding RKHS, $ \RKHS$. Flow-invariant kernels are a more special class of kernels, as they incorporate information about the underlying dynamics. One way of realizing them in a data-driven environment is as sequences of kernels operating on delay-embedded data with an increasing number of delays \cite{DasGiannakis_delay_Koop}. It was shown in \cite{DasGiannakis_delay_Koop} that the kernel integral operators associated with such kernels have common eigenspaces with the unitary Koopman operators on $ L^2(\mu) $. Other studies on Koopman operators on RKHSs, such as \cite{Kawahara2016,KlusEtAl2017}, make the strong assumption that the RKHS is Koopman-invariant. We do not know of any example of such spaces, other than the case of flow-invariant kernels in Corollary~\ref{corRKHS}.

\section{Proof of Theorem \ref{thm:D}} \label{sect:proof_D}

We begin with a lemma which shows that the projection of an $L^2(\mu)$ vector onto a Koopman eigenspace is also the limit of an exponentially weighted Birkhoff average of the function. \blue{Recall that due to the ergodicity of $\mu$, for every eigenfrequency $\omega$, $e^{i \omega\,\Delta t}$ is an eigenvalue for $U^{\Delta t}$ with multiplicity $1$, and thus there exists a nonzero orthogonal projection $\pi_\omega$ onto this eigenspace. If $\omega$ is not an eigenfrequency, then $\pi_\omega$ will be defined as the zero operator.}

\begin{lemma}\label{lem:erg_Fouri}
\blue{ Let Assumption \ref{assump:A1} hold. Then,} the orthogonal projection $\pi_\omega$ is given by the $L^2(\mu)$ limit
\[ \lim_{N\to\infty}\frac{1}{N}\sum_{n=0}^{N-1} e^{-i\omega n\,\Delta t}U^{n\,\Delta t}f = \pi_\omega f, \quad \forall f\in L^2(\mu). \]
\end{lemma}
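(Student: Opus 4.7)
The identity is, at its core, von Neumann's mean ergodic theorem applied to a modulated Koopman operator, so I would structure the argument in three short steps.

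First, I would absorb the exponential weight into the operator by introducing $V_\omega := e^{-i\omega\,\Delta t}\,U^{\Delta t}$ acting on $L^2(\mu)$. Since $U^{\Delta t}$ is unitary and the scalar $e^{-i\omega\,\Delta t}$ has modulus one, $V_\omega$ is also unitary, and its powers satisfy $V_\omega^n = e^{-i\omega n\,\Delta t}\,U^{n\,\Delta t}$. Hence the sum in the statement can be rewritten as the ordinary Cesàro average
\[
\frac{1}{N}\sum_{n=0}^{N-1} e^{-i\omega n\,\Delta t}\,U^{n\,\Delta t}f \;=\; \frac{1}{N}\sum_{n=0}^{N-1} V_\omega^n f .
\]

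Second, I would invoke the mean ergodic theorem for the unitary operator $V_\omega$ on the Hilbert space $L^2(\mu)$: the Cesàro averages of $V_\omega$ converge in the strong operator topology to the orthogonal projection $P_\omega$ onto the fixed-point subspace $\ker(V_\omega - I)$. Because $V_\omega f = f$ is equivalent to $U^{\Delta t} f = e^{i\omega\,\Delta t} f$, this kernel coincides with the eigenspace of $U^{\Delta t}$ at eigenvalue $e^{i\omega\,\Delta t}$, and convergence in strong operator topology on an individual vector $f$ is precisely $L^2(\mu)$ convergence of $N^{-1}\sum_{n=0}^{N-1} V_\omega^n f$ to $P_\omega f$.

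Third, I would identify $P_\omega$ with $\pi_\omega$ as defined in the preamble. If $\omega$ is a Koopman eigenfrequency of the generator $V$, then Assumption~\ref{assump:A1} together with Lemma~\ref{lem:ergdc_samplng} ensures that $e^{i\omega\,\Delta t}$ is a simple eigenvalue of $U^{\Delta t}$ whose eigenspace coincides with the one-dimensional $V$-eigenspace at $\omega$; this is the range of $\pi_\omega$, so $P_\omega = \pi_\omega$. If on the other hand no translate $\omega + 2\pi q/\Delta t$ is an eigenfrequency, then $e^{i\omega\,\Delta t}$ is not in the point spectrum of $U^{\Delta t}$, so $\ker(V_\omega - I) = \{0\}$ and $P_\omega = 0 = \pi_\omega$. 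The potential aliasing case, where $\omega$ is not itself an eigenfrequency but some nonzero translate is, can be ruled out using Lemma~\ref{lem:ergdc_samplng}(ii) and the fact that differences of $V$-eigenfrequencies are again $V$-eigenfrequencies (a standard consequence of ergodicity).

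\emph{Expected obstacle.} The mean ergodic step itself is classical and requires no serious work. The only genuine content is the spectral identification in the last step: one has to use the group structure of the set of $V$-eigenfrequencies together with the Nyquist exclusion of Lemma~\ref{lem:ergdc_samplng}(ii) to check that the map $\omega \mapsto e^{i\omega\,\Delta t}$ is injective on the set of $V$-eigenfrequencies, so that eigenspaces of $V$ and $U^{\Delta t}$ line up cleanly and no unwanted contribution from aliased frequencies leaks into $P_\omega$.
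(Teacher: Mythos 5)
Your first two steps are exactly the paper's own argument: pass to the modulated unitary $\tilde U_\omega = e^{-i\omega\,\Delta t}U^{\Delta t}$, apply the von Neumann mean ergodic theorem, and note that its fixed subspace is $\ker\bigl(U^{\Delta t}-e^{i\omega\,\Delta t}I\bigr)$. The problem is your third step. The aliasing case cannot be ruled out: it is entirely possible that $\omega$ is not a generator eigenfrequency while some translate $\omega + 2\pi q/\Delta t$ with $q\neq 0$ is. Lemma~\ref{lem:ergdc_samplng}(ii) together with the group structure of eigenfrequencies only gives that \emph{at most one} translate of $\omega$ can be an eigenfrequency (equivalently, $\omega'\mapsto e^{i\omega'\,\Delta t}$ is injective on the set of eigenfrequencies, which is what makes the integer $q$ in Theorem~\ref{thm:A}(iii) unique); it does not force $\omega$ itself to be an eigenfrequency whenever a translate is. Concretely, for a circle rotation with frequency $\alpha$ such that $\alpha\,\Delta t/2\pi$ is irrational, $\omega=\alpha+2\pi/\Delta t$ is not an eigenfrequency, yet its translate $\alpha$ is, and the Ces\`aro averages at frequency $\omega$ converge to the nonzero projection onto the eigenspace of $U^{\Delta t}$ at $e^{i\alpha\,\Delta t}=e^{i\omega\,\Delta t}$. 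The entire ``modulo $2\pi q/\Delta t$'' ambiguity in Theorems~\ref{thm:A} and~\ref{thm:D}, and Corollary~\ref{corFreq}, exists precisely because this situation occurs.

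What saves the lemma is not excluding aliasing but the reading of $\pi_\omega$: as it is actually used in the paper (see the proof of Theorem~\ref{thm:D}(ii), where $\pi_\omega$ is nonzero whenever some translate $\omega+2\pi q/\Delta t$ is an eigenfrequency), $\pi_\omega$ is the orthogonal projection onto the eigenspace of the discrete-time operator $U^{\Delta t}$ at eigenvalue $e^{i\omega\,\Delta t}$; this space is nonzero exactly when some translate of $\omega$ is a generator eigenfrequency, and it is one-dimensional by ergodicity of $\Phi^{\Delta t}$ (Assumption~\ref{assump:A1}). With that convention your identification $P_\omega=\pi_\omega$ is immediate in every case and no further argument is needed. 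With the literal convention you adopted ($\pi_\omega=0$ unless $\omega$ itself is an eigenfrequency of $V$), the asserted equality would in fact fail at aliased frequencies, so the ``ruling out'' step you propose is not merely unnecessary --- it cannot be carried out.
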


\begin{proof} Let $I_\omega$ be the subspace of fixed points of the unitary operator $\tilde{U}_\omega= e^{-i\omega \,\Delta t}U^{\Delta t}$ , and $ \proj_{I_\omega} : L^2(\mu) \to L^2(\mu) $ the corresponding orthogonal projection operator. This subspace is \{0\} if $\omega$ is not a Koopman eigenfrequency; otherwise, it is the eigenspace of $U^{\Delta t}$ corresponding to eigenvalue $e^{i\omega\,\Delta t}$, so that $ \proj_{I_\omega} = \pi_\omega $. By the von Neumann mean ergodic theorem \citep[e.g.,][]{Halmos1956}, $\sum_{n=0}^{N-1} \tilde{U}_\omega^n/N $ converges pointwise to $ \proj_{I_\omega} $, and therefore, for any $ f \in L^2(\mu) $,
\[ \lim_{N\to\infty}\frac{1}{N}\sum_{n=0}^{N-1} e^{-i\omega n\,\Delta t}U^{n\,\Delta t}f = \lim_{N\to\infty}\frac{1}{N}\sum_{n=0}^{N-1} \tilde{U}_{\omega}^{n}f = \mbox{proj}_{I_\omega}f = \pi_{\omega} f. \qedhere \]
\end{proof}

We will now proceed to define the quantities $\norm_{N,l}$ introduced in Theorem \ref{thm:D} using Fourier-like averages. Let $\langle \cdot, \cdot \rangle_{\mu_N} $ denote the inner product of $L^2(\mu_N)$. Under the assumptions of the theorem, $ \{ \phi_{N,0}, \ldots, \phi_{N,N-1} \} $ is an orthonormal basis of $L^2(\mu_N)$ consisting of eigenfunctions of $G_N$ with nonzero corresponding eigenvalues, $\lambda_{N,j}$. Therefore, the Fourier function $f_\omega | X_N $ can be expressed as 
\begin{equation} \label{eqFExpansion}
f_{\omega}|X_N = \sum_{j=0}^{N-1}\langle \phi_{N,j}, f_\omega|X_N \rangle_{\mu_N} \phi_{N,j}.
\end{equation}
Thus, by definition of the Nystr\"om extension \eqref{eqn:def:Nystrom_data}, 
\begin{equation} \label{eqNystromFourier}
T_N ( f_{\omega}|X_N ) = \sum_{j=0}^{N-1} \langle f_{\omega}|X_N, \phi_{N,j} \rangle_{\mu_N} \psi_{N,j} / \lambda_{N,j}^{1/2} 
\end{equation}
and
\begin{equation}\label{eqWN}
\norm_N(f_{\omega}) = \left\| T_N ( f_{\omega}|X_N ) \right\|_{\RKHS}^2 = \sum_{j=0}^{N-1} \left|\langle f_{\omega}|X_N, \phi_{N,j} \rangle_{\mu_N} \right|^2 / \lambda_{N,j}.
\end{equation}
It follows from the above that $\norm_N(f_{\omega})$ can be approximated by the sequence of spectrally truncated norms 
\begin{equation}\label{eqn:trunc}
\norm_{N,l}(f_{\omega}) := \sum_{j=0}^{l-1} \left|\langle f_{\omega}|X_N, \phi_{N,j} \rangle_{\mu_N} \right|^2 / \lambda_{N,j}, \quad l \in \{ 1, \ldots, N \}, 
\end{equation}
where the quantity $l$ plays the role of a \emph{spectral resolution parameter}. 
Note that $\norm_{N,N}(f_\omega) = \norm_N(f_\omega) $. 

According to \eqref{eqWN}, $w_N(f_\omega)$ depends on all of the eigenpairs $(\lambda_{N,j},\phi_{N,j})$ of $G_N$. However, for a given $ N $, as $j $ increases the eigenvalues generally become increasingly sensitive to the particular trajectory $X_N$; that is, $ w_N(f_\omega) $ has high sensitivity to sampling errors. On the other hand, $\norm_{N,l}(f_\omega)$ depends on a fixed number $l$ of eigenvalues and eigenfunctions, which converge as $ N \to \infty $ uniformly with respect to $ j \in \{ 0, \ldots, l - 1 \} $ for $\mu$-a.e.\ starting state. This makes $\norm_{N,l}$ more useful from a practical standpoint than $w_N$.

Next, observe that the quantities $\langle \phi_{N,j}, f_\omega|X_N \rangle_{\mu_N}$ in~\eqref{eqFExpansion} are functions of the starting state $x_0 $ in the sampled orbit. The following proposition establishes the limit as $N \to \infty $ of these quantities, as $L^2(\mu)$ functions of $x_0 $.

\begin{proposition}\label{prop:correl_A}
Let $W$ be an eigenspace of the integral operator $G$, spanned by the $m$ eigenfunctions $\phi_k, \ldots, \phi_{k+m-1}$. \blue{Then, the following is true for $\mu$-a.e.\ initial point $x_0$}: Let $\omega\in \real$, and define $z_\omega$ to be a unit-norm eigenfunction if $\omega$ is an eigenfrequency, otherwise set $z_\omega =0$. Then, 
\[ \lim_{N\to\infty} \sum_{j=k}^{k+m-1} \left| \langle f_{\omega}|X_N, \phi_{N,j} \rangle_{\mu_N} \right|^2= \sum_{j=k}^{k+m-1} \left\| \pi_{\omega} \phi_{j} \right\|^2_{\blue{\mu}} = \left\| \proj_W z_\omega \right\|^2_{\blue{\mu}}. \]
Thus, if $\omega$ is not a Koopman eigenfrequency, then for each $j$, $\lim_{N\to\infty} \langle f_{\omega}|X_N, \phi_{N,j} \rangle_{\mu_N}$ is $\mu$-a.e.\ equal to zero. Otherwise, the leftmost sum converges to the squared norm of projection of the $z_\omega$ into the eigenspace $W$.
\end{proposition}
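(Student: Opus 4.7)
The plan is to reduce the quantity $\langle f_\omega|X_N,\phi_{N,j}\rangle_{\mu_N}$ to a Fourier-Birkhoff average of a fixed, $N$-independent observable whose limit can be read off from ergodic theory, and then to sum the squared limits over an orthonormal basis of the eigenspace $W$. First I would rewrite the inner product as
\[
\langle f_\omega|X_N,\phi_{N,j}\rangle_{\mu_N} = \frac{1}{N}\sum_{n=0}^{N-1} e^{-i\omega n\,\Delta t}\,\varphi_{N,j}(x_n),
\]
with $\varphi_{N,j}$ the continuous representative from \eqref{eqPhiC0}. Lemma~\ref{lemSpecConv}(ii) lets me choose, for each $j \in \{k,\ldots,k+m-1\}$ indexing an orthonormal basis of $W$, a data-driven eigenfunction $\phi_{N,j}$ whose continuous representative $\varphi_{N,j}$ converges to $\varphi_j$ in $C^0(X)$ for $\mu$-a.e.\ $x_0$. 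The resulting error from substituting $\varphi_j$ for $\varphi_{N,j}$ inside the average is bounded by $\|\varphi_{N,j}-\varphi_j\|_{C^0(X)}\to 0$, reducing the problem to evaluating
\[
\lim_{N\to\infty} \frac{1}{N}\sum_{n=0}^{N-1} e^{-i\omega n\,\Delta t}\,\varphi_j\!\left(\Phi^{n\,\Delta t}(x_0)\right)
\]
pointwise for $\mu$-a.e.\ $x_0$.

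The second step is to identify this Fourier-Birkhoff limit. The natural tool is the Wiener-Wintner ergodic theorem, which upgrades the $L^2(\mu)$ convergence in Lemma~\ref{lem:erg_Fouri} to a pointwise statement holding simultaneously for all $\omega \in \real$ on a single full-measure set of initial conditions, identifying the limit as $(\pi_\omega \varphi_j)(x_0)$. If $\omega$ is not an eigenfrequency, then $\pi_\omega = 0$ and the inner product vanishes in the limit, giving the last clause of the proposition. If instead $\omega$ is an eigenfrequency with unit-norm eigenfunction $z_\omega$, then $|z_\omega|^2$ is $U^{\Delta t}$-invariant and hence $\mu$-a.e.\ equal to $1$ by ergodicity; combined with the fact that $\pi_\omega$ is the rank-one projection onto $\spn(z_\omega)$, this gives $(\pi_\omega \varphi_j)(x_0) = \langle z_\omega,\phi_j\rangle_\mu\, z_\omega(x_0)$.

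Finally I would square and sum over $j$, using $|z_\omega(x_0)|=1$ $\mu$-a.e., to obtain
\[
\sum_{j=k}^{k+m-1}\left|(\pi_\omega \varphi_j)(x_0)\right|^2 = \sum_{j=k}^{k+m-1} \left|\langle z_\omega,\phi_j\rangle_\mu\right|^2 = \|\proj_W z_\omega\|_\mu^2,
\]
where the last equality is the Pythagorean expansion of $\proj_W z_\omega$ in the orthonormal basis $\{\phi_j\}_{j=k}^{k+m-1}$; the middle expression $\sum_j\|\pi_\omega\phi_j\|_\mu^2$ in the proposition coincides with the same quantity because $\pi_\omega$ is rank one and $\|z_\omega\|_\mu=1$. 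The main obstacle is justifying the pointwise a.e.\ Fourier-Birkhoff convergence at non-eigenfrequencies, since Lemma~\ref{lem:erg_Fouri} only supplies $L^2(\mu)$ convergence; invoking Wiener-Wintner is what makes the limit hold on a full-measure set of $x_0$ that can then be intersected with the full-measure set from Lemma~\ref{lemSpecConv}, so that both the spectral approximation and the averaging step are valid for the same starting point.
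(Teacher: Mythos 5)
Your overall route (rewrite the inner product as an exponential Birkhoff average of a continuous representative, control the substitution error in $C^0(X)$, apply Wiener--Wintner to get the pointwise limit $(\pi_\omega\varphi_j)(x_0)$, then use $|z_\omega(x_0)|=1$ $\mu$-a.e.\ and Pythagoras over the basis of $W$) is the same as the paper's. But there is a genuine gap at the first step: you invoke Lemma~\ref{lemSpecConv}(ii) to ``choose'' data-driven eigenfunctions $\phi_{N,j}$ whose continuous representatives converge to the prescribed basis elements $\varphi_j$ of $W$. The $\phi_{N,j}$ appearing in Proposition~\ref{prop:correl_A} are not at your disposal: they are the fixed orthonormal eigenbasis of $G_N$ used throughout (in particular in \eqref{eqn:trunc}), and the only freedom you have is within the eigenspaces of $G_N$ itself, which for finite data are typically one-dimensional (a phase). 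When the limiting eigenvalue of $G$ is degenerate, i.e.\ $m=\dim W>1$ --- exactly the case the proposition is designed to cover, and the reason Theorem~\ref{thm:D}(ii) works with the cumulative eigenspace dimensions $l_m$ --- the actual data-driven eigenvectors indexed by $j\in\{k,\ldots,k+m-1\}$ converge to $W$ only modulo an $N$-dependent unitary rotation inside $W$; the individual functions $\varphi_{N,j}$ need not converge to $\varphi_j$, and the individual limits $\lim_N\langle f_\omega|X_N,\phi_{N,j}\rangle_{\mu_N}$ need not even exist. Your argument, as written, would prove term-by-term convergence to $\lVert\pi_\omega\phi_j\rVert_\mu^2$, which is false in general for the fixed data-driven basis; only the sum over the whole eigenspace is well behaved.

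The paper closes exactly this hole by routing the argument through Lemma~\ref{lemSpecConv}(iii) instead of (ii): it introduces a unitary $\unitary_N$ on $L^2(\mu)$, equal to the identity on $W^\perp$, with $\lVert\varphi_{N,j}-\unitary_N\varphi_j\rVert_{C^0(X)}\to 0$, and then exploits the facts that $|\pi_\omega\phi(x_0)|=\lVert\pi_\omega\phi\rVert_\mu$ $\mu$-a.e.\ and that $\{\unitary_N\varphi_j\}_{j\in J}$ is still an orthonormal basis of $W$, so that $\sum_{j\in J}\lVert\pi_\omega(\unitary_N\varphi_j)\rVert_\mu^2=\sum_{j\in J}\lVert\pi_\omega\varphi_j\rVert_\mu^2=\lVert\proj_W z_\omega\rVert_\mu^2$ independently of the rotation. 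The remainder of your write-up (the Wiener--Wintner step to obtain an $\omega$-independent full-measure set of initial points --- which the paper also invokes --- and the eigenfrequency/non-eigenfrequency dichotomy) is sound, so the repair is local: replace the alignment claim based on (ii) by the rotation-invariance argument based on (iii), and work with the sum over $j\in\{k,\ldots,k+m-1\}$ from the start rather than with individual terms.
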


\begin{proof} For brevity, we will denote the indices $k,\ldots,k+m-1$ by $J$. For every $N\in\num$ note that both $\{ \varphi_{N,j} : j\in J \}$ and $\{ \varphi_{j} : j\in J \}$ (see \eqref{eqPhiC0}) are orthonormal sets of continuous functions. It then follows by Lemma~\ref{lemSpecConv}(iii), that there exists a unitary map $\unitary_N$ on $L^2(\mu)$ which is an identity on $W^\bot$, such that
\[\lim_{N\to\infty} \left\| \varphi_{N,j} - ( \unitary_N \varphi_j ) \right\|_{C^0(X)} = 0, \quad \forall j\in J .\]
Now, note that for each $j\in J$,
\[\begin{split}
\lim_{N\to\infty} \langle f_{\omega}|X_N, \phi_{N,j} \rangle_{\mu_N} &= \lim_{N\to\infty} \frac{1}{N}\sum_{n=0}^{N-1} e^{-i n\omega\,\Delta t} \phi_{N,j}(x_n) = \lim_{N\to\infty} \frac{1}{N}\sum_{n=0}^{N-1} e^{-i n\omega\,\Delta t} \varphi_{N,j}(x_n) \\
&= \lim_{N\to\infty} \frac{1}{N}\sum_{n=0}^{N-1} e^{-i n\omega\,\Delta t} ( \unitary_N \varphi_j )(x_n) + \lim_{N\to\infty} \frac{1}{N}\sum_{n=0}^{N-1} e^{-i n\omega\,\Delta t} \left( \varphi_{N,j} - ( \unitary_N \varphi_j ) \right) (x_n) \\
&= \lim_{N\to\infty} \frac{1}{N}\sum_{n=0}^{N-1} e^{-i n\omega\,\Delta t} ( \unitary_N \varphi_j )(x_n) . 
\end{split}\]
Lemma \ref{lem:erg_Fouri} states that the operator $\frac{1}{N}\sum_{n=0}^{N-1} e^{-i\omega n \Delta t}$ converges pointwise to the the projection $\pi_\omega$. If $f_N$ is a sequence of functions lying in a bounded, finite dimensional disk, then $\lim_{N\to \infty} \frac{1}{N}\sum_{n=0}^{N-1} e^{-i\omega n \Delta t} f_N(x_n) = \lim_{N\to \infty} \proj_\omega f_N$. In our case, for every $j\in J$ and every $N\in\num$, $\unitary_N \varphi_j$ lies in the unit disk of the finite dimensional space $W$. Thus we can write
\[ \lim_{N\to\infty} \sum_{j\in J} \left| \langle f_{\omega}|X_N, \phi_{N,j} \rangle_{\mu_N} \right|^2 = \lim_{N\to\infty} \sum_{j\in J} \left| \frac{1}{N}\sum_{n=0}^{N-1} e^{-i n\omega\,\Delta t} ( \unitary_N \varphi_j ) (x_n) \right|^2 = \lim_{N\to\infty} \sum_{j\in J} \left| \pi_\omega (\unitary_N \varphi_j)(x_0) \right|^2 \]
The next important realization is the following equality.
\begin{equation}
\left| \pi_\omega(\phi)(x_0) \right| = \left\| \pi_\omega(\phi) \right\|, \quad \forall \phi\in L^2(\mu), \quad \forall \omega\in\real, \quad \mu \mbox{-a.e. } x_0.
\end{equation}
Therefore, for $\mu$-a.e. $x_0$,
\[ \lim_{N\to\infty} \sum_{j\in J} \left| \langle f_{\omega}|X_N, \phi_{N,j} \rangle_{\mu_N} \right|^2 = \lim_{N\to\infty} \sum_{j\in J} \left\| \pi_\omega (\unitary_N \varphi_j)\right\|^2 . \]
But now note that for each $N$, $\{ \unitary_N \varphi_{j} : j\in J \}$ is still an orthonormal basis of $W$, and thus
\[ \sum_{j\in J} \left\| \pi_\omega (\unitary_N \varphi_j) \right\|^2 = \sum_{j\in J} \left| \pi_\omega (\varphi_j)(x_0) \right|^2 = \left\| \proj_W z_\omega \right\|^2 . \]
\blue{Finally, the fact that the full-measure set for $x_0$ can be chosen independently of $\omega$ follows from the Wiener-Wintner theorem \cite{Wiener_Wintner_1941}.} This completes the proof of the proposition.
\end{proof}


Using \eqref{eqn:def:Nystrom}, and introducing the orthogonal projection $ \proj_{N,l} : L^2(\mu_N) \to L^2(\mu_N)$ mapping into $\spn \left\{ \phi_{N,0}, \ldots, \phi_{N,l-1} \right\}$, we define the spectrally truncated Nystr\"om operator $T_{N,l} : L^2(\mu_N) \to L^2(\mu_N)$ as $T_{N,l} = T_N \proj_{N,l}$, i.e.,
\begin{equation}\label{eqn:def:TNl}
T_{N,l} \left(\sum_{j=0}^{N-1} a_j \phi_{N,j} \right) := \sum_{j=0}^{l-1} \lambda_{N,j}^{-1/2} a_j \psi_{N,j} = \sum_{j=0}^{l-1} a_j \varphi_{N,j}.
\end{equation}
The claims of Theorem \ref{thm:D} can now be proved. 

\paragraph{Proof of Claims (i) and (ii)} Since the RHS of \eqref{eqn:trunc} has a fixed, finite number of summands, we have 
\[ \lim_{N\to\infty} \norm_{N,l}(f_{\omega}) = \lim_{N\to\infty} \sum_{j=0}^{l-1} \left|\langle f_{\omega}|X_N, \phi_{N,j} \rangle_{\mu_N} \right|^2 / \lambda_{N,j} = \sum_{j=0}^{l-1} \lim_{N\to\infty} \left|\langle f_{\omega}|X_N, \phi_{N,j} \rangle_{\mu_N} \right|^2 / \lambda_{N,j}.\]
If $\omega + 2\pi q/\Delta t$ is not an eigenfrequency for any integer $q$, then by Proposition \ref{prop:correl_A}, each of the $l$ limits is equal to $0$ $\mu$-a.s., proving Claim~(i). \blue{ For Claim~(ii), choose $l_m$ to be the cumulative dimension of the first $m$ eigenspaces of the kernel integral operator $G$, as constructed in Proposition~\ref{prop:correl_A}. If $\omega + 2\pi q/\Delta t$ is an eigenfrequency for some $q\in\integer$, then the projection $\pi_\omega$ is nonzero, and because the $\phi_j$ form an orthonormal basis of $L^2(\mu)$, $\pi_\omega\phi_j$ is nonzero for some $j = L$. Choose $m$ large-enough so that $l_m>L$. This choice of $m$ and $\epsilon = \lVert \pi_\omega \phi_{L-1} \rVert_\mu$ } suffice for the first part of Claim~(ii). To prove the second part, let $z_{\omega}$ be a unit $L^2(\mu)$-norm Koopman eigenfunction corresponding to eigenfrequency $\omega + 2\pi q /\Delta t$. Then, for $\mu$-a.e.\ $x_0 \in M$, 
\[ T_{N,l} f_\omega = \sum_{j=0}^{l-1} \langle \phi_{N,j}, f_\omega \rangle_{\mu_N} \varphi_{N,j} , \quad K^* T_{N,l} f_\omega = \sum_{j=0}^{l-1} \langle \phi_{N,j}, f_\omega \rangle_{\mu_N} K^* \varphi_{N,j},\]
and it follows by Lemma~\ref{lemSpecConv}(ii) that
\[\begin{split}
\lim_{N\to \infty} K^* T_{N,l} f_\omega & = \lim_{N\to \infty} \sum_{j=0}^{l-1} \langle \phi_{N,j}, f_\omega \rangle_{\mu_N} K^* \varphi_{N,j} = \sum_{j=0}^{l-1} \lim_{N\to \infty} \langle \phi_{N,j}, f_\omega \rangle_{\mu_N} K^* \varphi_{N,j} \\ 
& = \sum_{j=0}^{l-1} \left( \pi_\omega \phi_j \right)(x_0) K^* \varphi_{j} = \sum_{j=0}^{l-1} \left( \pi_\omega \phi_j \right)(x_0) \phi_{j} = \sum_{j=0}^{l-1} \langle z_\omega, \phi_j \rangle_{\mu} z_\omega(x_0) \phi_j \\
& = z_\omega(x_0) \sum_{j=0}^{l-1} \langle z_\omega, \phi_j \rangle_{\mu} \phi_j = z_\omega(x_0) \proj_l \left( z_\omega \right) = \proj_l \left( z_\omega(x_0) z_\omega \right),
\end{split}\]
where $\proj_l$ is the orthogonal projection onto $\spn \left\{ \phi_{0}, \ldots, \phi_{l-1} \right\}$. Now for $\mu$-a.e.\ $x_0 \in M$, $|z_\omega(x_0)|=1$, so $z = z_\omega(x_0) z_\omega$ is again a unit-norm Koopman eigenfunction. The second part of Claim~(ii) follows by taking the limit $l\to\infty$, since $ \proj_l $ converges pointwise to the identity in that limit. 

\paragraph{Proof of Claim (iii)} If $\omega\in \Omega$, then $\pi_\omega = \langle z_\omega, \cdot \rangle_\mu z_\omega$, where $z_\omega$ is a Koopman eigenfunction at eigenfrequency $\omega$, lying in the domain of the Nystr\"om operator $T$. Then again by Proposition \ref{prop:correl_A}, $\lim_{N\to\infty} \norm_{N,l}(f_{\omega}) = \sum_{j=0}^{l-1} \left| \langle z_{\omega}, \phi_j \rangle_{\mu} \right|^2/\lambda_{j}$, and because $ z_\omega \in D(T)$, $\lim_{l\to\infty} \lim_{N\to\infty} w_{N,l}(f_\omega) = \lVert T z_\omega \rVert^2_{\mathcal{H}} < \infty $. By multiplying by a phase factor, we can make $Tz_\omega(x_0) $ equal to 1, and thus $ T z_\omega$ equal to the unique RKHS extension $\bar f_\omega$ from Theorem~\ref{thm:A}, proving Claim~(iii). 

\paragraph{Proof of Claim (iv)} For any ordering $\omega_m$ of the Koopman eigenfrequencies, the corresponding eigenfunctions $\left( z_{\omega_m} \right)_{m\in\num}$ form an orthonormal sequence in $L^2(\mu)$, and therefore converge weakly to $0$. By Proposition \ref{prop:correl_A}, for $\mu$-a.e.\ $x_0\in M$,
\[\lim_{m\to\infty} \lim_{N\to\infty} \norm_{N,l}( f_{\omega_m}) = \lim_{m\to\infty} \lim_{N\to\infty} \sum_{j=0}^{l-1} \left| \langle f_{\omega_m}|X_N, \phi_{N,j} \rangle_{\mu_N} \right|^2 / \lambda_{N,j} = \lim_{m\to\infty} \sum_{j=0}^{l-1} \left| \langle f_{\omega_m}, \phi_{j} \rangle_{\mu} \right|^2 / \lambda_{j}. \]
Since $l<\infty$, the limit can be brought inside the sum, and by the weak convergence of the $ z_{\omega_m} $ to zero,
\[\lim_{m\to\infty} \lim_{N\to\infty} \norm_{N,l}( f_{\omega_m}) = \sum_{j=0}^{l-1} \lim_{m\to\infty} \left| \langle z_{\omega_m}, \phi_{j} \rangle_{\mu} \right|^2 / \lambda_{j} = 0,\]
proving Claim (iv), and concluding the proof of Theorem \ref{thm:D}. \qed

\section{Proof of Theorem \ref{thm:F}} \label{sect:proof_F}

Using Lemma~\ref{lemSpecConv}(iii), we can rewrite \eqref{eqWN} as
\begin{equation}\label{eqn:fgaer}
\norm_{N,l}(f_\omega) = \sum_{j=0}^{l-1} \left| \langle f_{\omega}, \varphi_{N,j} \rangle_{\mu_N} \right|^2 \leq \sum_{j=0}^{l-1} \left| \langle f_{\omega}, \tilde{\varphi}_{N,j} \rangle_{\mu_N} \right|^2 + \sum_{j=0}^{l-1} \left| \langle f_{\omega}, \tilde{\varphi}_{N,j} - \varphi_{N,j} \rangle_{\mu_N} \right|^2 .
\end{equation}
It is important to keep in mind that for any $g \in C^0(M)$, the inner product $ \langle f_\omega, g \rangle_{\mu_N} $ is a continuous function of the starting point $x_0$ of the orbit $ \mathcal{O} $. In particular, the term on the LHS of \eqref{eqn:fgaer}, and the two terms on the RHS, are all functions of $x_0$. 

Before inspecting the dependence of $w_{N,l}(f_\omega)$ on $\omega$, as estimated by~\eqref{eqn:fgaer}, we will use Lemma~\ref{lemSpecConv} to establish a bound on the terms involving the difference $\tilde{\varphi}_{N,j} - \varphi_{N,j}$, {valid for every starting point $x_0 $ in the full-measure set $X'$ from Lemma~\ref{lemSpecConv}, viz.
\[\begin{split}
\left| \langle f_{\omega}, \tilde{\varphi}_{N,j} - \varphi_{N,j} \rangle_{\mu_N} \right| &= \left| \frac{1}{N}\sum_{n=0}^{N-1} f_\omega(x_n) \left[ \tilde{\varphi}_{N,j}(x_n) - \varphi_{N,j}(x_n) \right] \right| \leq \frac{1}{N}\sum_{n=0}^{N-1} \left| f_\omega(x_n) \right| \left| \tilde{\varphi}_{N,j}(x_n) - \varphi_{N,j}(x_n) \right| \\
&= \frac{1}{N}\sum_{n=0}^{N-1} \left| \tilde{\varphi}_{N,j}(x_n) - \varphi_{N,j}(x_n) \right| \leq \left\| \tilde{\varphi}_{N,j} - \varphi_{N,j} \right\|_{C^0(X)}.
\end{split}\]
Using this result, and taking the supremum over $\omega \in \real$ in the inequality in \eqref{eqn:fgaer}, we obtain
\[\sup_{\omega\in\real \setminus\{0\} } \norm_{N,l}(f_\omega) \leq \sum_{j=0}^{l-1} \sup_{\omega\in\real \setminus\{0\} } \left| \langle f_{\omega}, \tilde{\varphi}_{N,j} \rangle_{\mu_N} \right|^2 + \sum_{j=0}^{l-1} \left\| \tilde{\varphi}_{N,j} - \varphi_{N,j} \right\|_{C^0(X)}^2 . \]
Thus, taking the $N\to\infty$ limit of the LHS in the above equation, and distributing the limit over the finite number of terms on the RHS, leads to the bound 
\[ \lim_{N\to\infty} \sup_{\omega\in\real \setminus\{0\} } \norm_{N,l}(f_\omega) \leq \sum_{j=0}^{l-1} \lim_{N\to\infty} \sup_{\omega\in\real \setminus\{0\} } \left| \langle f_{\omega}, \tilde{\varphi}_{N,j} \rangle_{\mu_N} \right|^2 + \sum_{j=0}^{l-1} \lim_{N\to\infty} \left\| \tilde{\varphi}_{N,j} - \varphi_{N,j} \right\|_{C^0(X)}^2. \]
By Lemma~\ref{lemSpecConv}, each of the $l$ terms in the second sum vanish for all $ x_0 \in X'$, and hence the inequality 
\[ \lim_{N\to\infty} \sup_{\omega\in\real \setminus\{0\} } \norm_{N,l}(f_\omega) \leq \sum_{j=0}^{l-1} \lim_{N\to\infty} \sup_{\omega\in\real \setminus\{0\} } \left| \langle f_{\omega}, \tilde{\varphi}_{N,j} \rangle_{\mu_N} (x_0) \right|^2, \]
holds for every $x_0\in X'$. Here we have used the notation $\langle f_\omega, \tilde{\varphi}_{N,j} \rangle_{\mu_N}(x_0)$ to make the dependence of the inner product $\langle f_\omega, \tilde{\varphi}_{N,j} \rangle_{\mu_N}$ on $x_0$ explicit. \blue{We have thus shown that for fixed $l\in\num$,
\[ \lim_{N\to\infty} \sup_{\omega\in\real \setminus\{0\} } \beta_{N,\omega}(x_0) \leq \sum_{j=0}^{l-1} \lim_{N\to\infty} \sup_{\omega\in\real \setminus\{0\} } \left| \langle f_{\omega}, \tilde{\varphi}_{N,j} \rangle_{\mu_N} (x_0) \right|^2, \quad \forall x_0\in X'. \] }
Since this bound holds for $x_0$ lying in a full-measure set $X'$ which is independent of $\omega$ and $N$, taking expectation with respect to $x_0$ over this set gives
\blue{ \[ \lim_{N\to\infty} \sup_{\omega\in\real \setminus\{0\} } \mathbb{E}_{\mu} \beta_{N,\omega} \leq \sum_{j=0}^{l-1} \lim_{N\to\infty} \sup_{\omega\in\real \setminus\{0\} } \mathbb{E}_{\mu} \left| \langle f_{\omega}, \tilde{\varphi}_{N,j} \rangle_{\mu_N} \right|^2. \] }
Since there are only finitely many ($l$) summands, it is sufficient to show that for each $0\leq j<l$,
\blue{ \[ \lim_{N\to\infty} \sup_{\omega\in\real \setminus\{0\} } \mathbb{E}_{\mu} \left| \langle f_{\omega}, \tilde{\varphi}_{N,j} \rangle_{\mu_N} \right|^2 = 0. \] }
%
Note that by definition (see Lemma~\ref{lemSpecConv}), $\tilde{\varphi}_{N,j}$ lies in the unit ball of the finite-dimensional subspace $W_j \subset L^2(\mu)$, for every $N\in\num$. It is therefore sufficient to show that for every $\phi\in L^2(\mu)$,
\begin{equation}\label{eqn:Fourier_uniform_decay}
\lim_{N\to\infty}\sup_{\omega\in\real \setminus\{0\} } \int_M \lvert \langle f_{\omega}, \phi \rangle_{\mu_N}(x_0) \rvert^2 \, d\mu(x_0) =0.
\end{equation}

To that end, let $\Fourier_{\omega,N}$ be the Fourier averaging operator on the space of complex-valued functions on $M$, defined for every $\omega\in\real$ and $N\in\num$ as 
\[
\Fourier_{\omega,N} := \frac{1}{N} \sum_{n=0}^{N-1} e^{-i\omega n}U^{n\,\Delta t}.
\]
Since $\Fourier_{\omega,N}$ is given by a finite linear combination of powers of $U^{\Delta t}$, it acts as an operator on $C^0(M)$, $C^0(X)$, and extends to an operator on the $L^p(\mu)$ spaces with $p\geq 1$. In particular, note that 
\[ \langle f_\omega, \phi \rangle_{\mu_N}(x_0) = \Fourier_{\omega,N} \phi(x_0), \quad
\int_X \left| \langle f_\omega, \phi \rangle_{\mu_N}(x_0) \right|^2 d\mu(x_0) = \left\| \Fourier_{\omega,N}\phi \right\|_{\mu}^2, \]
Thus, to prove the theorem, it is sufficient to show that
\begin{equation}\label{eqn:FFT_norm}
\lim_{N\to\infty}\sup_{\omega\in\real \setminus\{0\} } \left\| \Fourier_{\omega,N} \phi \right\|_{\mu}^2 =0, \quad \forall \phi\in L^2(\mu).
\end{equation}

We will now verify~\eqref{eqn:FFT_norm} by employing the spectral representation of unitary operators. In particular, let the circle $S^1$ be parameterized by the interval $[0,2\pi)$, with the ends identified. Let also $H_{\phi}$ be the cyclic subspace generated by $\phi$, i.e., the $L^2(\mu)$ closure of $ \spn \{ U^{n\,\Delta t} \phi : n\in\integer\} $. By Herglotz's theorem, there exists a Borel probability measure $\mu_{\phi}$ on $S^1$, such that, for every $n\in\integer$,
\[c_n := \langle U^n\phi, \phi \rangle_{\mu} = \int_{0}^{2\pi} e^{-i n \theta}d\mu_{\phi}(\theta).\]
As a result, the linear map $\Psi_{\phi}:H_{\phi}\to L^2(S^1,\mu_{\phi})$, defined uniquely through the requirement that $(\Psi_{\phi} U^n \phi )(\theta) = e^{i n \theta}$, for all $ n \in \integer $, is an isometry, and we have
\[ \left( \Psi_{\phi} \Fourier_{\omega,N} \phi \right)(\theta) = \frac{1}{N}\sum_{n=0}^{N-1} e^{-i\omega n} ( \Psi_{\phi} U^n \phi)(\theta) = \frac{1}{N}\sum_{n=0}^{N-1} e^{i(\theta-\omega)n} = \frac{1}{N}\frac{e^{i(\theta-\omega)N}-1}{e^{i(\theta-\omega)} -1}.\]
The term in the right-hand side of the last inequality can be succinctly expressed in terms of a function $\Sinc_N:\real\to\real$, which is very similar to the $N$-th Fourier cosine coefficient of the Bartlett window used in signal processing \citep[][pp.~62--64]{Bracewell2000}, namely, 
\[
\Sinc_N(u) := \left|\frac{\sin(N u/2)}{N\sin(u/2)} \right| = \left| \frac{1}{N}\frac{e^{iuN}-1}{e^{iu} -1} \right|, \quad
\Sinc_N(\theta-\omega) = \left| \frac{1}{N}\frac{e^{i(\theta-\omega)N}-1}{e^{i(\theta-\omega)} -1} \right| = \left| \left( \Psi_{\phi} \Fourier_{\omega,N} \phi \right)(\theta) \right|.\\
\]
Using this function, we obtain
%
\begin{displaymath}
\left\| \Fourier_{\omega,N} \phi \right\|_{\mu}^2 = \int_{0}^{2\pi} \left| \left( \Psi_{\phi} \Fourier_{\omega,N} \phi \right)(\theta) \right|^2 \, d\mu_{\phi}(\theta) = \int_{0}^{2\pi} \Sinc_N^2(\theta-\omega) \, d\mu_{\phi}(\theta).
\end{displaymath}
Therefore, 
\eqref{eqn:FFT_norm} will be proved if it can be shown that
\begin{equation}\label{eqn:Fourier_uniform_decay_variance}
\lim_{N\to\infty}\sup_{\omega\in\real \setminus\{0\} } \int_{0}^{2\pi} \Sinc_N^2(\theta-\omega) \, d\mu_{\phi}(\theta) =0.
\end{equation}
Fixing an arbitrary $\epsilon>0$ and $\omega\neq 0$, it will be shown that for $N$ sufficiently large, $\left| \int_{0}^{2\pi} \Sinc_N^2(\theta-\omega) \, d\mu_{\phi}(\theta) \right| <2\epsilon$. For that, observe that by assumption, the measure $\mu_{\phi}$ is absolutely continuous with respect to the Lebesgue measure $\mu_\text{Leb}$ restricted to $S^1 \setminus\{0\} $, and hence it has a density $\rho_{\phi} = d\mu_\phi/ d\mu_\text{Leb} \in L^1(S^1 \setminus\{0\} ,\mu_\text{Leb})$. Since $\rho_\phi$ is integrable with respect to Lebesgue measure, there exists $\delta>0$ such that for any Borel set $E\subset \real$ with $\mu_{\text{Leb}}(E)<\delta$, $\int_E \rho_\phi \, d\mu_\text{Leb} <\epsilon$. Let now $I_\omega \subset S^1$ be an interval of length $\delta$, centered at $\omega \neq 0$, and assume that $\delta$ is small enough so that $0\notin I_\omega$. Then 
\[ \int_{0}^{2\pi} \Sinc_N^2(\theta-\omega) \, d \mu_{\phi}(\theta) = \int_{I_\omega} \Sinc_N^2(\theta-\omega) \rho_{\phi}(\theta) \, d\theta + \int_{S^1\setminus I_\omega} \Sinc_N^2(\theta-\omega) \, d \mu_{\phi}(\theta), \]
and because $0\leq \Sinc^2_N(u) \leq 1$ for every $u \in \real$, the first integral in the RHS is less than $\epsilon$. To bound the second integral, note that for any $\delta>0$ and as $ N\to \infty$,
\[\sup_{u\in\real:\;\lvert u \rvert>\delta } \Sinc_N(u) = O(1/N), \]
which implies that the second integral is $O(N^{-2})$, and thus less than $\epsilon$ for sufficiently large $N$. This proves \eqref{eqn:Fourier_uniform_decay_variance}, which proves \eqref{eqn:Fourier_uniform_decay}, and thus Theorem \ref{thm:F}. \qed

\section{Numerical computation of RKHS norms} \label{sect:numerics}

When numerically implementing the results of Theorems \ref{thm:A} and \ref{thm:D} to find Koopman eigenfrequencies and eigenfunctions, one has to deal with two limitations, namely: (i) instead of having access to Fourier functions $f_\omega$ on a full orbit $\mathcal{O}$, one only has access to a finite trajectory $X_N \subset \mathcal{O}$; and (ii) among the potentially countably-infinite set of Koopman eigenfrequencies, one can practically only identify a set of candidate eigenfrequencies. Regarding (i), note that the Nystr\"om extension $h_N = T_N(f_\omega|X_N)$ is a continuous function on $X$ (or $M$), and can be calculated even at points $x$ lying outside the orbit. By Theorem \ref{thm:A}, if $\omega$ has a translate by $2\pi q / \Delta t$, $ q \in \mathbb{Z} $, lying in the frequency set $\Omega$ from~\eqref{eqn:def:Omega}, then $h_N$ converges in $\RKHS(X)$ to a Koopman eigenfunction, and for large-enough, finite $N$, $ h_N $ is a good approximation of that eigenfunction. The limitation pointed out in (ii) is alleviated from the fact that, by virtue of a group structure that Koopman eigenfrequencies and eigenfunctions possess, which is described below, estimates of any finite collection of them can be used to generate arbitrarily many estimates.

\paragraph{Group structure of Koopman point spectra} By definition of the Koopman operator, the product of any two Koopman eigenfunctions $ z_{\omega_1}, z_{\omega_2} \in L^\infty(\mu) $ corresponding to eigenfrequencies $ \omega_1, \omega_2$, respectively, is also an $L^\infty(\mu) $ eigenfunction corresponding to the eigenfrequency $\omega_1 + \omega_2 $. A countable (finite or infinite) collection of rationally independent eigenfrequencies $\omega_1,\omega_2,\ldots$ is said to be a generating set, if for any eigenfrequency $\omega$, there exists $q \in \num$ and integer coefficients $c_1, \ldots, c_q $ such that $\omega = \sum_{j=1}^q c_j \omega_{k_j}$ for some $1 \leq k_1 < k_2 < \cdots < k_q $. Moreover, if $z_{k_1 }, \ldots, z_{k_q}$ are Koopman eigenfunctions of unit $L^2(\mu)$ norm corresponding to $ \omega_{k_1 }, \ldots, \omega_{k_q}$, respectively, then $z =\prod_{j=1}^{q} z_{k_j}^{c_j}$ is a unit-norm Koopman eigenfunction corresponding to $\omega$. The set of all such eigenfunctions forms an orthonormal basis of the point spectrum subspace $\mathcal{D} $ from~\eqref{eqn:L2_decomp}. In many dynamical systems, such as Kolmogorov-Arnold-Moser (KAM) tori, quasiperiodic systems, limit cycles, and periodically driven chaotic systems, there is a finite generating set $\omega_1, \ldots, \omega_{q}$ for some minimal number $\NGenFreq$. In such systems, a minimal generating set of eigenfrequencies is not unique, but is always of size $\NGenFreq$. In particular, any set of $\NGenFreq$ rationally independent eigenfrequencies constitutes a minimal generating set.

\paragraph{Experimental setup} In the data-driven modeling scenario we wish to consider here, the underlying dynamical system is unknown, or inaccessible to direct observation. Instead, we assume we have access to a finite, time-ordered dataset $F(x_0), \ldots, F(x_{N-1}) $, consisting of the values $F(x_n)$ of an observation function, $ F: M \to Y $, on an (unknown) finite trajectory $X_N $ as in Section~\ref{sec:intro}. The observation function will be assumed to have the following properties.

\begin{Assumption}\label{assump:F} 
$F : M \to Y $ is a map taking values in a metric space $Y$, such that $F| X $ is injective and continuous.
\end{Assumption}

Hereafter, we will refer to $Y$ as the data space. As stated in Section~\ref{sec:intro}, unlike conventional DFT-based spectral estimation approaches (e.g.,~\eqref{eqn:FFT}), the RKHS-based techniques proposed here do not require $Y$ to have the structure of a linear space. Note also that the injectivity requirement on $F$ can be generically relaxed through the use of delay-coordinate maps \cite{SauerEtAl91}; we will discuss this point further below. 

\paragraph{Choice of kernel} The experimental setup described above places a restriction on the type of kernel $k : M \times M \to \cmplx$ employed, as it must be computable from the values of $F$ alone. That is, $ k $ must have the structure of a ``pullback kernel'', $k(x,y) := \kappa(F(x), F(y))$, where $ \kappa: Y \times Y \to \cmplx $ is a kernel on $ Y $ designed according to the requirements of the application at hand. Here, we require that Assumption~\ref{assump:A2} be satisfied, which implies that $ F $ be injective and continuous on $X$ (i.e., Assumption~\ref{assump:F} is satisfied), and that the restriction of $ \kappa $ on $ F(X) \times F(X) \subseteq Y \times Y$ be continuous and strictly positive-definite.

As a guideline for choosing $ \kappa $ so as to satisfy the strict positive-definiteness condition, we note that the reproducing kernel $ k : M \times M \to \cmplx$ of an RKHS on $ M $ is strictly positive-definite if and only if the kernel sections $k( x_1, \cdot ), \ldots, k( x_n, \cdot ) $ are linearly independent for all $ x_1, \ldots, x_n $ in $ M $. When one does not have a priori knowledge of the image $ F( X ) $ of the support of the invariant measure in data space, it is generally preferable to define $ k $ through a $ \kappa $ which is strictly positive-definite on the whole of $ Y $. For example, in the case $Y=\real^m$, the radial Gaussian kernels,
\begin{equation}
\label{eqKGauss}
\kappa( y_1, y_2 ) = \exp \left( - \frac{ d^2( y_1, y_2 ) }{ \epsilon } \right),
\end{equation} 
where $d : Y \times Y \to \real$ is the Euclidean metric and $ \epsilon $ a positive bandwidth parameter, are strictly positive definite \cite{Micchelli86}. On the other hand, the covariance kernel,
\begin{equation}
\kappa(y_1, y_2) = \langle y_1, y_2 \rangle_Y = \left[ d^2( y_1, y_2 ) - d^2( y_1, -y_2 ) \right] / 4, 
\label{eqKCov}
\end{equation}
does not lead to a strictly positive-definite kernel $ k $ on $ X $, as in this case $ k( x, \cdot ) $ depends linearly on $ F( x ) $. We will return to a discussion of the behavior of our methods implemented with covariance kernels and their relationship to DFT approaches in Section~\ref{sect:examples}. Additional examples of commonly used kernels in machine learning and signal processing can be found in \cite{Genton01}.


\paragraph{Markov normalization} Another option in kernel selection, which we will adopt in Section~\ref{sect:examples}, is to start from a sign-definite (i.e., strictly positive-valued), strictly positive-definite, continuous kernel $ k : M \times M \to \real $, such as the Gaussian kernel in~\eqref{eqKGauss}, and manipulate it to obtain a normalized kernel $ \hat p_N : M \times M \to \real $, also strictly positive and strictly positive-definite. Following \citep[][Section~4.3]{DasGiannakis_delay_Koop}, we define
\begin{equation} \label{eqn:def:Markov}
\hat{p}_N(x,y) := \frac{k(x,y)} {\hat{\sigma}_N(x) \hat{\sigma}_N(y) }, \quad \hat{\sigma}_N := \sqrt{\sigma_N \rho_N}, \quad \rho_N := K_N (1_M), \quad \sigma_N := K_N(1_M/\rho_N). 
\end{equation}
By compactness of $X$, the functions $\rho_N$ and $\sigma_N$ are strictly positive, and their restrictions on $X$ are continuous. This makes $\hat p_N $ strictly positive, strictly positive-definite, and continuous on $X \times X$. In addition, one can verify that $\hat p_N $ is related to a non-symmetric kernel $ p_N : M \times M \to \real$ via the transformation
\begin{displaymath}
\hat p_N(x,y) = \sqrt{\frac{\sigma_N(x)}{\rho_N(x)}} p_N(x,y) \sqrt{\frac{\rho_N(y)}{\sigma_N(y)}}, \quad p_N( x, y ) = \frac{ k(x, y ) }{ \sigma_N( x ) \rho_N(y )}, 
\end{displaymath}
where $ p_N $ has the Markov property with respect to $ \mu_N$, i.e., $\sum_{n=0}^{N-1} p_N(x,x_n) / N = 1$, for all $x \in M$. As a result, the integral operator $ \hat P_N : L^2(\mu_N) \to L^2(\mu_N)$ associated with $ \hat p_N $ is related to an ergodic Markov operator on the same space by a similarity transformation; in particular, the eigenvalues $ \lambda_{N,j} $ of $P_N $ admit the ordering $ 1 = \lambda_{N,0} > \lambda_{N,1} \geq \lambda_{N,2} \geq \cdots \searrow 0$. Although $ \hat{p}_N$ and $p_N$ depend on $N$, it can be shown \citep[][Section~4.3]{DasGiannakis_delay_Koop} that as $N\to \infty$, the functions $ \rho_N $ and $ \sigma_N $ converge in $C^0(X)$ norm, $\mu$-almost surely, to the analogous functions computed with respect to the invariant measure $\mu$ instead of the sampling measure $\mu_N$, i.e., $ \rho = K 1_M $ and $ \sigma = K( 1_M / \rho ) $, respectively. As a result. $ \hat p_N $ and $ p_N $ converge in $C^0(X) $ norm to 
\begin{displaymath}
\hat{p}(x,y) := \frac{k(x,y)} {\hat{\sigma}(x) \hat{\sigma}(y) }, \quad p( x, y ) = \frac{ k(x, y ) }{ \sigma( x ) \rho(y )}, 
\end{displaymath}
respectively, where $ \hat \sigma = \sqrt{\sigma \rho} $, and $ p $ is Markov with respect to $\mu$.

Kernel normalizations such as~\eqref{eqn:def:Markov} are widely employed in manifold learning applications \cite{CoifmanLafon06,BerrySauer16b}, as they provide increased flexibility to approximate geometrical operators (e.g., heat operators) from data compared to unnormalized kernels, while increasing robustness to variations in the sampling density of the data. For our purposes, in addition to increasing robustness, kernel normalization has the benefit of normalizing the eigenvalue spectrum $ \lambda_{N,j} $ employed in the numerical calculation of RKHS norms---this will facilitate to some extent the tuning of threshold parameters in the procedure for selecting candidate Koopman eigenfrequencies described below.
\paragraph{Kernels from delay-coordinate mapped data} The condition that the observation map $ F|X : X \to Y $ is injective may not be satisfied in a number of real-world applications. In such cases, it is possible to utilize the dynamical flow to construct an empirically accessible observation map $ F_Q : M \to Y^Q $, $ Q \in \num $, using the method of delay-coordinate maps, viz.
\begin{equation}
\label{eqFQ}F_Q(x) = \left( F(x), F(\Phi^{-\Delta t}x), \ldots, F(\Phi^{-(Q-1)\Delta t}x) \right).
\end{equation}
It can be shown \cite{SauerEtAl91} that under mild genericity assumptions on $F$ and $ \Phi^{\Delta t} $, there exists a $ Q_0 \in \num $ such that for all $ Q \geq Q_0 $, $ F_Q | X $ is a homeomorphism onto its image. If these conditions are met, the methods described in this paper can be applied using the data obtained from $ F_Q $ instead of $ F $. 

\paragraph{The algorithm} The following algorithm describes a numerical procedure to compute approximate Koopman eigenfrequencies and their corresponding eigenfunctions in $\RKHS$ based on values of the corresponding kernel $k$ on the product trajectory $ X_N \times X_N $. Four parameters need to be supplied as inputs, namely integer spectral resolution parameters $ l_0$, $ l_1 $ with $0\leq l_0\leq l_1<N$, and positive threshold parameters $\delta_0$, $\delta_1$. The roles of $(l_0,\delta_0)$ and $(l_1,\delta_1)$ are to provide practically computable proxies to the asymptotic criteria established in Theorems~\ref{thm:D} and~\ref{thm:A}, respectively. In particular, if a trial Fourier frequency $ \omega \in \real $ has no $ 2\pi q/\Delta t $ translates, $q \in \integer$, lying in the set of Koopman eigenfrequencies, then by Theorem~\ref{thm:D}(i), for any $ l_0 \in \num $ and $ \delta_0 >0 $, $\norm_{N,l_0}(f_\omega)$ should be smaller than $ \delta_0 $ for large-enough $ N$. This suggests that, at fixed $N $, and for a given choice of $(l_0,\delta_0)$, the set of Fourier frequencies for which $w_{N,l_0}(f_\omega) < \delta_0$ should be good candidates for frequencies exhibiting the asymptotic behavior in Theorem~\ref{thm:D}(i), and thus can be rejected from the set of trial Koopman eigenfrequencies. In general, decreasing $ \delta_0 $ at fixed $N$ and $ l_0 $ decreases the likelihood of false positives (i.e., false detection of frequencies which are not Koopman eigenfrequencies), but also decreases the likelihood of true positives (i.e., the likelihood of failure to detect true Koopman eigenfrequencies increases). Similarly, at fixed $N$ and $\delta_0$, the likelihoods of false positives and true positives both decrease by increasing the parameter $l_0$, which controls the dimension of the RKHS subspace $ \spn\{ \psi_0, \ldots, \psi_{l_0-1} \}$ in which we search for eigenfunctions. Having pruned an initial set of candidate eigenfrequencies via the procedure just described, we proceed by subjecting the remaining frequencies to an additional test based on Theorem~\ref{thm:A}(i). According to that theorem, if $\omega $ does not have $ 2 \pi q / \Delta t $ translates in the frequency set $ \Omega $ from~\eqref{eqn:def:Omega}, then for any $ \delta_1 > 0 $, the ratio 
\begin{equation}\label{eqn:def:ratio}
\ratio(\omega;l_0,l_1,N) := \frac{\norm_{N,l_1}(f_\omega)}{\norm_{N,l_0}(f_\omega)} -1 ,
\end{equation}
should exceed $ \delta_1 $ for fixed $l_0$ and large-enough $l_1$ and $N$. When working with a dataset of fixed size $N$, this suggests rejecting all candidate frequencies for which $ r(\omega;l_0,l_1,N) > \delta_1 $ for some choice of $l_1 > l_0 $. In this test, decreasing $ \delta_1$ at fixed $(N, l_0, l_1 )$, or increasing $l_1 $ at fixed $ ( N, l_0, \delta_1 ) $, decreases the risk of false positives, while decreasing the likelihood of true positives.

Algorithm \ref{alg:E} below summarizes a procedure for identifying candidate Koopman eigenfrequencies and their corresponding eigenfunctions in $\RKHS $ by sequential application of the two rejection criteria described above on a candidate frequency set $\Omega_N$. In practical applications, that set must be necessarily finite, and due to the Nyquist frequency limitations discussed in Section~\ref{sec:intro}, it can be chosen as a subset of $[-\pi/\Delta t, \pi / \Delta t]$. The algorithm is meant to be used in conjunction with Proposition~\ref{prop:norm_algo_B} for evaluating $w_{N,l}(f_\omega) $ for $ \Omega_N \ni \omega$ set to the standard DFT frequency grid. By convention, it outputs candidate eigenvalue--eigenfunction pairs in order of increasing roughness of the eigenfunctions, as measured by the RKHS norm.

\begin{algorithm}\label{alg:E} The algorithm assumes that there is an underlying measure-preserving, ergodic flow on $M$ satisfying Assumption~\ref{assump:A1}, and the kernel $k$ satisfies Assumption~\ref{assump:A2}.
\begin{itemize}
\item Input: sampling interval $\Delta t$; the values of a kernel $k$ on a trajectory $ \{x_0,\ldots,x_{N-1}\}$ of length $N$; thresholds $\delta_0, \delta_1>0$; integers $ l_0, l_1 $ such that $0\leq l_0\leq l_1< N$.
\item Output: A collection of approximate Koopman eigenpairs $( \omega_1, \bar f_{\omega_1} ), \ldots, ( \omega_m, \bar f_{\omega_m,} ) $, with $ \omega_j \in \real $, $ \bar f_{\omega_j} \in \RKHS $, and $ \lVert \bar f_{\omega_1} \rVert_{\mathcal{H}} \leq \cdots \leq \lVert \bar f_{\omega_m} \rVert_{\mathcal{H}}$.
\item Steps
\begin{enumerate}
\item Choose a subset $\Omega_N$ of frequencies contained in $[-\pi/\Delta t ,\pi / \Delta t]$. Calculate the quantities $\norm_{N,l}(f_{\omega})$ for $\omega\in\Omega_N$ and $l \in \{ l_0, l_1 \}$. See Proposition \ref{prop:norm_algo_B} \blue{for a particular choice of $\Omega_N$ which makes this calculation faster by utilizing the FFT.} 
\item Select the $\omega$ in $\Omega_N$ for which $\norm_{N,l_0}(f_\omega)> \delta_0$.
\item Of the remaining $\omega$, discard the ones for which the $\ratio(\omega;l_0,l_1,N)$ in \eqref{eqn:def:ratio} is greater than $\delta_1$. 
\item Collect the non-discarded frequencies $ \omega_j $ from Step~3, and compute the Nystr\"om extensions $ \bar f_{\omega_j } = T_N( f_{\omega_j}|X_N ) $ of the corresponding Fourier functions via~\eqref{eqNystromFourier}.
\end{enumerate}
\end{itemize}
\end{algorithm}

Note that if Algorithm~\ref{alg:E} returns a nonempty set of approximate eigenfrequencies and their corresponding eigenfunctions, then the group structure of Koopman point spectra described above can be employed to generate countably infinitely many (approximate) eigenfrequencies and eigenfunctions. In particular, if the point spectrum is finitely generated by $q$ eigenfrequencies and the algorithm produces approximations to $m \geq q $ rationally independent eigenfrequencies, then the full point spectrum of the Koopman group is effectively approximated. Of course, in a practical computational environment there is no way of rigorously verifying the success of this generative scheme, as it is not possible to decide whether any two candidate frequencies represented in floating-point arithmetic truly approximate rationally-independent eigenfrequencies. Nevertheless, the group structure of Koopman spectra allows one to use more stringent values of the spectral resolution and threshold parameters of the algorithm, since rejection of certain true eigenfrequencies/eigenfunctions (i.e., false negatives) can be be compensated by reconstruction using the group structure. Overall, even though the selection criteria in Algorithm~\ref{alg:E} have an element of subjectivity (as with many threshold-based techniques, including conventional DFT-based spectral estimation), and the risk of false positives cannot be completely eliminated, the efficacy of the procedure in robust Koopman spectral estimation is aided by (i) the use of two independent selection criteria based on Theorems~\ref{thm:A} and~\ref{thm:D}; and (ii) the group structure of the Koopman point spectra, allowing one to focus on detection of a limited number of generating eigenfrequencies and eigenfunctions.

\begin{rk*} According to Theorems~\ref{thm:A} and~\ref{thm:D}, the selected frequencies may not be true Koopman eigenfrequencies, being instead shifts of such frequencies by (unique) integer multiples of $2\pi / \Delta t$. While we do not pursue this option here, if two time series at rationally-independent sampling intervals $\Delta t$ are available, one can employ Corollary~\ref{corFreq} to eliminate this aliasing effect.
\end{rk*}

\paragraph{Candidate frequency selection} We choose the finite trial frequency set $ \Omega_N \subset [- \pi/\Delta t, \pi / \Delta t]$ as the standard DFT frequency grid, so that it both gets denser as $N\to \infty$, and also allows fast efficient computation through the use of fast Fourier transforms (FFTs). Specifically, for fixed $N$, assumed odd for simplicity, we define
\begin{equation}\label{eqn:def:omega_r}
\omega_{r} := 2\pi r/N \Delta t, \quad r \in \{ -(N-1)/2,\ldots,(N-1)/2\}.
\end{equation} 
Letting then $\DiscFour_{N} : \cmplx^N\to\cmplx^N$ be the discrete Fourier transform with $ \vec b = \DiscFour_N \vec a $, $ \vec a = ( a_0, \ldots, a_{N-1}) $, $ \vec b = ( - b_{-(N-1)/2}, \ldots, b_{(N-1)/2} ) $, and 
\[ b_j := \frac{1}{N}\sum_{n=0}^{N-1} e^{-2\pi i nj/N} a_n,\]
the following proposition shows how one can utilize the speed of the FFT to compute $\norm_{N,l}(f_{\omega_r})$ for the frequencies $\omega_r$ in \eqref{eqn:def:omega_r}.

\begin{proposition}\label{prop:norm_algo_B}
Let $\Phimatrix{N}$ be the $N\times N$ matrix whose $(n,j)$-th element is $\phi_{N,j}(x_n)$, and $\Lambda_N$ the diagonal matrix with $\lambda_{N,j}$ as the $j$-th diagonal entry. Let also $A_N : = \DiscFour_{N} \Lambda^{-1/2}_N \Phimatrix{N}$, where $\DiscFour_{N}$ operates columnwise. Then, $\norm_N(f_{\omega_r})$ is equal to the squared $\ell^2$ norm of the $r$-th row of $A_N$, indexed such that $ r \in \{ -(N-1)/2, \ldots, (N-1)/2 \} $. Moreover, $\norm_{N,l}(f_{\omega_r})$ is equal to the squared $\ell^2$ norm of the $r$-th row of $A_N$, truncated to the first $l$ entries.
\end{proposition}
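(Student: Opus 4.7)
The plan is to unpack the definition of $\norm_N(f_\omega)$ given in equation \eqref{eqWN}, specialize it to the DFT grid $\omega = \omega_r$, and recognize the resulting expression as entries of the matrix product defining $A_N$.

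First, I would recall from \eqref{eqWN} that
\[
\norm_N(f_\omega) = \sum_{j=0}^{N-1} \frac{\left|\langle f_\omega | X_N, \phi_{N,j}\rangle_{\mu_N}\right|^2}{\lambda_{N,j}},
\]
and expand the inner product explicitly using the normalized Euclidean product on $L^2(\mu_N)$ together with the defining formula $f_\omega(x_n) = e^{i\omega n\,\Delta t}$, giving
\[
\langle f_\omega | X_N, \phi_{N,j}\rangle_{\mu_N} = \frac{1}{N}\sum_{n=0}^{N-1} e^{-i\omega n\,\Delta t}\,\phi_{N,j}(x_n).
\]
Setting $\omega = \omega_r = 2\pi r/(N\,\Delta t)$ converts the exponential factor into $e^{-2\pi i rn/N}$, so that this inner product becomes exactly the $r$-th entry of the discrete Fourier transform applied to the column vector $(\phi_{N,j}(x_n))_{n=0}^{N-1}$, i.e., the $(r,j)$ entry of $\DiscFour_N \Phimatrix{N}$.

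Next, the scaling by $\lambda_{N,j}^{-1/2}$ in the definition of $A_N$ acts columnwise on $\Phimatrix{N}$, giving the $(r,j)$ entry of $A_N$ as $\lambda_{N,j}^{-1/2}\langle f_{\omega_r}|X_N,\phi_{N,j}\rangle_{\mu_N}$. Squaring the modulus and summing over $j\in\{0,\ldots,N-1\}$ then yields the squared $\ell^2$ norm of the $r$-th row of $A_N$, which matches the expression for $\norm_N(f_{\omega_r})$ term by term. The truncated statement follows by the same computation with the sum restricted to $j\in\{0,\ldots,l-1\}$, recovering the definition of $\norm_{N,l}(f_{\omega_r})$ from \eqref{eqn:trunc}.

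There is no genuine obstacle here: the proof is essentially bookkeeping in linear algebra, and the main point is to verify the ordering of the indices (the convention that the DFT output is indexed by $r\in\{-(N-1)/2,\ldots,(N-1)/2\}$) and to check that the diagonal scaling in the expression $\DiscFour_N\Lambda_N^{-1/2}\Phimatrix{N}$ acts on the eigenfunction index $j$ rather than the sample index $n$. Once these bookkeeping points are fixed, the equality of the squared row norm of $A_N$ with the sum in \eqref{eqWN} is immediate.
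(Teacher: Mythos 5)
Your proposal is correct and follows essentially the same route as the paper: unpack \eqref{eqWN}/\eqref{eqn:trunc}, recognize $\langle f_{\omega_r}|X_N,\phi_{N,j}\rangle_{\mu_N}$ at the DFT frequencies as the $r$-th DFT coefficient of the column $(\lambda_{N,j}^{-1/2}\phi_{N,j}(x_n))_n$ (up to a complex conjugation that is irrelevant after taking the modulus), and sum the squared moduli over $j$ (or over the first $l$ indices for the truncated case).
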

\begin{proof} 
It follows from~\eqref{eqn:trunc} that $ \norm_{N,l}(f_{\omega_r}) = \sum_{j=0}^{l-1} \left| \lambda_{N,j}^{-1/2} a_{r,N,j} \right|^2 $, where $ a_{r,N,j} = \langle \phi_{N,j},f_{\omega_r}|X_N \rangle_{\mu_N} $, and $ n \in \{ 0, \ldots, N - 1 \} $. The claim of the Proposition for $ w_{N}(f_{\omega_r}) $ follows from the fact that $\left( \lambda_{N,j}^{-1/2} a_{r,N,j}^* \right)_{r=-N/2}^{N/2}$ is the DFT of the sequence $\left( \lambda_{N,j}^{-1/2} \phi_{N,j}(x_n) \right)_{n=0}^{N-1}$, and the latter is the $j$-th column of $\Lambda^{-1/2}_N \Phimatrix{N}$. The claim for $\norm_{N,l}(f_{\omega_r})$ follows in an analogous manner.
\end{proof}

\paragraph{Parameter selection} We end this section with general guidelines for choosing the parameters in Algorithm~\ref{alg:E}. First, to choose $\delta_0$, note that among all vectors $ f \in L^2(\mu_N)$ with unit norm, the ones having Nystr\"om extensions with minimal $\RKHS$ norm lie entirely in the top eigenspace of $G_N$, corresponding to eigenvalue $\lambda_{N,0}$. Since $ \lVert f_\omega \rVert_{\mu_N} = 1$ for all Fourier functions $f_\omega$, and $w_{N,l_0}(f_\omega) $ is equal to a spectrally truncated squared RKHS norm, this leads to a characteristic scale $ 1 / \lambda_{N,0}$ for $\delta_0$. In other words, $\delta_0 $ is ``small'', and the selection criterion in Step~2 of the algorithm is stringent, if $\delta_0 \lambda_{N,0} \ll 1$, and one can select this parameter from the interval $ [ 0, \lambda_{N,0} ] $. Note that in the case of the normalized kernels $ \hat p_N $ in~\eqref{eqn:def:Markov}, $ \lambda_{N,0} = 1 $ for all $N \in \num$, leading to a universal characteristic scale for $\delta_0$ equal to 1. 

Next, observe that $r(\omega;l_0,l_1,N)$ is a lower bound for the ratio $ \eta $ between the squared norms of the components of $T_N f_\omega$ in the $\mathcal{H}$-subspaces $\spn\{\psi_{N,0},\ldots,\psi_{N,l_0-1} \}^\perp $ and $ \spn\{ \psi_{N,0},\ldots,\psi_{N,l_0-1} \}$; specifically, 
\[ \eta \geq \frac{w_{N,N}(f_\omega) - w_{N,l_0}(f_\omega) }{w_{N,l_0}(f_\omega)} = r(\omega;l_0,l_1,N) + \frac{w_{N,N}(f_\omega)-w_{N,l_1}(f_\omega)}{w_{N,l_0}(f_\omega)} \geq r(\omega;l_0,l_1,N). \] 
Thus, in Step~3 of Algorithm~\ref{alg:E}, we effectively select candidate frequencies $\omega$ for which the relative energy (squared norm) concentration of $T_N f_\omega$ between $ \spn \{ \psi_{N,0}, \ldots, \psi_{N,l_0-1} \} $ and its orthogonal complement is at least equal to $1/\delta_1$. This suggests that a natural scale for $\delta_1$ is equal to 1, so that $\delta_1 \ll 1 $ ($\delta_1 \gg 1$) corresponds to a highly stringent (relaxed) selection criterion. For example, if $ \delta_1 = 1 $, Step~3 selects frequencies whose corresponding RKHS-extended Fourier functions $T_N f_\omega$ have no greater energy in $\spn \{ \psi_{N,0}, \ldots, \psi_{N,l_0} \}^\perp $ than in $ \spn \{ \psi_{N,0},\ldots,\psi_{N,l_0} \} $.

Finally, while there is no universal choice for the scale of $l_0, l_1$, as a rough practical guideline, one typically works with $0 \ll l_0 \ll l_1 \ll N - 1$. Here, the requirement that $l_1 \ll N-1$ is motivated by the fact that the eigenfunctions $\phi_{N,j}$, and thus the squared RKHS norms $w_{N,l_1}(f_\omega)$, with $ j, l_1 \simeq N - 1$, generally exhibit large sampling errors (i.e., sensitivity to the particular trajectory $X_N$ sampled by the data). The requirements that $l_0 \gg 0 $ and $ l_1 \gg l_0 $ are meant to ensure that the selection criteria in Steps~2 and~3 have high discriminating power, i.e., ability for $w_{N,l_0}(f_\omega)$ and $r(\omega;l_0,l_1,N)$ to reach large values, respectively.

In general, in the absence of relevant prior knowledge about the dynamical system and/or the spectrum of $G_N$ that would enable the derivation of more precise guidelines, one can execute Algorithm~\ref{alg:E} for a range of parameter values in the intervals indicated above, and chose eigenfrequencies that are persistently selected over several parameter values. It is important to note that due to the two independent tests employed in Algorithm~\ref{alg:E}, typically there are several parameter choices leading to consistent results for the selected frequencies.


\section{Examples and discussion}\label{sect:examples}

In this section, we apply the methods described in Sections \ref{sect:intro}--\ref{sect:numerics} to ergodic dynamical systems with different types of spectra. The goal is to demonstrate that the results of Theorems \ref{thm:A} and \ref{thm:D}, as implemented through Algorithm~\ref{alg:E}, are effective in identifying Koopman eigenfunctions and eigenfrequencies. 

We consider the following three systems, whose spectra are respectively pure point, continuous (with a trivial eigenfrequency at zero), and mixed, respectively:
\begin{enumerate}
\item A linear quasiperiodic flow $ R_{\alpha_1,\alpha_2 } $ on $\TorusD{2}$, defined as 
\begin{equation}\label{eqn:2D_oscill}
d R_{\alpha_1,\alpha_2}^t(\theta)/dt = (\alpha_1,\alpha_2), \quad \theta = ( \theta_1, \theta_2 ) \in \mathbb{T}^2, \quad \alpha_1=1, \quad \alpha_2=\sqrt{2},
\end{equation}
and observed through the non-injective observation map $F:\TorusD{2}\to\real$ with
\begin{equation}\label{eqn:2D_oscill_obs}
F(\theta_1, \theta_2) = \sin(\theta_1)\cos(\theta_2).
\end{equation}
This system has a pure point Koopman spectrum, consisting of eigenfrequencies of the form $n_1 \alpha_1 + n_2 \alpha_2$ with $n_1,n_2\in\integer$. Because $\alpha_1$ and $ \alpha_2 $ are rationally independent, the set of eigenfrequencies lies dense in $\real$, which makes the problem of numerically distinguishing eigenfrequencies from non-eigenfrequencies non-trivial despite the simplicity of the underlying dynamics.
\item The Lorenz 63 (L63) flow \cite{Lorenz63}, $ \Phi^t_\text{l63} : \real^3 \to \real^3 $, generated by the $C^\infty$ vector field $ \vec V$ with components $ ( V^{(x)}, V^{(y)}, V^{(z)})$ at $(x,y,z)\in\real^3$ given by
\begin{equation}\label{eqn:l63}
V^{(x)} = \sigma(y-x), \quad V^{(y)} = x(\rho-z) -y, \quad V^{(z)} = xy-\beta z,
\end{equation}
where $ \beta = 8/3 $, $\rho = 28$, and $\sigma = 10 $. The system is sampled through the identity map $F:\real^3\to\real^3$, i.e.,
\begin{equation}\label{eqn:l63_obs}
F(x,y,z) := \left( x,y,z \right).
\end{equation}
The L63 flow is known to have a chaotic attractor $ X_\text{l63} \subset\real^3 $ with fractal dimension $\approx 2.06$ \cite{LorentzFract}, supporting a physical invariant measure \cite{Tucker99}, and is also known to be mixing \cite{LuzzattoEtAl05}. That is, there exist no nonzero Koopman eigenfrequencies for this system.
\item A Cartesian product $\Phi^t_\text{l63} \times R_\alpha^t $ of the L63 flow with a linear flow $ R_\alpha^t $ on $S^1$ defined by
\begin{equation}\label{eqn:1D_oscill}
d R_\alpha^t(\theta)/dt = \alpha, \quad \theta \in S^1, \quad \alpha=1.
\end{equation}
This system is observed through a non-invertible map $ F : \real^3\times S^1 \to \real^3 $, which combines the coordinates of the continuous-spectrum subsystem with the rotation, viz.
\begin{equation}\label{eqn:l63_skew_additive}
F(x,\theta) = x + c\left( \sin(\theta), \cos(2\theta), \sin(2\theta) \right), \quad x\in \real^3, \quad \theta \in S^1, \quad c=0.2.
\end{equation}
This system has a mixed spectrum, containing the discrete eigenfrequency spectrum $ \{ j \alpha: j \in \mathbb{ Z} \} $ of the rotation and the continuous spectrum of the L63 flow as subsets. Due to the smallness of the constant $c$ in~\eqref{eqn:l63_skew_additive}, the L63 signal dominates the contribution from the rotation in the observation map $F$.
\end{enumerate}

\paragraph{Methodology} The following steps describe sequentially the entire numerical procedure carried out. 
\begin{enumerate}
\item Numerical trajectories $x_0, x_1, \ldots, x_{N-1} $, with $x_n = \Phi^{n\Delta t}(x_0)$ of length $N$ are generated, using a sampling interval $\Delta t=0.01$ in all cases. In the L63 experiments, we let the system relax towards the attractor, and set $ x_0 $ to a state sampled after a long spinup time (4000 natural time units); that is, we formally assume that $ x_0 $ has converged to the ergodic attractor. We use the \texttt{ode45} solver of Matlab to compute the trajectories. For the three systems $R_{\alpha_1,\alpha_2}^t$, $\Phi_{\text{l63}}^t$, and $\Phi_{\text{l63}}^t \times R_{\omega}^t$, the sample numbers are $ N= \text{40,000} $, 60,000, and 70,000, and the initial points in state space are $x_0 = (0,0)$, $(0,1,1.05)$, and $(0,1,1.05,0)$, respectively.
\item The observation map $F$ described for each system is used to generate the respective time series $ F( x_0 ), F( x_1 ), \ldots, F( x_{N-1} ) $. This dataset forms the basis of all subsequent computations. We perform delay-coordinate maps to construct an injective observation map $F_Q$ from~\eqref{eqFQ}, using $Q=5$, $2$, and $10$ delays for the three systems, respectively.
\item We employ the normalized kernel $\hat p_N$ in~\eqref{eqn:def:Markov}, obtained from the Gaussian kernel in~\eqref{eqKGauss}. The Gaussian kernel bandwidth $ \epsilon $ is determined automatically via the procedure described in \cite{BerryHarlim16,GraphTomog,BerryEtAl15}, which yields $\epsilon= 0.0459$, 0.015, and 0.0804, for the three systems, respectively.
\item The eigenpairs $(\phi_{N,j}, \lambda_{N,j})$ are computed for $j \in \{ 0,\ldots,l_1 \} $ using Matlab's \texttt{eigs} iterative solver. 
\item The spectrally truncated squared RKHS norms $\norm_{N,l}(f_{\omega_r})$ are computed for $l = l_0, l_1$ and the DFT frequencies $ \omega_r $ from~\eqref{eqn:def:omega_r}. We have experimented with different values of the parameters $ l_0$, $l_1$, $\delta_0$ and $\delta_1$ of Algorithm~\ref{alg:E} in the ranges $l_0, l_1 \leq 2000 $ and $\delta_0, \delta_1 \leq 1 $. In what follows, we show results obtained with $ (l_0,l_1) = (100,1000)$, $(100,1000)$, and $(1240,1500)$ for the three systems, respectively. The values of $(\delta_0, \delta_1)$ equal $(0.1,1)$, $(1,1)$, and $(1,1)$ respectively.
\end{enumerate}

\paragraph{Results} Figures~\ref{fig:2D_oscill}--\ref{fig:l63_skew_additive} show results obtained via the procedure described above for the quasiperiodic rotation on $\mathbb{T}^2$, L63 system on $\real^3$, and the mixed-spectrum system on $\real^3\times S^1$, respectively. Additional plots for the mixed-spectrum system, including a comparison with DFT-based spectral estimation, are included in Fig.~\ref{fig:l63_skew_additive_FFT}. To interpret these results, recall that $\norm_{N}$ is the squared RKHS norm of $T_N(f_\omega|X_N)$, while $\norm_{N,l}$ is the squared RKHS norm of $T_N(f_\omega|X_N) $ projected to the subspace spanned by $\psi_{N,0}, \ldots, \psi_{N,l-1}$. That is, $l$ behaves like spectral resolution parameter. According to Theorem \ref{thm:D}, at fixed finite resolution $l=l_0$, $\norm_{N,l_0}(f_\omega)$ converges to $0$ if $\omega $ has no $2\pi q / \Delta t $ translates in $\Omega$ with $q \in \integer$, whereas Theorem~\ref{thm:A} states that at variable resolution $ l = N-1 $, $\norm_N( f_\omega) = \norm_{N,N-1}(f_\omega)$ diverges as $N\to\infty$. Since in practice we are not at a liberty to increase $N$ to test for the asymptotic behavior of $ w_{N,l_0}(f_\omega) $ and $ w_{N}(f_\omega) $, we take advantage of the different nature of these results to identify Koopman eigenfrequencies via a two-step approach based on Theorems~\ref{thm:D} and~\ref{thm:A} (Steps~2 and~3 in Algorithm~\ref{alg:E}, respectively).

\begin{figure}
\centering
\captionsetup{width=\linewidth}
\includegraphics[width=\textwidth]{\FigPath 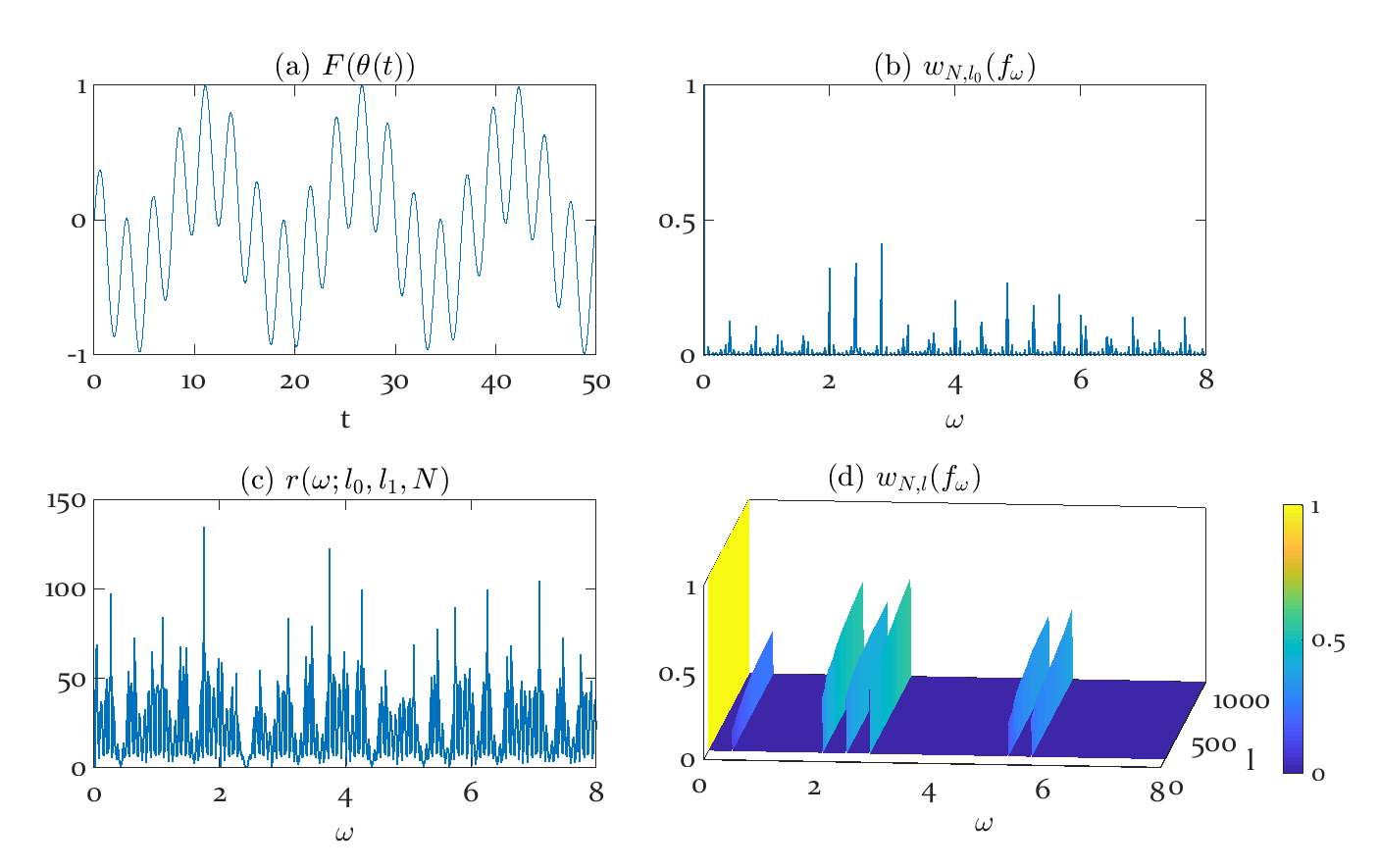}
\caption[results]{ Results of Algorithm \ref{alg:E} applied to the linear quasiperiodic flow \eqref{eqn:2D_oscill} on the 2-torus, using $N=\text{40,000}$ samples. The input is an observable time series, in this case obtained from the map $F$ in \eqref{eqn:2D_oscill_obs} and shown in (a). The goal is to identify Koopman eigenfrequencies of the system. Panel (b) shows the spectrally truncated squared RKHS norm $\norm_{N,l_0}(f_{\omega})$~\eqref{eqn:trunc} as a function of frequency $\omega$ for $l_0=100$. Panel (c) shows the ratio $\ratio(\omega;l_0,l_1,N)$~\eqref{eqn:def:ratio} as a function of $\omega$, computed for $l_1=1000$ and $l_0 $ as in (b). The candidate frequencies $\omega$ for which $\norm_{N,l_0}(f_{\omega}) < \delta_0 $ and $\ratio(\omega;l_0,l_1,N) > \delta_1 $ are discarded, using the threshold values $\delta_0=0.1$ and $\delta_1=1$. The selected frequencies include $2$ and $1+\sqrt{2}$, which are integer linear combinations of the two basic frequencies $1$ and $\sqrt{2}$ of the system. Panel (d) shows the dependence of $ w_{N,l}( f_{\omega} ) $ on $ l $ and $ \omega$ for these selected frequencies (in both vertical surface displacements and colors). Note that the squared norm $w_{N,l}(f_{\omega})$ does not change significantly as $l$ is increased from $l_0$ to $l_1$, and thus the results are robust with respect to the choice of thresholds $\delta_0, \delta_1$. } 
\label{fig:2D_oscill}
\end{figure}

\begin{figure}
\centering
\captionsetup{width=\linewidth} 
\includegraphics[width=\textwidth]{\FigPath 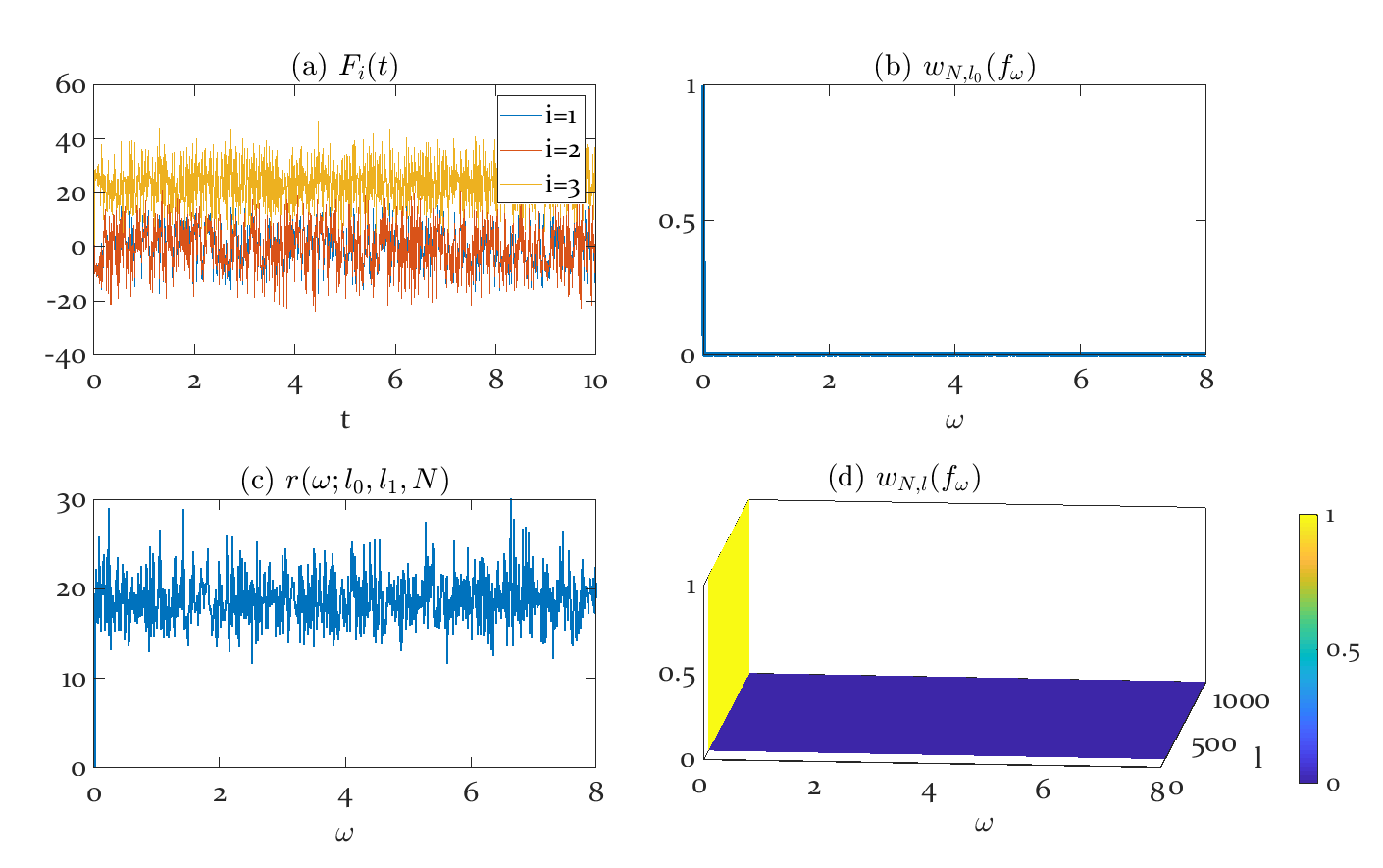}
\caption[results]{ As in Fig.~\ref{fig:2D_oscill}, but for the L63 system in~\eqref{eqn:l63} observed through the observation map in \eqref{eqn:l63_obs}. The components $F_i $ of the input time series are shown in Panel (a). Panel (b) shows that the squared norm $\norm_{N,l_0}(f_\omega)$, computed here for $ N = \text{60,000} $ and $l_0 = 100$, is uniformly small for $\omega\neq 0$, which is consistent with Theorem~\ref{thm:F}. Panel (c) shows the ratio $\ratio(\omega; l_0, l_1, N)$ for $l_1 = 1000$. The large numerical values for $\omega \neq 0 $ are consistent with the fact that for a system without nonzero eigenfrequencies, $\norm_{N,l}(f_\omega)$ grows without bound for all $\omega$ as $l,N\to\infty$ . Panel (d) shows the frequencies that are selected by Algorithm~\ref{alg:E}, with the choice of thresholds $ \delta_0 = \delta_1 = 1 $. The only selected frequency is $ \omega = 0 $, which corresponds to the constant eigenfunction. }%
\label{fig:l63}
\end{figure}

\begin{figure}
\centering
\captionsetup{width=\linewidth}\label{subfig:1b}
\includegraphics[width=\textwidth]{\FigPath 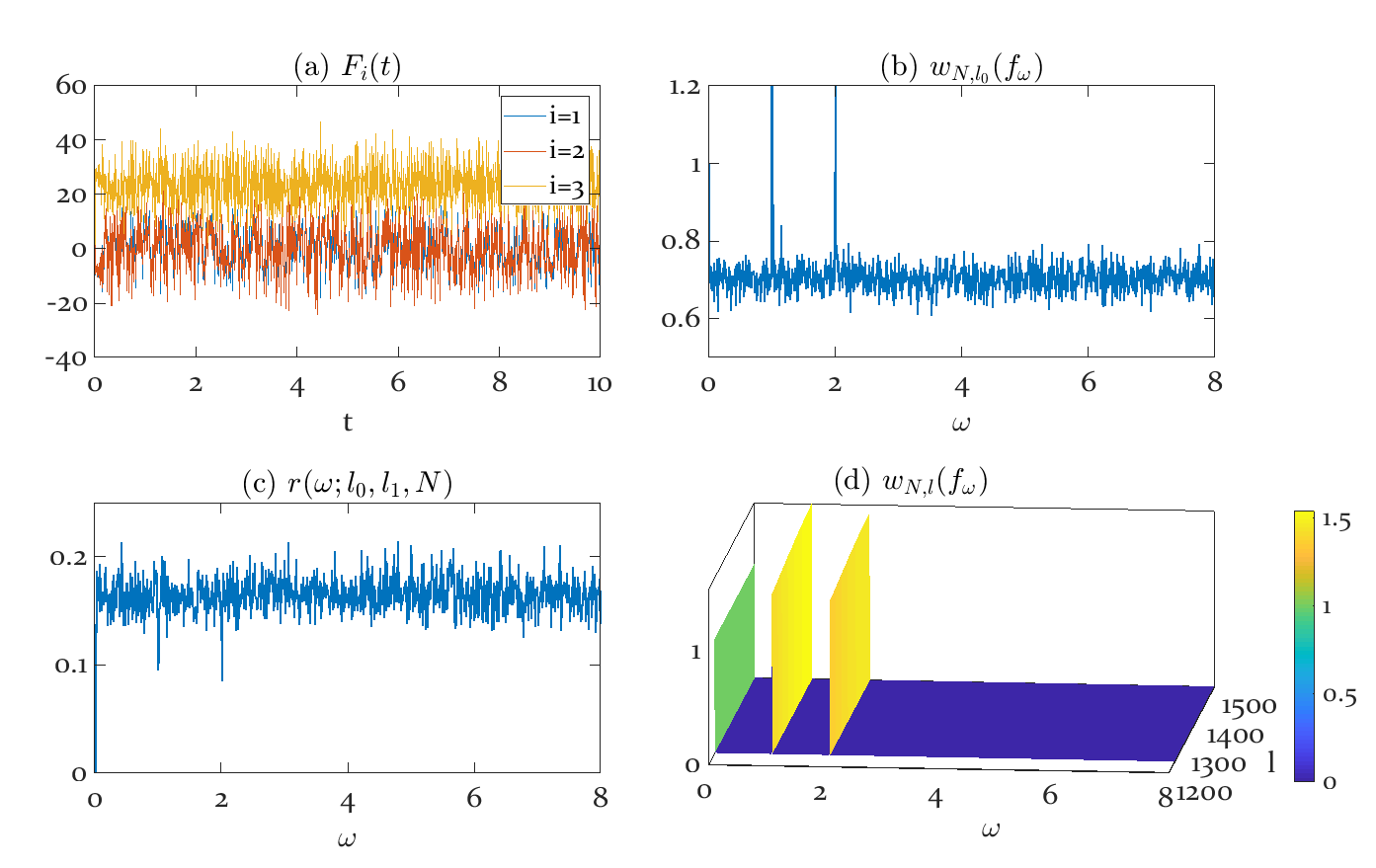}
\caption[results]{ As in Fig.~\ref{fig:2D_oscill}, but for the mixed-spectrum system on $\real^3\times S^1$ from~\eqref{eqn:1D_oscill}, observed via the map $ F $ from~\eqref{eqn:l63_skew_additive}. The number of samples is $N=\text{70,000}$, while Algorithm~\ref{alg:E} is executed using the parameters $ ( l_0, l_1 ) = ( 1240, 1500 ) $ and $ \delta_0 = \delta_1 = 1 $. Panels (a), (b) are reproduced from Fig.~4 for convenience. The Koopman eigenfrequencies identified by the algorithm in (d) are $ \omega = 0 $, 1, and 2, consistent with the discrete spectrum of the rotation $ R^t_\alpha $ with $ \alpha = 1 $.}
\label{fig:l63_skew_additive}
\end{figure}

As can be seen in Figs.~\ref{fig:2D_oscill}--\ref{fig:l63_skew_additive}(d), the method leads to accurate identification of Koopman eigenfrequencies in all three systems studied. In the case of the quasiperiodic rotation on $\mathbb{T}^2$ (Fig.~\ref{fig:2D_oscill}) and the mixed-spectrum system on $\real^3\times S^1 $ (Fig.~\ref{fig:l63_skew_additive}), the number of eigenfrequencies identified is sufficient to generate the full point spectrum via linear combinations. Specifically, in Fig.~\ref{fig:2D_oscill}(d) we find the frequencies 0.419, 2.0, 2.419, 2.827, 5.246, and 5.655, which agree with the theoretically expected eigenfrequencies $\sqrt{2}-1$, $2$, $\sqrt{2}+1$, $2\sqrt{2}$, $3\sqrt{2}-1$, $3\sqrt{2}+1$, and $4\sqrt{2}$ respectively, of the $\mathbb{T}^2$ rotation to within two significant digits. Moreover, in Fig.~\ref{fig:l63_skew_additive}(d), we recover the two eigenfrequencies of the circle rotation, 1 and 2, present in the observation map. In the case of the L63 flow (Fig.~\ref{fig:l63}), the method only identities the trivial (zero) eigenfrequency, consistent with the fact that this system is mixing and its associated Koopman group on $L^2(\mu)$ does not have nonconstant eigenfunctions. In fact, if the L63 flow is assumed to have a Lebesgue absolutely continuous spectrum (which, to our knowledge, has not been shown), then Fig.~\ref{fig:l63}(b) is consistent with Theorem~\ref{thm:F}, according to which, for $\mu$-a.e.~$x_0$ and fixed $l$, the squared norms $\norm_{N,l}(f_\omega)$ converge to $0$ uniformly over $\omega\neq 0$. 

Note that the frequencies identified by Algorithm~\ref{alg:E} are fairly insensitive to the input parameters $ l_0$, $l_1$, $\delta_0 $, and $\delta_1 $, and the two-step approach for selecting eigenfrequencies contributes at least partly to that robustness. In particular, recall that for the Markov kernels in~\eqref{eqn:def:Markov}, a characteristic scale for $\delta_0$ is equal to 1, so that $\delta_0 \ll 1 $ would correspond to a ``stringent'' test in Step~2 of the Algorithm. For the L63 system in Fig.~\ref{fig:l63}, Step~2 (which uses $\delta_0 = 1$) is not stringent as it leads to no rejections of candidate eigenfrequencies. We chose $ \delta_1 = 1$ in Step 3, which corresponds to the requirement of selecting frequencies whose corresponding RKHS-extended Fourier functions have at least half of their squared norm concentrated on the subspace spanned by $ \psi_{N,0}, \ldots, \psi_{N,l_0}$. In spite of the relatively modest strength of this requirement, all nonzero frequencies are discarded as non-eigenfrequencies, as suggested by Theorem~\ref{thm:A}(i). Conversely, in the case of the mixed-spectrum system, Step~2 with $\delta_0 = 1$ accurately identifies two true eigenfrequencies (Fig.~\ref{fig:l63_skew_additive}(b)), but Step~3 (Fig.~\ref{fig:l63_skew_additive}(c)), which uses $ \delta_1= 1$ is superfluous. Moreover, the results of Step~3 remain unchanged by varying $ l_1 $ in the interval $800\lesssim l_1 \lesssim 1100$ (Fig.~\ref{fig:2D_oscill}(d)). Overall, these results demonstrate that there is a degree of redundancy between the two tests based on Theorems~\ref{thm:A} and~\ref{thm:D}, contributing to the overall robustness of the procedure. As shown in Fig.~\ref{fig:2D_oscill}(d), the selected frequencies for the torus rotation are similarly robust to changes of parameter values.

\paragraph{Covariance kernels} One of the requirements for Theorems \ref{thm:A}--\ref{thm:F} to hold is that the kernel is strictly positive-definite. As stated in Section~\ref{sect:numerics}, this requirement is not satisfied when using a covariance kernel $k(x,x') = \langle F(x), F(x') \rangle_{Y}$ from~\eqref{eqKCov} associated with an observation map $ F : M \to Y $ taking values in $ \real^m $. In particular, the rank of the kernel integral operators $ K_N : L^2(\mu_N) \to \RKHS $ associated with such a kernel (and also the rank of $K : L^2(\mu) \to \RKHS $) is at most $m$, meaning that $f_\omega|X_N \in L^2(\mu_N) $ may fail to have a Nystr\"om extension in $ \RKHS$ (for the domain $D(T_N) $ of the extension operator $ T_N $ will be a strict subspace of $ L^2(\mu_N) $ for $ N > \rank K_N $). In effect, a covariance kernel on a finite-dimensional data space significantly limits the richness of observables in the corresponding RKHS, thus decreasing the likelihood that Koopman eigenfunctions can be found in this space.

\paragraph{Comparison with harmonic averaging} To compare our RKHS approach (with $k$ set to a covariance kernel as above) with conventional harmonic averaging, observe that even if $ f_\omega|X_N $ does not lie in $D(T_N)$ , it is still possible to compute the $\RKHS$ extension of the orthogonal projection $g_{\omega,N} \in D(T_N)$ of $f_\omega|X_N$ onto the domain of the Nystr\"om extension operator $T_N$, and evaluate the squared RKHS norm 
\begin{equation}\label{eqRKHS_Cov}
\lVert T_N g_{\omega,N} \rVert^2_{\RKHS} = \norm_{N,l_N}(f_\omega) = \sum_{j=0}^{l_N-1} \left| \langle \phi_{N,j}, f_{\omega}|X_N \rangle_{\mu_N} \right|^2/\lambda_{N,j},
\end{equation}
where $l_N = \rank K_N \leq m $. Note also that the kernel integral operator $ G_N = K_N^* K_N : L^2(\mu_N) \to L^2(\mu_N) $ associated with the covariance kernel takes the form 
\begin{equation}\label{eqGAN}
G_N = A_N^* A_N, 
\end{equation}
where $ A_N : L^2( \mu_N) \to \real^m $ is the rank-$l_N$ operator acting on $f \in L^2(\mu_N) $ by component-wise integration against the observation map, viz.
\begin{equation}\label{eqAN}
A_N f = \int_M F(x) f(x) \, d\mu_N(x).
\end{equation}
It then follows from~\eqref{eqGAN} that the $L^2(\mu_N) $ basis vectors $\phi_{N,0}, \ldots, \phi_{N,l_N-1}$ are also right singular vectors of $ A_N $ corresponding to the (strictly positive) singular values $\lambda_{N,0}^{1/2}, \ldots, \lambda_{N,l_N-1}^{1/2}$, respectively. This property, in conjunction with~\eqref{eqAN}, leads to
\[ F(x_n) = \sum_{j=0}^{l_N-1} e_{N,j} \lambda_{N,j}^{1/2} \phi_{N,j}(x_n), \quad \forall x_n \in X_N, \]
where $e_{N,0}, \ldots, e_{N,l_N-1}$ are orthonormal left singular vectors of $A_N $ in $\real^m$. Inserting this representation of $F(x_n) $ in the harmonic averaging formula in~\eqref{eqn:FFT}, we obtain
\begin{equation}\label{eqHarmAv}
\left\| \Fourier_{\omega,N} F \right\|_{\real^m}^2 = \sum_{j=0}^{l_N-1} \left| \langle \phi_{N,j}, f_{\omega}|X_N \rangle_{\mu_N} \right|^2 \lambda_{N,j} .
\end{equation}

A comparison of~\eqref{eqRKHS_Cov} and~\eqref{eqHarmAv} then shows that the power spectral density $\left\| \Fourier_{\omega,N}F \right\|_{\real^m}^2$ from harmonic averaging has a structurally similar representation to the squared RKHS norm $\lVert T_Ng_{\omega,N} \rVert^2_{\RKHS}$ in terms of the $(\lambda_{N,j}, \phi_{N,j})$ eigenpairs, apart from the fact that the former involves multiplication by $\lambda_{N,j} $ (thus being dominated by the projections of $f_\omega|X_N$ along the most energetic signal components), whereas the latter involves division by $\lambda_{N,j}$ (thus being dominated by the projections of $f_\omega|X_N$ along the most irregular signal components in the sense of the covariance kernel). In applications where the ratio $\lambda_{N,0} / \lambda_{N,l_N-1}$ is not too large, $\left\| \Fourier_{\omega,N}(F) \right\|_{\real^\dimObs}^2$ and $ w_{N,l_N}(f_\omega)$ will thus be comparable. If, however, $\lambda_{N,0} / \lambda_{N,l_N-1}$ is large (as will typically the case in high data space dimensions), then, depending on the frequency $ \omega $, the two quantities can be vastly different. As illustrated in Fig.~\ref{fig:l63_skew_additive_FFT}(d), the limitation $ l_N \leq m $ may be inadequate for estimating eigenfrequencies from observables of mixed-spectrum systems dominated by the continuous spectrum.

\blue{There is also a simple example of an injective embedding $F:M\to Y$ for which a direct DFT fails to yield all the eigenfrequencies. Consider the case where $X = M = S^1$, and $\Phi^t$ is the rotation by a constant velocity $\omega$. Take $Y=\mathbb{R}^4$, and let $F:M\to Y$ be the map $F(x) = \left( \sin(2x), \cos(2x), \sin(3x), \cos(3x) \right)$. Then, $F$ is an injective embedding of $X$ into $Y$, but a simple Fourier analysis of the components of $F$ will only yield the two frequencies $2\omega$ and $3\omega$. Of course, linear integer combinations of these two yield other multiples of $\omega$ in a post processing step, but this would lead to increased error sensitivity in calculating the generating frequencies. On the other hand, the RKHS-based approach here is not limited by the dimensionality of $F$, but depends on the choice of the spectral resolution parameter $l$.}

An additional consideration that should be kept in mind when interpreting the relationship between harmonic averaging and the RKHS-based approach is that for a fixed finite spectral resolution $l \leq m $, Theorem \ref{thm:D} shows convergence results similar to harmonic averaging (that is, if either of $ \lVert \mathcal{ F }_{\omega,N} \rVert^2_{\real^m} $ or $ w_{N,l}( f_\omega ) $ do not converge to zero as $ N \to \infty $, then $ \omega $ is an eigenfrequency), while taking $l=N$, Theorem \ref{thm:A} shows convergence properties of a fundamentally different nature. In effect, Theorem~\ref{thm:A} states that if the projection of $ f_\omega | X_N $ onto the eigenspaces of $ G_N $ corresponding to $ \RKHS $ subspaces of low regularity decays rapidly-enough as $N $ increases, then $ \omega $ is an eigenfrequency. In order for this result to hold, the RKHS $\RKHS$ must lie dense in $ L^2(\mu) $ (which implies that the rank of $ G_N $ increases without bound as $ N \to \infty $, and its smallest eigenvalue $ \lambda_{N,N} $ is strictly positive and converges to zero), and this will not occur with non-positive-definite kernels such as covariance kernels associated with data in $\real^m$. Nevertheless, it is possible that for sufficiently large $ l_N = \rank G_N $ and eigenvalue ratio $ \lambda_{N,0}/ \lambda_{N,l_{N-1}}$ the squared RKHS norms $ \lVert T_N g_{\omega,N} \rVert^2_{\RKHS}$ from~\eqref{eqRKHS_Cov} may approximate the behavior established in Theorem~\ref{thm:A}.

\paragraph{Summary} By working with strictly positive-definite kernels, the RKHS approach described in this paper provides two distinct criteria to identify Koopman eigenfrequencies. The truncated squared RKHS norm $\norm_{N,l}$ in \eqref{eqn:trunc} effectively computes harmonic averages of $l$ observables $\phi_{N,0},\ldots,\phi_{N,l}$, with an important weighting by the inverse of the corresponding eigenvalues $ \lambda_{N,j}$, and as $ N $ grows, this collection provides infinitely many observables to carry out harmonic averaging on. This results in estimates of the RKHS norms of candidate Koopman eigenfunctions, which is a measure of their regularity and an important criterion in identifying eigenfrequencies. In addition, the method computes approximations of everywhere defined (as opposed to $\mu$-a.e.\ defined) Koopman eigenfunctions $ T_N( f_\omega | X_N ) \in \RKHS$, which can be evaluated at arbitrary points in $M$, and converge in $L^2(\mu)$ norm as $N\to\infty$ (Theorem~\ref{thm:D}(ii)). For Koopman eigenfunctions in $L^2(\mu)$ with representatives in $\mathcal{H}$, the convergence was shown to take place in the stronger, RKHS norm (Theorem~\ref{thm:A}), which implies uniform convergence on the support of the invariant measure.

\paragraph{Acknowledgments} Dimitrios Giannakis received support from ONR YIP grant N00014-16-1-2649, NSF grant DMS-1521775, and DARPA grant HR0011-16-C-0116. Suddhasattwa Das is supported as a postdoctoral research fellow from the first grant. The authors are \blue{ also grateful to Corbinian Schlosser for his insightful feedback, and three anonymous referees for making a number of technical suggestions that have helped us improve the paper.}



\bibliography{References}
\end{document}